\documentclass{article}
\usepackage[utf8]{inputenc}
\usepackage[margin=1 in]{geometry}
\usepackage{amsmath}
\usepackage{amsfonts}
\usepackage{mathtools}
\usepackage{amssymb}
\usepackage{amsthm}
\usepackage{mathrsfs}
\usepackage{tikz-cd}
\usepackage{enumitem}
\usepackage{youngtab}
\usepackage{lmodern}
\usepackage{hyperref}
\usepackage{todonotes}
\usepackage{circledsteps}

\usepackage{biblatex}
\addbibresource{revised2.bib}

\newtheorem{theorem}{Theorem}
\numberwithin{theorem}{section}
\newtheorem{lemma}[theorem]{Lemma}
\newtheorem{proposition}[theorem]{Proposition}
\newtheorem{corollary}[theorem]{Corollary}
\newtheorem{claim}{Claim}
\newtheorem{conjecture}[theorem]{Conjecture}
\newtheorem{question}[theorem]{Question}
\newtheorem{example}{Example}[theorem]

\theoremstyle{definition}
\newtheorem{definition}[theorem]{Definition}

\theoremstyle{remark}
\newtheorem{remark}[theorem]{Remark}

\newcommand{\mb}{\mathbb}
\newcommand{\tb}{\textbf}
\newcommand{\tn}{\textnormal}
\newcommand{\id}{\tn{id}}

\newcommand{\E}{\mb{E}}

\newcommand{\bs}{\backslash}
\newcommand{\se}{\subseteq}
\newcommand{\tc}{\textcolor}

\setcounter{MaxMatrixCols}{20}

\title{Positivity of permutation pattern character polynomials}
\author{Christian Gaetz\footnote{C.G. is supported by a Klarman Postdoctoral Fellowship at Cornell University, and was also supported by an NSF Postdoctoral Research Fellowship under grant no. DMS-2103121.} \\ Cornell University \\ \href{mailto:crgaetz@gmail.com}{crgaetz@gmail.com}  \and 
Laura Pierson \\ Harvard University \\ \href{mailto:lcpierson73@gmail.com}{lcpierson73@gmail.com}}
\date{\today}

\begin{document}

\maketitle

\begin{abstract}
Let $N_\sigma(\pi)$ denote the number of occurrences of a permutation pattern $\sigma\in S_k$ in a permutation $\pi\in S_n$. Gaetz and Ryba \cite{GaRy21} showed using partition algebras that the $d$-th moment $M_{\sigma,d,n}(\pi)$ of $N_\sigma$ on the conjugacy class of $\pi$ is given by a polynomial in $n,m_1,\dots,m_{dk}$, where $m_i$ denotes the number of $i$-cycles of $\pi$. They also showed that the coefficient $\langle \chi^{\lambda[n]}, M_{\sigma,d,n}\rangle$ agrees with a polynomial $a_{\sigma,d}^\lambda(n)$ in $n$. This work is motivated by the conjecture that when $\sigma=\id_k$ is the identity permutation, all of these coefficients are nonnegative. We directly compute closed forms for the polynomials $a_{\id_k}^{\lambda}(n)$ in the cases $\lambda=(1),(1,1),$ and $(2)$, and use this to verify the positivity conjecture for those cases by showing that the polynomials are real-rooted with all roots less than $k$. We also study the case $a_{\sigma}^{(1)}(n)$, for which we give a formula for the polynomials and their leading coefficients. 
\end{abstract}

\section{Introduction}

\subsection{Permutation pattern polynomials}
Given two permutations $\pi=\pi(1)\dots\pi(n)$ in the symmetric group $S_n$ and $\sigma = \sigma(1)\dots\sigma(k) \in S_k$, we say that $\pi$ \emph{contains the pattern} $\sigma$ if there is a sequence $i_1,\dots,i_k\in [n]$ with $i_1<\dots<i_k$ such that $\pi(i_1),\dots,\pi(i_k)$ are ordered according to $\sigma,$ i.e. $\pi(i_a)>\pi(i_b)$ if and only if $\sigma(a)>\sigma(b)$. Such a sequence $(i_1,\dots,i_k)$ is an \emph{occurrence} of $\sigma$ in $\pi$. 

Let $N_\sigma(\pi)$ denote the number of occurrences of $\sigma$ in $\pi$. If $N_{\sigma}(\pi)=0$, then $\pi$ is said to \emph{avoid} $\sigma$.  Beginning with the work of Knuth \cite{Knuth}, the study of permutation patterns and pattern avoidance has grown into a very active subfield of combinatorics (see \cite{Bona-textbook}). Permutation patterns have been found to play an important role in many settings where algebraic or geometric objects are indexed by permutations, being ubiquitous in the study of Schubert varieties, Bruhat order, and Kazhdan--Lusztig polynomials \cite{Abe-Billey}.  

The distribution of permutation pattern occurrences has been studied by several authors. For example, it was shown by Janson--Nakamura--Zeilberger \cite{Janson-Nakamura-Zeilberger} that the distribution of $N_{\sigma}$ on uniformly random permutations from $S_n$ is asymptotically normal, and by Zeilberger that the moments of this distribution are given by polynomials in $n$ \cite{Zeil}. Zeilberger also used Maple code to compute a number of these polynomials. 

More recently, efforts have been made to understand the distribution of pattern occurrences on conjugacy classes in $S_n$ using the character theory of the symmetric group. Hultman \cite{Hultman} and Gill \cite{Gill} considered the mean of $N_\sigma$ on conjugacy classes for the cases $k=2$ and $k=3$, respectively. The first author and Ryba \cite{GaRy21} used a new approach involving partition algebras to prove that all moments of $N_{\sigma}$ on conjugacy classes are polynomials in $n,m_1,\dots,m_{dk}$ (cf. Theorem~\ref{thm:M_poly}), and concluded from this that the supports of these characters stabilize as $n \to \infty$, with the coefficients of irreducible characters given by certain polynomials $a_{\sigma,d}^{\lambda}(n)$ which are the main object of study in this paper (cf. Theorem~\ref{thm:gaetz-ryba-coefficients}). 

\begin{definition}
Following the notation of \cite{GaRy21}, let $M_{\sigma,d,n}(\pi)$ be the $d$-th moment of $N_\sigma$ on the conjugacy class $C_\pi$ containing $\pi$, namely, $$M_{\sigma,d,n}(\pi) = \frac{1}{|C_\pi|}\sum_{\pi'\in C_\pi} N_\sigma^d(\pi').$$ More generally, given $d$ patterns $\sigma_1,\dots,\sigma_d$, with $\sigma_i\in S_{k_i}$, let $M_{\sigma_1,\dots,\sigma_d,n}(\pi)$ be the expected value of the product $N_{\sigma_1}\dots N_{\sigma_d}$ on the conjugacy class of $\pi$, that is, $$M_{\sigma_1,\dots,\sigma_d,n}(\pi) = \frac{1}{|C_\pi|}\sum_{\pi'\in C_\pi}N_{\sigma_1}(\pi')\dots N_{\sigma_d}(\pi').$$
\end{definition}

Since $M_{\sigma_1,\dots,\sigma_d,n}$ is a class function on $S_n$, we can expand it in the basis of irreducible symmetric group characters $\chi^{\lambda}$, where $\lambda$ is a partition of $n$. For a partition $\lambda=(\lambda_1 \geq \dots \geq \lambda_i)$ with $|\lambda|<n$ we use $\lambda[n]$ to denote the partition $(n-|\lambda|,\lambda_1,\dots,\lambda_i)$ of $n$, when this is well-defined. Theorems~\ref{thm:M_poly} and \ref{thm:gaetz-ryba-coefficients} below are extensions of the main theorem of \cite{GaRy21} and will be proven in Appendix~\ref{sec:poly_proofs}.

\begin{theorem}
[cf. Gaetz-Ryba \cite{GaRy21}, Theorem 1.1(a)]\label{thm:M_poly}
Given any permutation patterns $\sigma_1,\dots,\sigma_d$ (not necessarily distinct and not necessarily the same size) with $\sigma_i\in S_{k_i},$ $M_{\sigma_1,\dots,\sigma_d,n}$ is a polynomial in the variables $n,m_1,\dots,m_{k_1+\dots+k_d}$ of degree at most $k_1+\dots+k_d$, where $n$ has degree 1 and $m_i$ has degree $i$.
\end{theorem}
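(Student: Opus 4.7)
My plan is to extend the partition-algebra argument of Gaetz--Ryba \cite{GaRy21} to the multi-size setting. Let $K := k_1+\cdots+k_d$ and let $V = \mathbb{C}^n$ be the defining permutation representation of $S_n$. The original argument works on $V^{\otimes dk}$; here we work on $V^{\otimes K}$.

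The first step is to realize each $N_{\sigma_j}(\pi)$ as a trace: there is an operator $T_{\sigma_j}\colon V^{\otimes k_j}\to V^{\otimes k_j}$ such that $N_{\sigma_j}(\pi) = \text{tr}(\pi^{\otimes k_j} T_{\sigma_j})$, built by summing rank-one operators indexed by ordered position tuples $(i_1<\cdots<i_{k_j})$ and image tuples $(j_1,\ldots,j_{k_j})$ in the relative order of $\sigma_j$. Taking the tensor product gives
$$N_{\sigma_1}(\pi)\cdots N_{\sigma_d}(\pi) = \text{tr}\bigl(\pi^{\otimes K}\,(T_{\sigma_1}\otimes\cdots\otimes T_{\sigma_d})\bigr).$$
The operator $T := T_{\sigma_1}\otimes\cdots\otimes T_{\sigma_d}$ is \emph{not} $S_n$-equivariant (the pattern condition breaks the symmetry), but its $S_n$-average $\bar{T} := \frac{1}{n!}\sum_{g\in S_n} g^{-\otimes K} T g^{\otimes K}$ is, and one checks that
$$M_{\sigma_1,\ldots,\sigma_d,n}(\pi) = \frac{1}{n!}\sum_{g\in S_n} N_{\sigma_1}(g\pi g^{-1})\cdots N_{\sigma_d}(g\pi g^{-1}) = \text{tr}(\pi^{\otimes K}\bar{T}).$$

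By Jones--Martin partition-algebra Schur--Weyl duality, $\bar{T}$ is realized by an element of the partition algebra $P_K(n)$ acting on $V^{\otimes K}$. The polynomiality claim now follows from the classical trace identity: for any partition-algebra diagram $D\in P_K(n)$, $\text{tr}(\pi^{\otimes K}D)$ is a polynomial in $n$ and $m_1,\ldots,m_K$ in which each contributing labeling yields a product of falling factorials $(m_i)_{c_i}$ with total $i$-weighted degree at most $K$. Expanding $\bar{T}$ in the diagram basis and summing gives the desired polynomial.

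The main obstacle is making the degree bound manifest: one must show that the specific element $\bar{T}$, and its trace against $\pi^{\otimes K}$, has total weighted degree at most $K$. This amounts to tracking how the $S_n$-averaging interacts with the diagram decomposition, the key observation being that the extra ordering and pattern-matching data in $T_{\sigma_j}$ contribute only to the choice of diagram, not to its weighted degree, so the bound is inherited from the number of tensor factors $K$. Once this is in hand, the generalization from the single-pattern case of \cite{GaRy21} requires only notational changes, since the argument never used that all $\sigma_j$ have equal size.
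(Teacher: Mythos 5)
Your plan is essentially correct in outline, but it takes the route of the original Gaetz--Ryba paper rather than the route this paper follows: the proof in Appendix~\ref{sec:poly_proofs} is a purely combinatorial counting argument with no partition algebras. There, $M_{\sigma_1,\dots,\sigma_d,n}(C)$ is read as the expected number of tuples of occurrences $(T_1,\dots,T_d)$ over $\pi\in C$; the count is stratified by ``cases'' (recording the full relative order of $T\cup\pi(T)$ and which elements lie on common cycles), grouped into ``supercases,'' the number of configurations in a supercase is counted directly as a product of factors such as $i^{r_i}P(m_i,r_i)$ for cycles and $n-m_1-2m_2-\cdots-im_i-(\text{const})$ for chains, and each case within a supercase occurs with probability $1/|T\cup\pi(T)|!$. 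Your partition-algebra plan is sound --- the trace identity $N_{\sigma_j}(\pi)=\operatorname{tr}(\pi^{\otimes k_j}T_{\sigma_j})$, the averaging to an $S_n$-equivariant $\bar T$, and Schur--Weyl duality all go through for patterns of unequal sizes exactly as you say --- but be aware that the crux is not only the degree bound: you also need the coefficients of $\bar T$ in a diagram basis to be polynomial in $n$ (and for $n<2K$ the diagram basis is not even linearly independent). This is cleanest in the orbit basis of $\operatorname{End}_{S_n}(V^{\otimes K})$, where the coefficient attached to a set partition $P$ of $[2K]$ works out to $v_P/|P|!$ with $v_P$ the number of orderings of the blocks compatible with the pattern constraints, hence a constant independent of $n$; the degree bound then follows from computing $\operatorname{tr}(\pi^{\otimes K}\tilde x_P)$ component by component. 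As for what each approach buys: yours is shorter once the partition-algebra machinery is in place, while the paper's elementary count is what later lets it sharpen the range of validity in Theorem~\ref{thm:gaetz-ryba-coefficients} to $n\ge k_1+\cdots+k_d+|\lambda|$ rather than $n\ge 2(k_1+\cdots+k_d)$, and it is the template for the explicit expected-value computations exploited in Section~\ref{sec:sig=id}.
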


\begin{theorem}
[cf. Gaetz-Ryba \cite{GaRy21}, Theorem 1.1(b)]
\label{thm:gaetz-ryba-coefficients}
Fix patterns $\sigma_1 \in S_{k_1}, \ldots, \sigma_d \in S_{k_d}$. Then
\[
\alpha_{\sigma_1,\ldots,\sigma_d, n}^\lambda \coloneqq \langle \chi^{\lambda[n]}, M_{\sigma_1,\ldots,\sigma_d,n}\rangle
\]
agrees for all $n \geq k_1+\dots+k_d+|\lambda|$ with a polynomial $a_{\sigma_1,\dots,\sigma_d}^\lambda(n)$ in $n$ of degree at most $k_1+\dots+k_d -|\lambda|$. In particular, this coefficient is zero if $|\lambda|>k_1+\cdots+k_d$.
\end{theorem}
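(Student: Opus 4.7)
The plan is to combine Theorem~\ref{thm:M_poly} with the theory of character polynomials of Frobenius and Murnaghan, reducing the statement to orthogonality of irreducible characters of $S_n$.

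First I would rewrite the coefficient as an expectation over uniform $\pi \in S_n$:
\[
\alpha_{\sigma_1,\dots,\sigma_d,n}^\lambda = \E_{\pi \in S_n}\bigl[\chi^{\lambda[n]}(\pi)\, M_{\sigma_1,\dots,\sigma_d,n}(\pi)\bigr].
\]
By Theorem~\ref{thm:M_poly}, there is a fixed polynomial $F(t, x_1,\dots,x_K)$ of weighted degree at most $K := k_1+\dots+k_d$ (with $\deg t = 1$, $\deg x_i = i$) such that $M_{\sigma_1,\dots,\sigma_d,n}(\pi) = F(n, m_1(\pi),\dots,m_K(\pi))$. In parallel, Murnaghan's character polynomial theorem supplies, for each partition $\mu$, a universal polynomial $q_\mu \in \Q[X_1, X_2, \dots]$ of weighted degree $|\mu|$ (with $\deg X_i = i$) such that $\chi^{\mu[n]}(\pi) = q_\mu(m_1(\pi), m_2(\pi), \dots)$ whenever $\mu[n]$ is a valid partition of $n$.

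Next I would expand $F(n, \vec x) = \sum_{j=0}^{K} n^j P_j(\vec x)$ with each $P_j \in \Q[x_1,\dots,x_K]$ of weighted degree at most $K-j$. Pulling the factors $n^j$ outside the expectation, the theorem reduces to the following key claim: for any polynomial $P \in \Q[x_1,\dots,x_K]$ of weighted degree at most $D$ and any partition $\lambda$, the expectation $\E_\pi[\chi^{\lambda[n]}(\pi)\, P(m_1(\pi),\dots,m_K(\pi))]$ is independent of $n$ for $n$ sufficiently large and vanishes when $|\lambda| > D$. To prove this, I would use that $\{q_\mu\}_\mu$ forms a graded $\Q$-basis of $\Q[X_1, X_2, \dots]$, so $P$ admits a unique expansion $P = \sum_{|\mu| \leq D} c_\mu q_\mu$ with $n$-free scalars $c_\mu$; substituting and applying character orthogonality gives $\E_\pi[\chi^{\lambda[n]} P] = c_\lambda$, which is constant in $n$ and zero when $|\lambda| > D$ since no $\mu$ with $|\mu| = |\lambda|$ can appear in the sum.

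Reassembling, only terms with $j \leq K - |\lambda|$ contribute, so $\alpha_{\sigma_1,\dots,\sigma_d,n}^\lambda$ agrees with a polynomial in $n$ of degree at most $K - |\lambda|$; this degree is negative precisely when $|\lambda| > K$, giving the stated vanishing. The main obstacle is pinning down the stability threshold in $n$: the identity $\chi^{\mu[n]} = q_\mu(m_1, m_2, \dots)$ requires $\mu[n]$ to be well-defined (so $n - |\mu| \geq \mu_1$), and the orthogonality $\langle \chi^{\lambda[n]}, \chi^{\mu[n]} \rangle = \delta_{\lambda\mu}$ requires both $\lambda[n]$ and $\mu[n]$ to be valid partitions of $n$. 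Tracing through all $\mu$ with $|\mu| \leq K$ that can appear in the expansion of $P$, and combining with the condition that $\lambda[n]$ be valid, should yield precisely the stated threshold $n \geq K + |\lambda|$.
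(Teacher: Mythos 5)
Your argument is essentially the ``expand in the basis of character polynomials and use orthogonality'' approach, which the paper itself sketches at the start of Appendix~A.2 and attributes to the logic of \cite{GaRy21}. It correctly yields polynomiality, the degree bound $k_1+\cdots+k_d-|\lambda|$, and the vanishing for $|\lambda|>k_1+\cdots+k_d$. The genuine gap is the stability threshold. Set $K=k_1+\cdots+k_d$. For the orthogonality step you need $\langle \chi^{\lambda[n]}, q_\mu(m_\bullet)\rangle=\delta_{\lambda\mu}$ for \emph{every} $\mu$ with $|\mu|\le K$ that can occur in the expansion of $P$, and this requires each $\mu[n]=(n-|\mu|,\mu_1,\dots)$ to be a genuine partition, i.e.\ $n\ge |\mu|+\mu_1$. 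The worst case is the single-row partition $\mu=(K)$ (which does occur generically, since $M$ has weighted degree up to $K$ in the $m_i$'s), forcing $n\ge 2K$. When $|\lambda|<K$ this is strictly worse than the claimed threshold $K+|\lambda|$, so ``tracing through all $\mu$'' does \emph{not} yield $n\ge K+|\lambda|$; it yields $n\ge 2K$, exactly as the paper states for this argument. Worse, for $K+|\lambda|\le n<2K$ the invalid $\mu[n]$'s do not simply contribute zero: $q_\mu(m_\bullet)$ straightens to $\pm\chi^{\nu}$ for some other partition $\nu$ of $n$ (or to $0$), so the inner product with $\chi^{\lambda[n]}$ could a priori be $\pm 1$, and ruling this out would require an additional straightening analysis you have not supplied.

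For contrast, the paper closes this gap with a different, more hands-on argument: it expands $\langle\prod_i\binom{m_i}{r_i}, M_{\sigma_1,\dots,\sigma_d,n}\rangle$ as the expected number of tuples $(R_1,R_2,\dots,T_1,\dots,T_d)$ of chosen cycles and pattern occurrences, stratifies by the relative order and overlap pattern of $R\cup T$ and its image, and shows each stratum contributes
\[
\frac{1}{P(n,r+s)}\sum_{n_1+\cdots+n_{r+1}=n-r}\ \prod_{i=1}^{r+1}\binom{n_i}{j_i}\binom{n_i}{\ell_i},
\]
which is a polynomial in $n$ because the numerator vanishes at $n=0,1,\dots,r+s-1$. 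Since $r\le|\lambda|$ and $s\le K$, this only requires $n\ge r+s$, hence $n\ge K+|\lambda|$ suffices. If you are content with the weaker threshold $n\ge 2K$, your proof goes through; to prove the theorem as stated you need either the paper's combinatorial refinement or a separate argument handling the non-dominant $\mu[n]$ in the range $K+|\lambda|\le n<2K$.
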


The polynomials $a_{\sigma_1,\dots,\sigma_d}^\lambda(n)$ will be our main object of study in this paper. For convenience, when $\sigma_1=\cdots=\sigma_d$, we write $a_{\sigma,d}^{\lambda}$ and for $a_{\sigma_1,\ldots,\sigma_d}^{\lambda}$ and we write simply $a_{\sigma}^{\lambda}$ for $a_{\sigma,1}^{\lambda}$. We also write $M_{\sigma,n}$ for $M_{\sigma,1,n}$.

\subsection{A positivity conjecture}

Let $\id_k=12\ldots k$ denote the identity permutation in $S_k$, so that $N_{\id_k}(\pi)$ counts increasing subsequences of length $k$ in $\pi$. The following surprising positivity conjecture is the main motivation of this work.

\begin{conjecture}\label{conj:pos}
For all $k,n \in \mathbb{N}$, $M_{\id_k,n}$ is a nonnegative linear combination of irreducible symmetric group characters. Furthermore, the polynomials $a_{\id_k}^\lambda(n)$ are real-rooted, with all roots less than $k$.
\end{conjecture}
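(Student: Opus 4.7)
The plan is to focus on part (b) of the conjecture---real-rootedness with root bound---since this implies part (a) in the stable range $n \geq k + |\lambda|$ (where $a_{\id_k}^\lambda(n)$ agrees with the character coefficient $\alpha_{\id_k,n}^\lambda$), provided the leading coefficient is positive. For $n < k$ the statement is trivial since $N_{\id_k} \equiv 0$ and hence $M_{\id_k,n} = 0$, while for $k \leq n < k + |\lambda|$ I would verify $\alpha_{\id_k,n}^\lambda \geq 0$ directly by a finite computation.

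The first step is to derive a workable closed form. Starting from
\[a_{\id_k}^\lambda(n) = \frac{1}{n!} \sum_{\pi \in S_n} \chi^{\lambda[n]}(\pi) N_{\id_k}(\pi),\]
expand $N_{\id_k}(\pi)$ as a sum over $k$-subsets $S \subseteq [n]$ of the indicator that $\pi|_S$ is increasing, then exchange the order of summation. By the $S_n$-symmetry of $\chi^{\lambda[n]}$, this reduces to $\binom{n}{k}$ times the character sum over the transversal $T = \{\pi : \pi(1) < \cdots < \pi(k)\}$ for the right cosets of $S_k \subset S_n$. Parameterizing $T$ by the value set $V = \pi([k])$ together with a bijection $\{k+1,\ldots,n\} \to [n]\setminus V$ gives a bigrading suitable either for Frobenius reciprocity combined with branching rules, or for the stable-character formulas expressing $\chi^{\lambda[n]}$ as a polynomial in the cycle counts $m_i$. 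The partition-algebra framework of \cite{GaRy21} should provide a cleaner alternative: realize $M_{\id_k,n}$ as the character associated to a specific element of the appropriate centralizer algebra.

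For the real-rootedness statement, I would first verify small cases by computing closed forms and factoring, hoping to identify the roots as combinatorially meaningful integers or half-integers. For general $\lambda$ the plan is an induction via an interlacing family: branching rules for $\chi^{\lambda[n]}$ (as $\lambda$ grows by a box, or as $k$ decreases by one) should translate into three-term or convex-combination recurrences among the polynomials $a_{\id_k}^\lambda$, and if these recurrences preserve interlacing, real-rootedness propagates by standard results on interlacing polynomial families. A more ambitious alternative is to construct an explicit graded representation of $S_n$ whose graded character is $M_{\id_k,n}$, which would establish (a) directly with refined structural information.

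The main obstacle, I expect, is the uniform root bound ``all roots less than $k$'' rather than real-rootedness itself. This bound is surprisingly clean and suggests a representation-theoretic meaning for the roots---perhaps locations where some natural extension of $M_{\id_k,n}$ beyond integer $n$ degenerates, or where specific irreducibles drop out of the support. Without such an interpretation, one is left producing intricate analytic estimates on the largest root, which is usually much harder than real-rootedness alone. A secondary difficulty is that the complexity of the closed forms grows rapidly with $|\lambda|$, so a direct extension of the small-case approach of the paper does not easily scale to a uniform proof.
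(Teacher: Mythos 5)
You should first note that the statement you are addressing is a \emph{conjecture}: the paper itself does not prove it in general, but only for $|\lambda|\le 2$ (Theorem~\ref{thm:pos}), and your text is a research plan rather than a proof. The two steps on which everything hinges --- real-rootedness for general $\lambda$ and the uniform bound ``all roots less than $k$'' --- are left as hoped-for outputs of an interlacing-family argument that is never constructed, and you yourself flag the root bound as the main obstacle. Within the cases the paper does handle, your high-level outline (closed forms, locate the roots, check the leading coefficient, treat small $n$ separately) does match the paper's strategy, but the execution differs at the first step: summing $\chi^{\lambda[n]}$ over the transversal $\{\pi:\pi(1)<\cdots<\pi(k)\}$ is not an induced-character or Frobenius-reciprocity computation, since a transversal is not a union of cosets, so it is unclear that this reduction yields anything tractable. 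The paper instead expands $\chi^{\lambda[n]}$ via the character polynomials in the cycle counts $m_i$ and evaluates each pairing $\langle\binom{m_1}{r},M_{\id_k,n}\rangle$, $\langle m_2,M_{\id_k,n}\rangle$ as an expected number of configurations (a set of fixed points or a $2$-cycle together with an increasing subsequence), reducing everything to the quantities $E(n,k,r)$ whose closed form is Lemma~\ref{lem:E(n,k,r)}; the roots are then located one per unit interval by explicit sign changes, not by interlacing.

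The one concrete error, as opposed to an unexecuted step, is your claim that the range $k\le n<k+|\lambda|$ can be settled ``by a finite computation.'' For fixed $\lambda$ this range contains $|\lambda|$ values of $n$ \emph{for every} $k\in\mathbb{N}$, so it is an infinite family of verifications and requires an argument uniform in $k$. The paper supplies two such arguments: at $n=k$ it observes that $M_{\id_k,k}$ is a scalar multiple of the regular character, so all coefficients $\alpha^{\lambda}_{\id_k,k}$ are automatically nonnegative; and for $|\lambda|=2$ it proves (in Theorem~\ref{thm:sig=id}) that the polynomial formula already agrees with the coefficient at $n=k+1$, closing the remaining gap. A proof for general $\lambda$ would need analogues of both steps. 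Finally, even granting real-rootedness and the root bound, positivity on the stable range still requires the leading coefficient of $a_{\id_k}^{\lambda}(n)$ to be nonnegative; the paper proves this by hand for $|\lambda|\le 2$ (Lemmas~\ref{lem:a2_leadcoeff} and~\ref{lem:a11_leadcoeff}), and your proposal offers no general mechanism for it.
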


Our main theorem establishes Conjecture~\ref{conj:pos} for the coefficients $\langle \chi^{\lambda[n]}, M_{\id_k,n} \rangle$ with $|\lambda| \leq 2$. This is accomplished by giving closed formulas for the coefficients in question in Section~\ref{sec:sig=id}.

\begin{theorem}\label{thm:pos}
Conjecture~\ref{conj:pos} holds for $\lambda=\emptyset, (1), (2),$ and $(1,1)$.
\end{theorem}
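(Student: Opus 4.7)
The plan is to establish both parts of Conjecture~\ref{conj:pos} for each $\lambda \in \{\emptyset, (1), (2), (1,1)\}$ by deriving an explicit closed form for $a_{\id_k}^{\lambda}(n)$ and then verifying real-rootedness with all roots less than $k$ directly on that closed form. Since $N_{\id_k}(\pi)=0$ whenever $n<k$, every character pairing $\langle \chi^{\mu}, M_{\id_k,n}\rangle$ vanishes in that regime, so a root-location result for the interpolating polynomial $a_{\id_k}^{\lambda}(n)$, together with a positive leading coefficient, upgrades to nonnegativity of the coefficient $\alpha^{\lambda}_{\id_k,n}$ on the full range of $n$ for which $\lambda[n]$ is defined.

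For $\lambda=\emptyset$ the character $\chi^{(n)}$ is trivial and $a_{\id_k}^{\emptyset}(n)$ is the uniform expectation $\E[N_{\id_k}]$; a direct counting argument identifies this as $\binom{n}{k}/k!$, which factors into linear terms with roots $0,1,\dots,k-1$, settling the base case. For the remaining three partitions I would use the standard character identities
\[
\chi^{(n-1,1)} = m_1 - 1, \qquad \chi^{(n-2,2)} = \binom{m_1-1}{2} + m_2 - 1, \qquad \chi^{(n-2,1,1)} = \binom{m_1-1}{2} - m_2,
\]
which express each pairing as a linear combination of the four expectations $\E[N_{\id_k}]$, $\E[m_1 N_{\id_k}]$, $\E[m_1^2 N_{\id_k}]$, and $\E[m_2 N_{\id_k}]$ over uniformly random $\pi \in S_n$.

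Each expectation is evaluated by expanding $m_1 = \sum_{j}\mathbf{1}[\pi(j)=j]$ and $m_2 = \sum_{a<b}\mathbf{1}[\pi(a)=b,\pi(b)=a]$, swapping the order of summation with the sum over positions $i_1<\cdots<i_k$ of an increasing subsequence, and then breaking into cases based on how the fixed-point indices or transposed pair overlap with $\{i_1,\dots,i_k\}$. In each case the resulting probability factors through a count of the form $\binom{j-1}{s-1}\binom{n-j}{k-s}$, measuring how many values can legally fill the slots on either side of a forced entry at position $i_s=j$; a Vandermonde--Chu type identity should then collapse the double sum over the overlap position $s$ and overlap value $j$ into a rational expression in $n$.

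The main obstacle is the combinatorial bookkeeping for $\lambda\in\{(2),(1,1)\}$, where $\E[m_1^2 N_{\id_k}]$ requires enumerating ordered pairs of fixed points over all configurations of overlap with the pattern positions (both outside, one inside and one outside, both inside at distinct positions, both at the same position), and $\E[m_2 N_{\id_k}]$ requires tracking how a transposed pair of positions sits relative to the increasing subsequence. Once the closed forms are obtained, I expect to verify the root-location statement by producing explicit factorizations in terms of falling factorials in $n$ or, for the degree-two correction factors arising when $|\lambda|=2$, by computing the discriminant and bounding the two roots directly; in all cases positivity of the leading coefficient should be manifest from the closed form, completing the conversion into the nonnegativity statement of Conjecture~\ref{conj:pos}.
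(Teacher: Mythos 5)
Your outline coincides with the paper's strategy: use the character polynomials for $|\lambda|\le 2$ to reduce each pairing to expectations of $N_{\id_k}$, $m_1N_{\id_k}$, $\binom{m_1}{2}N_{\id_k}$, and $m_2N_{\id_k}$, extract closed forms, and then argue via root location plus leading-coefficient sign. Your character identities are correct (they rewrite to (\ref{eqn:chi(2)}) and (\ref{eqn:chi(1,1)})). The gap is in the central computation. When you condition on a fixed point $j$ sitting in the $s$-th slot of the increasing subsequence, you must choose the $s-1$ \emph{positions} below $j$ and, independently, the $s-1$ \emph{values} below $j$ (and likewise above), so the relevant count is $\binom{j-1}{s-1}^2\binom{n-j}{k-s}^2$, not $\binom{j-1}{s-1}\binom{n-j}{k-s}$. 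The resulting sum $\sum_{j,s}\binom{j-1}{s-1}^2\binom{n-j}{k-s}^2$ is a convolution of \emph{squared} binomial coefficients and does not collapse under any Vandermonde--Chu identity; its evaluation to $\tfrac12\binom{2n}{2k-1}$ is precisely Lemma~\ref{lem:E(n,k,r)} in the case $r=1$ (Claim~\ref{claim:E(n,k,1)}), which the paper proves by establishing the generating-function identity $x^2y^2F(x,y)^2=G(x,y)$ for $F=\sum\binom{n}{k}^2x^{2n}y^{2k}$ via the Narayana-number generating function, and then runs a double induction to reach the case $r=2$ needed for the $\binom{m_1}{2}$ term. This is the main technical content of the computation, and without it you have no closed forms on which to perform the root analysis; everything downstream (factorization, sign of the leading coefficient, intermediate-value arguments locating one root per unit interval) depends on it.

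A second, smaller gap: the observation that the pairing vanishes for $n<k$ does not dispose of the boundary cases. The interpolating polynomial $a_{\id_k}^{\lambda}(n)$ is only guaranteed by Theorem~\ref{thm:gaetz-ryba-coefficients} to agree with $\alpha_{\id_k,n}^{\lambda}$ for $n\ge k+|\lambda|$, so for $|\lambda|=2$ you must separately treat $n=k$ and $n=k+1$. The paper handles $n=k$ by noting that $M_{\id_k,k}$ is $\tfrac{1}{k!}$ times the character of the regular representation (hence all coefficients are nonnegative there), and extends the polynomial agreement down to $n=k+1$ by checking the degenerate terms $E(k-1,k,0)=0$ directly in the proof of Theorem~\ref{thm:sig=id}. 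Your proposal as written leaves these values of $n$ uncovered.
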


\begin{remark}
For all other patterns $\id_k \neq \sigma \in S_k$, some coefficient $\langle \chi^{\mu}, M_{\sigma,n} \rangle$ must be negative, since $M_{\sigma,n}(\id_n)=0$. 
\end{remark}

Conjecture~\ref{conj:pos} and Theorem~\ref{thm:pos} suggest the following natural question:

\begin{question}
Is there a natural $S_n$-module whose character is a multiple of $M_{\id_k,n}$?
\end{question}

\subsection{Outline}
This paper is organized as follows: In Section~\ref{sec:sig=id}, we compute closed forms for the polynomials $a_{\id_k}^{(1)},a_{\id_k}^{(1,1)}$, and $a_{\id_k}^{(2)}$, and verify our positivity conjecture (Conjecture \ref{conj:pos}) for these cases. In Section~\ref{sec:lam=(1)}, we study the case $a_\sigma^{(1)}(n)$ for general $\sigma$ and give formulas for the polynomials and their leading coefficients (since leading coefficients determine positivity for large $n$). In Section~\ref{sec:conclusion}, we discuss potential pathways to prove Conjecture~\ref{conj:pos} more generally, as well as several other conjectures and open questions. Finally, in Appendix ~\ref{sec:poly_proofs}, we generalize and give an alternate proof of the polynomiality results of Gaetz and Ryba \cite{GaRy21} by describing a process for computing the polynomials in each case, and in Appendix \ref{sec:technical_lemma}, we prove a key technical lemma that is used for our main formulas in Section \ref{sec:sig=id}.

\section{The cases $a_{\id_k}^\emptyset(n), a_{\id_k}^{(1)}(n),$ $a_{\id_k}^{(2)}(n)$, and $a_{\id_k}^{(1,1)}(n)$}\label{sec:sig=id}

It follows from Theorem \ref{thm:gaetz-ryba-coefficients} that for $n\ge 2k,$ the coefficients $\alpha_{\id_k,n}^\lambda = \langle \chi^{\lambda[n]}, M_{\id_k,n}\rangle$ agree with polynomials $a_{\id_k}^\lambda(n)$ in $n$ of degree at most $k - |\lambda|.$ Our goal in this section will be to compute closed forms for the polynomials $a_{\id_k}^{(1)}(n),$ $a_{\id_k}^{(2)}(n)$, and $a_{\id_k}^{(1,1)}(n)$ (Theorem \ref{thm:sig=id}), and to verify Conjecture~\ref{conj:pos} for these cases (Theorem \ref{thm:pos}). Our approach will be to express each of these polynomials in terms of certain expected values $E(n, k, r)$ (Lemmas \ref{lem:m1Cr} and \ref{lem:m2}), for which we will then compute a closed form (Lemma \ref{lem:E(n,k,r)}). To do so, we will make use of the \emph{character polynomial} formulas, which express the symmetric group characters $\chi^{\lambda[n]}(\pi)$ as polynomials dependent only on $\lambda$ and the cycle type of $\pi$:

\begin{theorem}
[see Macdonald \cite{Macdonald}]
\label{thm:char_poly}
Let $m_i(\pi)$ denote the number of $i$-cycles in $\pi$. Then provided that $n\ge |\lambda|$, the character $\chi^{\lambda[n]}$ is a polynomial in $m_1,\dots,m_{|\lambda|}$ of degree at most $|\lambda|$, where $|\lambda|=\lambda_1 + \dots+\lambda_i$ and $m_i$ has degree $i$. Specifically, we can write $$\chi^{\lambda[n]} = \sum_{|\rho|\le |\lambda|}F_\rho^\lambda\binom{m_1}{r_1}\binom{m_2}{r_2}\dots,$$ where $\rho = 1^{r_1}2^{r_2}\dots$ and $$F_\rho^\lambda = (-1)^{|\lambda|-|\rho|}\sum_\mu \chi_\rho^\mu,$$ where the sum is taken over all $\mu$ with $|\mu|=|\rho|$ such that $|\lambda|-|\rho|$ boxes can be added to $\mu$ to get $\lambda$ with no two boxes added in the same row.
\end{theorem}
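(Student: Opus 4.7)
The plan is to derive the character formula as a corollary of a Jacobi--Trudi-style identity for the Schur function $s_{\lambda[n]}$, combined with the standard formula for characters of representations induced from Young subgroups.

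First, I would establish the symmetric function identity
$$s_{\lambda[n]} = \sum_{\mu} (-1)^{|\lambda|-|\mu|} \, s_\mu \, h_{n-|\mu|},$$
where the sum runs over partitions $\mu \subseteq \lambda$ such that $\lambda/\mu$ is a vertical strip (no two boxes in the same row). This can be verified by expanding the Jacobi--Trudi determinant for $s_{\lambda[n]}$ along its first row, by induction on $|\lambda|$ using Pieri's rule, or via a direct sign-reversing involution on semistandard tableaux. The case $\lambda = (1)$, for instance, reduces to the familiar identity $s_{(n-1,1)} = s_{(1)} h_{n-1} - h_n$.

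Next, I would pair both sides with the power-sum $p_\tau$, where $\tau$ is the cycle type of a permutation $\pi \in S_n$. The left-hand side yields $\chi^{\lambda[n]}(\pi)$ by Frobenius's formula, while each term on the right contributes $\langle p_\tau, s_\mu h_{n-|\mu|} \rangle$. The latter equals the character at $\pi$ of the induced representation $\mathrm{Ind}_{S_{|\mu|} \times S_{n-|\mu|}}^{S_n}(\chi^\mu \boxtimes \mathbf{1})$, which by the standard induced-character formula from a Young subgroup equals
$$\sum_{\rho : |\rho| = |\mu|} \chi^\mu_\rho \prod_i \binom{m_i(\pi)}{r_i},$$
with $\rho = 1^{r_1}2^{r_2}\cdots$ ranging over cycle types that can be extracted from a sub-multiset of the cycles of $\pi$ (and the binomial factor counting the number of such sub-multisets). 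Substituting back and switching the order of summation produces exactly the claimed expansion, identifying $F_\rho^\lambda = (-1)^{|\lambda|-|\rho|} \sum_\mu \chi^\mu_\rho$ with $\mu$ ranging as in the statement. The polynomial degree bound is then automatic, since each $\prod_i \binom{m_i}{r_i}$ has degree $|\rho| \leq |\lambda|$ and only involves $m_i$ with $i \leq |\lambda|$.

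The hard part will be proving the Jacobi--Trudi-style identity in the first step: although Pieri's rule is classical, extracting the correct alternating sign and the precise vertical-strip condition requires either a careful inclusion--exclusion or a clean expansion of a specific Jacobi--Trudi determinant. Once that identity is in hand, the remaining argument is a routine character-theoretic substitution.
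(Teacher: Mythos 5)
Your proposal is correct, but there is nothing in the paper to compare it against: Theorem~\ref{thm:char_poly} is imported from Macdonald without proof (it is essentially Example 14 in \S I.7 of \cite{Macdonald}), and your argument is in fact the standard derivation behind that example. Both of your main steps check out. The expansion $s_{\lambda[n]} = \sum_{\mu} (-1)^{|\lambda|-|\mu|} s_\mu h_{n-|\mu|}$ over $\mu$ with $\lambda/\mu$ a vertical strip is correct, and your first suggested route does go through cleanly: expanding the Jacobi--Trudi determinant for $\lambda[n]$ along its first row, the minor obtained by deleting row $1$ and column $j$ is the skew Jacobi--Trudi determinant for $s_{\lambda/(1^{j-1})}$, and the dual Pieri rule $s_{\lambda/(1^{a})} = \sum_{\mu} s_\mu$ (sum over $\mu$ with $\lambda/\mu$ a vertical strip of size $a$) then produces exactly the signed vertical-strip sum; it is worth making that identification of the minors explicit, since they are not themselves straight Schur functions. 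The second step, computing $\langle s_\mu h_{n-|\mu|}, p_\tau\rangle$ via the induced character formula from the Young subgroup $S_{|\mu|}\times S_{n-|\mu|}$ (equivalently, via the coproduct on power sums), correctly yields $\sum_{|\rho|=|\mu|} \chi^\mu_\rho \prod_i \binom{m_i}{r_i}$, and interchanging the two sums recovers $F^\lambda_\rho$ exactly as stated, together with the degree bound. So the proof is complete modulo writing out the details you already flag.
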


The first few character polynomials, which we will make use of, are:
\begin{align}
    \chi^{(n)} &= 1, \label{eqn:chi_emptyset}\\
    \chi^{(n-1,1)} &= m_1-1, \label{eqn:chi(1)}\\
    \chi^{(n-2,2)} &= \binom{m_1}{2} + m_2 - m_1, \label{eqn:chi(2)}\\
    \chi^{(n-2,1,1)} &= \binom{m_1}{2} - m_2 - m_1 + 1.\label{eqn:chi(1,1)}
\end{align}
In the proof of Theorem \ref{thm:sig=id}, we will use the above formulas to break each inner product into a linear combination of simpler inner products, which we will then interpret in terms of expected values, and our lemmas will allow us to calculate each expected value.

\subsection{Preliminary lemmas}

\begin{definition}\label{def:E(n,k,r)}
Let $E(n,k,r)$ denote the expected value over $\pi\in S_n$ of the number of ordered pairs $(R, T),$ where $R$ is an unordered set of $r$ fixed points in $\pi$ and $T$ is an increasing subsequence of length $k$ containing all of those fixed points ($T\supseteq R$).
\end{definition}

Knowing $E(n,k,r)$ will give us formulas for our polynomials $a_{\id_k}^{(1)}(n),$ $a_{\id_k}^{(2)}(n)$, and $a_{\id_k}^{(1,1)}(n)$ because of the character polynomial formulas (\ref{eqn:chi_emptyset}), (\ref{eqn:chi(1)}), (\ref{eqn:chi(2)}), and (\ref{eqn:chi(1,1)}), together with the following two lemmas:

\begin{lemma}\label{lem:m1Cr}
The inner product of $\binom{m_1}{r}$ with $M_{\id_k,n}$ is given by $$\left\langle \binom{m_1}{r}, M_{\id_k,n}\right\rangle = \sum_{j=0}^r \frac{E(n-r+j,k,j)}{(r-j)!}.$$
\end{lemma}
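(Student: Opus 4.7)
The plan is to prove the identity by a bijective double-counting argument. First, I would rewrite the inner product as
\[
\left\langle \binom{m_1}{r}, M_{\id_k,n}\right\rangle = \frac{1}{n!}\sum_{\pi \in S_n} \binom{m_1(\pi)}{r} N_{\id_k}(\pi),
\]
using that $\binom{m_1}{r}$ is a class function and $M_{\id_k,n}$ is the conjugacy-class average of $N_{\id_k}$, so the two stages of averaging collapse into a single average over $S_n$. The summand counts triples $(\pi, R, T)$ where $R$ is an unordered $r$-subset of fixed points of $\pi$ and $T$ is an increasing subsequence of length $k$ in $\pi$, with no prescribed relation between $R$ and $T$.

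The main step is to stratify these triples by $j := |R \cap T|$, writing $R = R' \sqcup R''$ with $R' = R \cap T$ (size $j$) and $R'' = R \setminus T$ (size $r-j$), noting that $R''$ consists of fixed points disjoint from $T$. I would construct a bijection between triples with $|R \cap T| = j$ and 4-tuples $(R'', \pi', R', T')$, where $R'' \subseteq [n]$ is an arbitrary $(r-j)$-subset, $\pi' \in S_{n-r+j}$, and $(R', T')$ is a pair of the kind enumerated by $E(n-r+j,k,j)$. The forward map deletes the fixed-point positions in $R''$ from $\pi$ and order-preservingly relabels $[n]\setminus R''$ to $[n-r+j]$; the inverse reinserts these positions as fixed points. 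The crucial checks are that order-preserving relabeling preserves both the fixed-point property of $R'$ and the increasing-subsequence property of $T$, and that reinserting $R''$ produces a legitimate permutation of $[n]$ with no conflicts (automatic, since $T \subseteq [n]\setminus R''$ by construction).

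Given the bijection, there are $\binom{n}{r-j}$ choices for $R''$ and, by the definition of $E(n-r+j,k,j)$ as an expectation, exactly $(n-r+j)!\, E(n-r+j,k,j)$ ways to complete the tuple $(\pi', R', T')$. Summing over $j \in \{0,\dots,r\}$ and dividing by $n!$ yields
\[
\sum_{j=0}^{r} \frac{1}{n!}\binom{n}{r-j}(n-r+j)!\,E(n-r+j,k,j) = \sum_{j=0}^{r}\frac{E(n-r+j,k,j)}{(r-j)!},
\]
which is the claim. The main obstacle I anticipate is the careful bookkeeping for the bijection — in particular, verifying that every $(r-j)$-subset $R''$ of $[n]$ really is allowed (it is, since the inverse's $T$ lives in $[n]\setminus R''$ automatically) and that the fixed-point and increasing-subsequence conditions transfer unambiguously in both directions of the relabeling.
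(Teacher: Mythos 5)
Your proposal is correct and follows essentially the same route as the paper: both rewrite the inner product as $\frac{1}{n!}\sum_{\pi}\binom{m_1(\pi)}{r}N_{\id_k}(\pi)$, stratify by $j=|R\cap T|$, and use the delete-and-relabel correspondence with the count $\binom{n}{r-j}\cdot\frac{(n-r+j)!}{n!}=\frac{1}{(r-j)!}$. The only difference is presentational — you phrase the removal/reinsertion of the $r-j$ unused fixed points as an explicit bijection on tuples, while the paper phrases it as a conversion factor between expectations over $S_{n-r+j}$ and $S_n$ — and your verification that every $(r-j)$-subset $R''$ is admissible in the inverse direction is exactly the check the paper makes via its examples.
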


\begin{proof}
First we will interpret the inner product as an expected value. By definition, the inner product of two class functions $f$ and $g$ on $S_n$ is $$\langle f, g\rangle = \frac{1}{n!} \sum_{C\text{ a conjugacy class in }S_n}|C| \cdot f(C)\overline{g(C)}.$$ So, our inner product here can be expanded as $$\frac{1}{n!}\sum_{C\text{ a conjugacy class in }S_n} |C|\cdot \binom{m_1(C)}{r}M_{\tn{id}_k,n}(C).$$ Since $M_{\id_k,n}(C)$ is the average value of $N_{\id_k,n}(\pi)$ over $\pi \in C,$ this sum is equal to $$\frac{1}{n!}\sum_{C\text{ a conjugacy class in }S_n} \left(\frac{1}{|C|}\sum_{\pi\in C}|C|\cdot \binom{m_1(C)}{r}N_{\id_k,n}(\pi)\right),$$ which simplifies to $$\frac{1}{n!}\sum_{\pi\in S_n} \binom{m_1(\pi)}{r}N_{\id_k}(\pi).$$ 

The $\binom{m_1(\pi)}{r}$ represents the number of ways to choose a set $R$ consisting of $r$ of the $m_1(\pi)$ fixed points in $\pi,$ and the $N_{\id_k}(\pi)$ represents the number of occurrences $T$ of $\id_k$ in $\pi$, or equivalently. the number of increasing subsequences of length $k.$ Thus, this sum can be interpreted as the expected value over $\pi\in S_n$ of the number of ordered pairs $(R,T)$ consisting of an unordered set $R$ of $r$ fixed points in $\pi$ together with a length $k$ increasing subsequence $T$ in $\pi$. Note that $R$ need not contain all fixed points of $\pi,$ and unlike in Definition \ref{def:E(n,k,r)}, the fixed points in $R$ need not also be in $T.$

% \begin{example}
% If $n = 5,$ $k = 4,$ $r = 2,$ and $$\pi = \begin{pmatrix}
% 1 & 2 & 3 & 4 & 5 \\
% 1 & 3 & 2 & 4 & 5
% \end{pmatrix}.$$
% Then $\pi$ has 3 fixed points (1, 4, and 5), so $m_1(\pi)=3$, and thus the number of ways to choose 2 fixed points in $\pi$ is $\binom{m_1(\pi)}{2} = \binom{3}{2},$ meaning 3 choices for $R$. The occurrences of $\id_4$ in $\pi$ are $(1,2,4,5)$ and $(1,3,4,5),$ so $N_{\id_4}(\pi) = 2,$ and there are 2 choices for $T.$
% \end{example}

To get the right side, we consider cases based on $j \coloneqq |R\cap T|,$ the number of chosen fixed points contained in the subsequence. That is, we break the expected value into a sum from $j=0$ to $r,$ of the expected value over $\pi \in S_n$ of the number of pairs consisting of $r$ fixed points in $\pi$ and an increasing length $k$ subsequence in $\pi$ \emph{containing exactly $j$ of those fixed points}: $$\mathbb{E}_{\pi \in S_n}(\text{total }\#\text{ of pairs }(R,T)) = \sum_{j=0}^r\mathbb{E}_{\pi \in S_n}(\#\text{ of pairs }(R,T)\text{ s.t. }|R\cap T|=j).$$ Note that by definition, $E(n,k,r)$ represents the $r=j$ term on the right side, since in that case we require all $r$ fixed points to also be in the subsequence.

If exactly $j$ of the fixed points are in the subsequence, we can imagine removing the other $r-j$ fixed points and then relabeling the remaining elements to get a permutation $\pi' \in S_{n-r+j}$ together with a set $R'$ of $j$ fixed points in $\pi'$ and an increasing subsequence $T'$ in $\pi'$ of length $k$ \emph{containing all those fixed points}. The mapping is explained via the example below.

\begin{example} \label{ex:remove_fixed_pts}
Let $n=9,$ $r = 4,$ $j=1,$ and $k=3,$ and let $\pi$ be the permutation shown below, with the boxed numbers representing the increasing subsequence $T = (2, 5, 7)$ of length $k=3,$ and the red numbers representing the $r=4$ chosen fixed points, $R = \{1, 2, 6, 9\}:$ $$\pi = \begin{pmatrix}
\tc{red}{1} & \boxed{\tc{red}{2}} & 3 & 4 & \boxed{5} & \tc{red}{6} & \boxed{7} & 8 & \tc{red}{9} \\
\tc{red}{1} & \boxed{\tc{red}{2}} & 5 & 3 & \boxed{4} & \tc{red}{6} & \boxed{7} & 8
& \tc{red}{9} \end{pmatrix}.$$ One of the red fixed points, namely 2, is also in the subsequence, so $R\cap T = \{2\}$ is a set of size $j=1.$ We will now remove the $r-j=3$ fixed points which are not contained in the subsequence, namely 1, 6, and 9:
$$\begin{pmatrix}
\boxed{\tc{red}{2}} & 3 & 4 & \boxed{5} & \boxed{7} & 8 \\
\boxed{\tc{red}{2}} & 5 & 3 & \boxed{4} & \boxed{7} & 8
\end{pmatrix}.$$ We are left with a permutation on the set $\{2,3,4,5,7,8\}$ of size $n-r+j=6$. By mapping these 6 elements to the numbers $\{1,2,3,4,5,6\}$ in increasing order ($2\mapsto 1, 3 \mapsto 2, 4 \mapsto 3, 5\mapsto 4, 7\mapsto 5, 8\mapsto 6$), we get a permutation on the set $\{1,2,3,4,5,6\}$:
$$\pi' = \begin{pmatrix}
 \boxed{\tc{red}{1}} & 2 & 3 & \boxed{4} & \boxed{5} & 6 \\
 \boxed{\tc{red}{1}} & 4 & 2 & \boxed{3} & \boxed{5} & 6
\end{pmatrix}.$$
Thus, this process results in a permutation $\pi' \in S_6 = S_{n-r+j}$, together with a subsequence $T' = (1, 4, 5)$ (the image of the original subsequence) of length $k=3,$ and a new set of chosen fixed points $R' = \{1\},$ of size $j=1.$ The key is that now all fixed points are contained in the subsequence, since we removed the ones that were not.
\end{example}

Now, by definition, the expected value over all $\pi' \in S_{n-r+j}$ of the number ordered pairs $(R',T'),$ where $R'$ is a set of $j$ fixed points in $\pi'$ and $T'$ an increasing length $k$ subsequence containing all of them, is $E(n-r+j,k,j).$ For instance, in the example above, $E(6,3,1)$ would be the expected value over $\pi'\in S_6$ of the number of pairs $(R',T')$ with $k = |T'|=3,j = |R'|=1,$ and $R'\se T'.$ 

It remains to ``add back" the $r-j$ removed fixed points to find the expected value over all $\pi \in S_n$ of the number of pairs $(R,T)$ with $R\cap T = j.$ Each permutation $\pi'$ (together with the corresponding $R'$ and $T'$) could have come from $\binom{n}{r-j}$ different permutations $\pi$ by removing fixed $r-j$ fixed points, because we could choose any subset of $r-j$ elements of $[n]$ to be the set $R\bs T$ of the $r-j$ removed fixed points that we add back, as shown in the example below.

\begin{example} \label{ex:adding_fixed_pts}
In our previous example, we could have chosen any $r-j=3$ elements of $\{1,2,\dots,9\}$ to be the 3 removed fixed points, instead of choosing $\{1,6,9\}$ as above. For instance, suppose we choose $R\bs T = \{3,4,7\}$ to be the 3 removed fixed points. Then, to get from the same $\pi',R',$ and $T'$ as in the previous example to a different $\pi,R,$ and $T,$ we first turn $\pi'$ into a corresponding permutation on the set $\{1,2,5,6,8,9\}$ of all elements of $[n]$ except 3, 4, and 7:
$$\begin{pmatrix}
 \boxed{\tc{red}{1}} & 2 & 5 & \boxed{6} & \boxed{8} & 9 \\
 \boxed{\tc{red}{1}} & 6 & 2 & \boxed{5} & \boxed{8} & 9
\end{pmatrix}.$$
Then, we insert the three removed fixed points 3, 4, and 7 to get a different original permutation $\pi$:
$$\pi = \begin{pmatrix}
 \boxed{\tc{red}{1}} & 2 & \tc{red}{3} & \tc{red}{4} & 5 & \boxed{6} & \tc{red}{7} & \boxed{8} & 9 \\
 \boxed{\tc{red}{1}} & 6 & \tc{red}{3} & \tc{red}{4} & 2 & \boxed{5} & \tc{red}{7} & \boxed{8} & 9
\end{pmatrix}.$$ In this case, we would get $R = \{1,3,4,7\}$ and $T=(1,6,8).$
\end{example}

% More generally, given any $\pi \in S_n$ together with $r-j$ values that map to each other in some order, we could similarly remove those $r-j$ values and then relabel to get a corresponding permutation $\pi'\in S_{n-2},$ as shown in the example below.

% \begin{example}
% Suppose that $n = 9$ as before, but that instead of being fixed points, the 3, 4, and 7 form a 3-cycle:
% $$\pi = \begin{pmatrix}
%  \boxed{\tc{red}{1}} & 2 & \tc{red}{3} & \tc{red}{4} & 5 & \boxed{6} & \tc{red}{7} & \boxed{8} & 9 \\
%  \boxed{\tc{red}{1}} & 6 & \tc{red}{4} & \tc{red}{7} & 2 & \boxed{5} & \tc{red}{3} & \boxed{8} & 9
% \end{pmatrix}.$$ Then we could remove that 3-cycle to get the same $\pi'$ as before, but we would not want to count this term towards our sum, since the $r-k$ removed values aren't fixed points.
% \end{example}

% Thus, for each $\pi$ corresponding to a given $(\pi', R', S')$ that we do want to count, there are $($

% Now using linearity of expectations, we can rewrite our desired expected value as $$\sum_{\substack{\pi \in S_n,} \\ {\pi'\in S_{n-2}\text{ formed by removing }r-j\text{ numbers from }\pi}} \mb{P}(\text{the $r-j$ removed values are fixed points in $\pi$})\cdot(\#\text{ pairs }.$$

Since there are $\binom{n}{r-j}$ ways to choose the $r-j$ elements of $R\bs T,$ each choice of $\pi', R',$ and $T'$ corresponds to $\binom{n}{r-j}$ choices of $\pi, R,$ and $T,$ so we need to multiply by $\binom{n}{r-j}$ in going from the expected value over $\pi'\in S_{n-r+j}$ to the expected value over $\pi \in S_n.$ However, we also need to multiply by $\frac{(n-r+j)!}{n!},$ since taking an expected value over $S_{n-r+j}$ involves dividing by $(n-r+j)!$ for the $(n-r+j)!$ possible values of $\pi'$, while taking an expected value over $S_n$ involves dividing by $n!$ for the $n!$ possible values of $\pi$. Thus, in total we multiply by $\binom{n}{r-j}\cdot\frac{(n-r+j)!}{n!} = \frac{1}{(r-j)!}.$ The expected value over $\pi \in S_n$ of the number of pairs $(R,S)$ with $|R\cap T| = j$ is thus $\frac{1}{(r-j)!}E(n-r+j,k,j).$ Summing over $j$ gives the claimed formula.

\end{proof}

\begin{lemma}\label{lem:m2}
The inner product of $m_2$ with $M_{\id_k,n}$ is given by $$\langle m_2, M_{\id_k,n}\rangle = \frac{1}{2}E(n-2,k,0) + \frac{1}{k}\left(1-\frac{1}{n}\right)E(n-2,k-1,0) + \frac{1}{n}E(n-1,k,1).$$
\end{lemma}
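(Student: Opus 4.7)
Following the template of Lemma~\ref{lem:m1Cr}, the first step is to expand the inner product as
\[
\langle m_2, M_{\id_k,n}\rangle = \frac{1}{n!}\sum_{\pi\in S_n}m_2(\pi)N_{\id_k}(\pi) = \E_{\pi\in S_n}\bigl[\#\{(C,T)\}\bigr],
\]
where $C=\{a,b\}$ (with $a<b$) ranges over the 2-cycles of $\pi$ and $T$ over length-$k$ increasing subsequences of $\pi$. I would then observe that $|C\cap T|\neq 2$: if both $a$ and $b$ lay in $T$, the increasing condition would force $\pi(a)<\pi(b)$, contradicting $\pi(a)=b>a=\pi(b)$. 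So the count splits according to $|C\cap T|\in\{0,1\}$.

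The first term, $\tfrac{1}{2}E(n-2,k,0)$, arises from the case $|C\cap T|=0$ by the argument of Lemma~\ref{lem:m1Cr}: removing the positions and values $\{a,b\}$ and relabeling gives a bijection between such $(\pi,C,T)$ and pairs $(\pi',T')\in S_{n-2}\times\{\text{length-}k\text{ increasing subsequences of }\pi'\}$ together with a choice of $\{a,b\}\subset[n]$, contributing a factor of $\binom{n}{2}(n-2)!/n!=\tfrac{1}{2}$ per $(\pi',T')$.

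For the case $|C\cap T|=1$, the involution $\pi\mapsto\pi^{-1}$ swaps the subcases $a\in T$ and $b\in T$ (since it preserves 2-cycles and sends a length-$k$ increasing subsequence to the one whose positions are the original subsequence's values), so it suffices to compute the subcase $a\in T$ and double. Fixing $a<b$ and the rank $j$ of $a$ in $T$, direct enumeration gives $\binom{a-1}{j-1}\binom{n-a-1}{k-j}$ choices for the other positions of $T$, $\binom{b-2}{j-1}\binom{n-b}{k-j}$ choices for the other values of $T$, and $(n-k-1)!$ ways to extend $\pi$ to the complementary positions and values. Doubling and summing yields
\[
\frac{2(n-k-1)!}{n!}\sum_{j=1}^{k}\sum_{a<b}\binom{a-1}{j-1}\binom{n-a-1}{k-j}\binom{b-2}{j-1}\binom{n-b}{k-j}
\]
as the total contribution of $|C\cap T|=1$ to the expectation.

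The main obstacle will be the algebraic simplification of this sum to $\tfrac{n-1}{nk}E(n-2,k-1,0)+\tfrac{1}{n}E(n-1,k,1)$. The key trick is the substitution $b'=b-1$, which converts $a<b$ into $a\leq b'$ and factors the summand as $g_a(j)\,g_{b'}(j)$ with $g_a(j):=\binom{a-1}{j-1}\binom{n-a-1}{k-j}$. Combining the identity $\sum_{a\leq b'}g_ag_{b'}=\tfrac{1}{2}\bigl[(\sum_a g_a)^2+\sum_a g_a^2\bigr]$ with the Chu--Vandermonde evaluation $\sum_{a=1}^{n-1}g_a(j)=\binom{n-1}{k}$ then splits the sum into $k\binom{n-1}{k}^2$ and $\sum_{f,j}\binom{f-1}{j-1}^2\binom{n-1-f}{k-j}^2$. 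Using $\binom{n-1}{k}=\tfrac{n-k}{k}\binom{n-1}{k-1}$, the first piece matches $\tfrac{n-1}{nk}E(n-2,k-1,0)$; the second piece matches $\tfrac{1}{n}E(n-1,k,1)$ once one verifies the parallel closed form $E(n-1,k,1)=\tfrac{(n-k-1)!}{(n-1)!}\sum_{f,j}\binom{f-1}{j-1}^2\binom{n-1-f}{k-j}^2$ by an analogous enumeration over triples $(\pi'',f,T'')$ with $f$ a fixed point of $\pi''$ and $T''$ a length-$k$ increasing subsequence through $f$.
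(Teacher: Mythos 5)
Your argument is correct, and for the main case it takes a genuinely different route from the paper. Both proofs begin identically (expand the inner product as $\mathbb{E}_{\pi}[\#\{(C,T)\}]$, rule out $|C\cap T|=2$, and handle $|C\cap T|=0$ by deleting the 2-cycle to get $\tfrac12 E(n-2,k,0)$), but they diverge on $|C\cap T|=1$. The paper splits that case combinatorially according to whether the two entries of the 2-cycle are consecutive integers: if not, it deletes both and re-inserts them via an averaging argument (the two entries must land in the same of the $k$ gaps of $T'$, ``probability $1/k$''), and if so, it merges them into a marked fixed point, producing the $\tfrac1k(1-\tfrac1n)E(n-2,k-1,0)$ and $\tfrac1n E(n-1,k,1)$ terms directly. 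You instead enumerate the whole case at once --- using the $\pi\mapsto\pi^{-1}$ involution to reduce to the smaller entry lying in $T$, which checks out since inversion preserves 2-cycles and exchanges positions with values of an increasing subsequence --- and then split the resulting double sum algebraically via the shift $b'=b-1$, the identity $\sum_{a\le b'}g_ag_{b'}=\tfrac12[(\sum_a g_a)^2+\sum_a g_a^2]$, and Chu--Vandermonde; the deferred identification of the diagonal piece with $\tfrac1n E(n-1,k,1)$ is exactly Lemma~\ref{lem:E(n,k,r)_sum} with $r=1$ after the index shift $n_1=f-1$, $k_1=j-1$, so nothing new is needed there. I verified the enumeration ($\binom{a-1}{j-1}\binom{n-a-1}{k-j}$ positions, $\binom{b-2}{j-1}\binom{n-b}{k-j}$ values, $(n-k-1)!$ completions) and the final coefficient matches via $(n-1)\binom{n-2}{k-1}=k\binom{n-1}{k}$. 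What each approach buys: the paper's decomposition makes the two terms combinatorially transparent but leans on an informal ``equally likely gaps'' argument in its Case 2, whereas your computation is fully explicit and rigorous at the cost of the two terms emerging from an algebraic identity rather than a visible bijection (note that your $E(n-1,k,1)$ term does not coincide set-theoretically with the paper's $j=i+1$ case --- only the totals agree).
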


\begin{proof}
By similar reasoning to the start of the proof of Lemma \ref{lem:m1Cr}, this inner product can be rewritten as $$\frac{1}{n!}\sum_{\pi\in S_n}m_2(\pi)N_{\id_k}(\pi).$$ Since $m_2(\pi)$ is the number of 2-cycles in $\pi$ and $N_{\id_k}(\pi)$ is the number of increasing subsequences in $\pi,$
this inner product represents the expected value over $\pi \in S_n$ of the number of pairs $((i \ j), T)$ consisting of a 2-cycle $(i \ j)$ in $\pi$ and an increasing subsequence $T$ of length $k$ in $\pi$. We will now consider cases based on the overlap between the 2-cycle and $T$. It is impossible for both $i$ and $j$ to be contained in $T,$ since if $i<j$ then $\pi(i)  = j > i = \pi(j)$.

\subsubsection*{Case 1: $i,j\not\in T.$}

If $i,j\not\in T$, then we can remove $i$ and $j$ in the same manner as in Example \ref{ex:remove_fixed_pts} to get a permutation $\pi'\in S_{n-2}$ together with an increasing subsequence $T'$ in $\pi'$ of length $k.$ 

\begin{example}
Let $n = 8$ and $k = 4,$ and let $\pi$ be the permutation below, with the boxed numbers indicating the increasing subsequence $T = (2, 3, 6, 8)$ and the red numbers indicating the 2-cycle $(i \ j) = (1 \ 4)$ (which does not overlap with $T$): 
$$\pi = \begin{pmatrix}
\tc{red}{1} & \boxed{2} & \boxed{3} & \tc{red}{4} & 5 & \boxed{6} & 7 & \boxed{8} \\
\tc{red}{4} & \boxed{2} & \boxed{5} & \tc{red}{1} & 3 & \boxed{7} & 6 & \boxed{8} \end{pmatrix}.$$ Removing 1 and 4 gives
$$\begin{pmatrix}
\boxed{2} & \boxed{3}& 5 & \boxed{6} & 7 & \boxed{8} \\
\boxed{2} & \boxed{5} & 3 & \boxed{7} & 6 & \boxed{8} \end{pmatrix},$$
and then relabeling to get a permutation in $S_{n-2} = S_6$ gives $$\pi' = \begin{pmatrix}
\boxed{1} & \boxed{2} & 3 & \boxed{4} & 5 & \boxed{6}\\
\boxed{1} & \boxed{3} & 2 & \boxed{5} & 4 & \boxed{6} \end{pmatrix}.$$ The corresponding increasing subsequence in $\pi'$ is $T' = (1,2,4,6).$
\end{example}

The expected value over $\pi'\in S_{n-2}$ of the number of choices for the increasing subsequence $T'$ of length $k$ is $E(n-2,k,0)$, since $\pi'$ need not contain any chosen fixed points. Now to translate back to $\pi,$ we need to add back in the 2-cycle $(i \ j)$. Like in Example \ref{ex:adding_fixed_pts}, we can get a pair $(\pi, T)$ given any of the $\binom{n}{2}$ choices for the pair $(i \ j)$, and any choice of the pair $(\pi', T').$ Thus, we need to multiply by $\binom{n}{2}$ in going from the expected value over $\pi'\in S_{n-2}$ to the expected value over $\pi \in S_n.$ We also need to multiply by $\frac{(n-2)!}{n!}$ since the expected value over $S_n$ involves a $\frac{1}{n!}$ while the expected value over $S_{n-1}$ involves a $\frac{1}{(n-2)!}.$ Thus, in total, we multiply by $\binom{n}{2}\cdot\frac{(n-2)!}{n!}=\frac12,$ which gives the $\frac12 E(n-2,k,0)$ term.

% It follows that
% \begin{align*}
%     \mb{E}_{\pi\in S_n}(\#\text{ of pairs }((i \ j), S)) &= \frac{1}{n!}(\#\text{ of triples }(\pi, (i \ j), S) \\
%     &= \frac{1}{n!}\cdot\binom{n}{2}\cdot(\#\text{ of pairs }(\pi', S')) \\
%     &= \frac{1}{n!}\cdot \binom{n}{2}\cdot (n-2)! \cdot \mb{E}_{\pi'\in S_{n-2}}(\#\text{ of subsequences }S'\text{ given }\pi') \\
%     &= \frac{1}{n!}\cdot \binom{n}{2}\cdot (n-2)!\cdot E(n-2,k,0) \\
%     &=\frac{1}{2} E(n-2,k,0).
% \end{align*}
% This explains the $\frac12 E(n-2,k,0)$ term.

\subsubsection*{Case 2: $i\in T$ and $j\ne i+1.$}

% Our other case is that $j\ne i+1$, which happens with probability $1-\frac{1}{n}$. 

In this case, we can imagine removing both $i$ and $j$ and then relabeling to get a new permutation $\pi'\in S_{n-2}$ with a corresponding increasing subsequence $T'$ of length $k-1.$ (We will explain soon the reason for the restriction $j\ne i+1.$)

\begin{example}
Choose the same $\pi$ and $T$ as in the previous two examples, but now let $(i \ j) = (3 \ 5):$
$$\pi = \begin{pmatrix}
1 & \boxed{2} & \boxed{\tc{red}{3}} & 4 & \tc{red}{5} & \boxed{6} & 7 & \boxed{8} \\
4 & \boxed{2} & \boxed{\tc{red}{5}} & 1 & \tc{red}{3} & \boxed{7} & 6 & \boxed{8} \end{pmatrix}.$$
Removing 3 and 5 gives
$$\begin{pmatrix}
1 & \boxed{2} & 4 & \boxed{6} & 7 & \boxed{8} \\
4 & \boxed{2} & 1 & \boxed{7} & 6 & \boxed{8} \end{pmatrix},$$ and relabeling in the same manner as before gives
$$\pi' = \begin{pmatrix}
1 & \boxed{2} & 3 & \boxed{4} & 5 & \boxed{6} \\
3 & \boxed{2} & 1 & \boxed{5} & 4 & \boxed{6} \end{pmatrix},$$ so $\pi'$ is an element of $S_{n-2} = S_6$ and $T' = (2,4,6)$ has length $k-1 = 3.$
\end{example}

The expected number of possible increasing subsequences $T'$ of length $k-1$ over all choices of $\pi'\in S_{n-2}$ is $E(n-2,k-1,0).$ In this case, the number of choices for $\pi$ given $\pi'$ is not always the same, so we will instead compute the expected number of choices of $\pi$ over all pairs $(\pi',T').$ 

In total, there are $(n-1)^2$ choices for where to add $i$ and $j,$ since there are $n-1$ available slots, and we can imagine independently choosing slots for $i$ and $j$ and then setting $i=j+1$ if they happen to both be inserted in the same slot. Note that $i$ and $j$ are distinguishable here, since $i$ will be in $T$ while $j$ will not. The fact that we can now think of $i$ and $j$ as being added independently is the reason for the restriction $j\ne i+1.$

\begin{example}
With $\pi'$ as before, suppose we try to insert both $i$ and $j$ between the 2 and the 3. Since we cannot have $j=i+1,$ we would set $j=3$ and $i=4.$ Since $i$ is supposed to be in $T,$ this gives
$$\pi = \begin{pmatrix}
1 & \boxed{2} & \tc{red}{3} & \boxed{\tc{red}{4}} & 5 & \boxed{6} & 7 & \boxed{8} \\
5 & \boxed{2} & \tc{red}{4} & \boxed{\tc{red}{3}} & 1 & \boxed{7} & 6 & \boxed{8}
\end{pmatrix},$$ with $T=(2,4,6,8).$ This choice of $(i \ j)$ happens to work, since $T$ is indeed an increasing subsequence of $\pi.$ However, if we had instead chosen to insert $i$ after the 2 but $j$ after the 6, we would get 
$$\pi = \begin{pmatrix}
1 & \boxed{2} & \boxed{\tc{red}{3}} & 4 & \boxed{5} & 6 & \boxed{7} & \tc{red}{8} \\
4 & \boxed{2} & \boxed{\tc{red}{8}} & 1 & \boxed{6} & 5 & \boxed{7} & \tc{red}{3}
\end{pmatrix},$$ which would not work, since $T=(2,3,5,7)$ does not give an increasing subsequence in $\pi.$
\end{example}

On average, $i$ and $j$ are each equally likely to be in any of the $k$ intervals between elements of $T',$ since these intervals all have the same average size. In order to get an increasing subsequence $T$ of length $k$ containing $i$ once $i$ and $j$ are added, they would need to both be in the same interval between elements of $T',$ which happens with probability $\frac1k.$ Thus, the expected number of choices of $\pi$ and $T$ given $\pi'$ and $T'$ is $\frac{(n-1)^2}{k}.$ So, to get the expected value over $\pi\in S_n$ from the expected value over $\pi \in S_{n-1},$ we need to multiply by $\frac{(n-1)^2}{k},$ and we also need to multiply by $\frac{(n-2)!}{n!}$ since there is a $\frac{1}{(n-2)!}$ in the expected value over $S_{n-2}$ and a $\frac{1}{n!}$ in the expected value over $S_n.$ In total, we multiply by $\frac{(n-1)^2}{k}\cdot \frac{1}{n(n-1)} = \frac1k(1-\frac1n),$ which explains the $\frac1k(1-\frac1n)E(n-2,k-1,0)$ term.

\subsubsection*{Case 3: $i\in T$ and $j = i+1.$}

% Now suppose we want exactly one of $i$ and $j$ (say $i$) to be in the subsequence. There is a $\frac{1}{n}$ chance that $j=i+1$ (since there are $n-1$ ways this can happen out of $n(n-1)$ total ways to choose $i$ and $j$). 

In this case, instead of removing $j,$ we can think of merging $i$ and $j$ into a single fixed point to create a new permutation $\pi'\in S_{n-1}$ with a corresponding subsequence $S'$ containing the fixed point $i,$ as illustrated in the example below.

\begin{example}
We will use the same values for $n, k, \pi,$ and $T$ as in the previous example, but now let $(i \ j) = (6 \ 7),$ so $i\in T:$
$$\pi = \begin{pmatrix}
1 & \boxed{2} & \boxed{3} & 4 & 5 & \boxed{\tc{red}{6}} & \tc{red}{7} & \boxed{8} \\
4 & \boxed{2} & \boxed{5} & 1 & 3 & \boxed{\tc{red}{7}} & \tc{red}{6} & \boxed{8} \end{pmatrix}.$$
In this case, instead of removing $i$ and $j,$ we merge them into a single fixed point:
$$\begin{pmatrix}
1 & \boxed{2} & \boxed{3} & 4 & 5 & \boxed{\tc{red}{6}} & \boxed{8} \\
4 & \boxed{2} & \boxed{5} & 1 & 3 & \boxed{\tc{red}{6}} & \boxed{8} \end{pmatrix}.$$
Then, we relabel the elements after that fixed point to get a permutation $\pi'\in S_{n-1} = S_7:$
$$\pi' = \begin{pmatrix}
1 & \boxed{2} & \boxed{3} & 4 & 5 & \boxed{\tc{red}{6}} & \boxed{7} \\
4 & \boxed{2} & \boxed{5} & 1 & 3 & \boxed{\tc{red}{6}} & \boxed{7} \end{pmatrix}.$$
The resulting subsequence is $T' = (2,3,6,7),$ containing the distinguished fixed point $i=6.$
\end{example}

The expected number of such subsequences $T'$ over all $\pi' \in S_{n-1}$ is $E(n-1,k,1)$, since $S'$ is required to contain a chosen fixed point $i.$ There is only one choice of $\pi$ for each $\pi',$ so this gives the $\frac{1}{n}E(n-1,k,1)$ term, where the $\frac1n$ is because the expected value over $\pi \in S_n$ involves a $\frac{1}{n!}$ while the expected value over $\pi'\in S_{n-1}$ involves a $\frac{1}{(n-1)!}.$

% Where we add $i$ and $j$ back now uniquely determines the original permutation, because if $i$ and $j$ are added in the same position, we know $i$ is larger since we assumed $j\ne i+1$. The probability that we get an increasing subsequence is now $\frac{1}{k}$, since there are $k$ possible intervals between the $k-1$ elements of the subsequence, and they are all equally large on average, so there is a $\frac{1}{k}$ chance that $i$ and $j$ fall in matching intervals. This gives the $\frac{1}{k}(1-\frac{1}{n})E(n-2,k-1,0)$ term.
\end{proof}

It now remains to compute $E(n,k,r)$. We first show that it can be written as a sum as follows:

\begin{lemma}\label{lem:E(n,k,r)_sum}
For $n\ge k,$ the expected values $E(n,k,r)$ are given by $$E(n,k,r) = \frac{1}{P(n,k)}\sum_{n_1+\dots+n_{r+1}=n-r}\ \ \sum_{k_1+\dots+k_{r+1}=k-r}\ \ \prod_{i=1}^{r+1} \binom{n_i}{k_i}^2.$$
\end{lemma}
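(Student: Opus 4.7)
The plan is to reinterpret $E(n,k,r)$ as a normalized count of triples and evaluate it by swapping the order of summation. By definition,
\[
E(n,k,r) = \frac{1}{n!}\,\#\{(\pi,R,T) : \pi\in S_n,\ R\subseteq \mathrm{Fix}(\pi),\ |R|=r,\ R \subseteq T,\ T \text{ an increasing subsequence of length }k\text{ in }\pi\}.
\]
Since $P(n,k)=n!/(n-k)!$, it suffices to show that this count equals $(n-k)!$ times the advertised double sum. I would sum first over $R$, then over $T$, and finally over compatible $\pi$.

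The key combinatorial observation is that an $r$-subset of fixed points $R=\{p_1<\cdots<p_r\}$ simultaneously partitions the remaining positions \emph{and} the remaining values into $r+1$ blocks of common sizes $n_i = p_i - p_{i-1} - 1$ (with conventions $p_0=0$ and $p_{r+1}=n+1$). Choosing such an $R$ is equivalent to choosing a composition $(n_1,\ldots,n_{r+1})$ of $n-r$. Because $R$ is already an increasing subsequence, any increasing $T\supseteq R$ of length $k$ must place each element of $T\setminus R$ at a position in some block $i$ and with a value in the same value block $i$; this decomposes $T\setminus R$ according to a composition $(k_1,\ldots,k_{r+1})$ of $k-r$ shared by positions and values.

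For fixed $R$ and fixed $(k_1,\ldots,k_{r+1})$, I next count compatible data. Choosing the positions of $T\setminus R$ within each block contributes $\prod_{i=1}^{r+1}\binom{n_i}{k_i}$, and choosing the values (which are then placed in forced increasing order of position) contributes another $\prod_{i=1}^{r+1}\binom{n_i}{k_i}$. After this, the remaining $n-k$ positions can be filled bijectively with the remaining $n-k$ values in $(n-k)!$ ways, with no further constraint. Multiplying these contributions, summing over the two sets of compositions, and dividing by $n!$ yields the formula, with the factor $(n-k)!/n!$ becoming $1/P(n,k)$. The only conceptual step is the block-alignment observation producing the squared binomial $\binom{n_i}{k_i}^2$; the rest is routine enumeration, and terms with $k_i>n_i$ vanish automatically so I can sum over nonnegative compositions without worrying about additional range constraints.
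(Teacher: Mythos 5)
Your proposal is correct and follows essentially the same route as the paper: both identify $R$ with a composition $(n_1,\dots,n_{r+1})$ of $n-r$, observe that the fixed points force the positions and values of $T\setminus R$ into matching blocks, and obtain the squared binomials by choosing positions and values independently within each block. The only cosmetic difference is bookkeeping — you count completed permutations (picking up a factor $(n-k)!$ and dividing by $n!$), while the paper computes, for each pair $(R,T)$, the probability over the $P(n,k)$ possible images of $T$ alone; these are equivalent.
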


\begin{proof}
For each pair of subsets $R \se T \se [n]$ with $|R| = r$ and $|T| = k,$ define a random variable $X_{R,T}(\pi)$ varying over $\pi \in S_n$ to equal 1 if $T$ in $\pi$ is an increasing subsequence and $R$ is a set of fixed points contained in $T$, and 0 otherwise. By linearity of expectations, $E(n,k,r) = \sum_{R,T} \mb{E}_{\pi \in S_n}(X_{R,T}(\pi)).$

The expected value of $X_{R,T}(\pi)$ represents probability over $\pi \in S_n$ that $T$ forms an increasing subsequence in $\pi$ and all elements of $R$ are fixed points of $\pi$. To compute this probability, we will ignore where $\pi$ sends the elements outside $T,$ and only consider where $\pi$ maps the elements of $T.$ The total number of ways to choose where the $k$ elements of $T$ could map under $\pi$ is $P(n,k)$, so that gives our denominator. It remains to find the number of ways $\pi$ could map $T$ such that all values in $R$ map to themselves and $T$ forms an increasing subsequence in $\pi.$

We will illustrate the terms of $T$ as being boxed and the fixed points in $R$ as red, as in our previous examples. Choose $k_1,k_2,\dots,k_{r+1}$ such that list of elements of $T$ in increasing order can be written as $$\boxed{k_1\tn{ terms}} \ \ \boxed{\tn{\tc{red}{fixed point}}} \ \ \boxed{k_2\tn{ terms}} \ \ \boxed{\tn{\tc{red}{fixed point}}} \ \ \dots \ \ \boxed{\tn{\tc{red}{fixed point}}} \ \ \boxed{k_{r+1}\tn{ terms}},$$ and choose $n_1,n_2,\dots,n_{r+1}$ such that the entire sequence $1,2,\dots,n$ in increasing order can be written as $$n_1\tn{ terms} \ \ \boxed{\tn{\tc{red}{fixed point}}} \ \ n_2\tn{ terms} \ \ \boxed{\tn{\tc{red}{fixed point}}} \ \  \dots \ \ \boxed{\tn{\tc{red}{fixed point}}} \ \ n_{r+1}\tn{ terms}.$$ Since there are $r$ fixed points, we must have $n_1+\dots+n_{r+1}=n-r$ and $k_1+\dots+k_{r+1}=k-r.$ 

% \begin{example}
% Let $n = 10,$ $k = 6,$ $r = 2,$ $T = (1,3,5,6,8,10),$ $R = \{3,8\},$ and 
% $$\pi = \begin{pmatrix}
% \boxed{1} & 2 & \boxed{\tc{red}{3}} & 4 & \boxed{5} & \boxed{6} & 7 & \boxed{\tc{red}{8}} & 9 & \boxed{10} \\
% \boxed{2} & 10 & \boxed{\tc{red}{3}} & 7 & \boxed{4} & \boxed{5} & 1 & \boxed{\tc{red}{8}} & 6 & \boxed{9}
% \end{pmatrix}.$$
% Counting the boxed terms between the red fixed points, we get $k_1=1,k_2=2,$ and $k_3 = 1,$ and counting all terms between the red fixed points, we get $n_1 = 2, n_2 = 4,$ and $n_3 = 2.$ We see that $k_1 + k_2 + k_3 = 4 = 6-2$ and $n_1 + n_2 + n_3 = 8 = 10 - 2.$
% \end{example}

Now suppose we fix the values of $k_1, \dots, k_{r+1}$ and $n_1, \dots, n_{r+1}$ (which means fixing $R$, since choosing $R$ is equivalent to choosing $n_1,\dots,n_{r+1}$) and we would like to count the number of ways to choose $T$ and $\pi(T)$ such that $\pi$ fixes all values in $R$ and $T$ is an increasing subsequence in $\pi.$ There are $\prod_{i=1}^{r+1} \binom{n_i}{k_i}$ ways to choose the remaining elements of $T$ in between the fixed points, because in the $i$th interval between fixed points, there are $n_i$ values to choose from and we must choose $k_i$ of them to be in the subsequence. Since we want the subsequence to be increasing, we have exactly the same number of choices for the images of these numbers under $\pi$. Thus, the number of ways to choose the rest of the subsequence $T$ and its image under $\pi$ is $\prod_{i=1}^{r+1} \binom{n_i}{k_1}^2.$

Summing over all possible choices of the $n_i$'s and $k_i$'s and dividing by $P(n,k)$ gives the desired formula.
\end{proof}

This sum formula will actually be sufficient to prove Conjecture ~\ref{conj:pos} for the case $\lambda=(1)$, but to get the closed forms and the proofs in the other cases, we will need the following closed form for $E(n,k,r)$:

\begin{lemma}\label{lem:E(n,k,r)}
For $n\ge k$, $E(n,k,r)$ has the closed form $$E(n,k,r) = \frac{2^{k-r}}{(r-1)!!(2k-r)!!}\binom{n-\frac{r}{2}}{k-r}.$$
\end{lemma}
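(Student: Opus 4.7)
The plan is to evaluate the double sum of Lemma~\ref{lem:E(n,k,r)_sum} by a bivariate generating function computation. Setting $f(n,k,r):=E(n,k,r)\,P(n,k)$, the sum in that lemma exhibits $f(n,k,r)$ as an $(r+1)$-fold convolution of $\binom{n_i}{k_i}^2$'s with total indices shifted by $r$, so
\[
\sum_{n,k\ge 0} f(n,k,r)\,x^n y^k \;=\; x^r y^r\,G(x,y)^{r+1},\qquad G(x,y):=\sum_{n,k\ge 0}\binom{n}{k}^2 x^n y^k.
\]
The classical identity $\sum_{k}\binom{n}{k}^2 y^k = (1-y)^n P_n((1+y)/(1-y))$ (with $P_n$ the Legendre polynomial), combined with the Legendre generating function $\sum_n P_n(t)z^n=(1-2tz+z^2)^{-1/2}$, yields $G(x,y)=(1-2x(1+y)+x^2(1-y)^2)^{-1/2}$. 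In particular $G^{r+1}$ is the Gegenbauer generating function under the substitution $z=x(1-y)$, $t=(1+y)/(1-y)$, and extracting $[x^{n-r}]$ gives
\[
[x^{n-r}]\,G^{r+1} \;=\; (1-y)^{n-r}\,C_{n-r}^{((r+1)/2)}\!\left(\frac{1+y}{1-y}\right).
\]

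To extract $[y^{k-r}]$ in closed form I would convert the Gegenbauer polynomial into an ultraspherical Jacobi polynomial via $C_m^{(\alpha)}(t)=\frac{(2\alpha)_m}{(\alpha+1/2)_m}P_m^{(\alpha-1/2,\alpha-1/2)}(t)$ and then apply the factored expansion
\[
(1-y)^m\,P_m^{(\beta,\beta)}\!\left(\frac{1+y}{1-y}\right)\;=\;\sum_{j\ge 0}\binom{m+\beta}{m-j}\binom{m+\beta}{j}\,y^j,
\]
which is a direct consequence of the standard explicit form of $P_m^{(\alpha,\beta)}$. Specializing $m=n-r$, $\beta=r/2$ and reading off the coefficient of $y^{k-r}$ produces
\[
f(n,k,r)\;=\;\frac{(r+1)_{n-r}}{(r/2+1)_{n-r}}\,\binom{n-r/2}{n-k}\binom{n-r/2}{k-r}.
\]

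A routine $\Gamma$-function manipulation, splitting by the parity of $r$ and using $\Gamma(s+1/2)=(2s)!\sqrt{\pi}/(4^s s!)$, reduces the prefactor $\frac{(r+1)_{n-r}}{(r/2+1)_{n-r}}\binom{n-r/2}{n-k}$ to $\frac{2^{k-r}P(n,k)}{(r-1)!!(2k-r)!!}$; dividing by $P(n,k)$ then recovers the claimed closed form for $E(n,k,r)$.

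The main obstacle is recognizing the ultraspherical Jacobi representation: without the observation that the Gegenbauer polynomial with parameter $(r+1)/2$ corresponds (after a Pochhammer rescaling) to a Jacobi polynomial with equal parameters $\alpha=\beta=r/2$, the $[y^{k-r}]$ extraction leaves a hypergeometric double sum rather than a clean product of two binomials. Once this factorization is in hand the remainder is a direct computation, though the half-integer Pochhammer symbols appearing when $r$ is odd require careful bookkeeping.
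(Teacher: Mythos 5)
Your proposal is correct, and it takes a genuinely different route from the paper. The paper first establishes the $r=1$ case by deriving the closed form $F(x,y)=(1-2x^2(1+y^2)+x^4(1-y^2)^2)^{-1/2}$ via the Narayana-number generating function, and then handles general $r$ by a double induction on $n$ that requires introducing an auxiliary quantity $E'(n,k,r)$, guessing its closed form, and verifying two coupled Pascal-identity recurrences. You instead obtain the same closed form for $\sum_{n,k}\binom{n}{k}^2x^ny^k$ via the Legendre representation $\sum_k\binom{n}{k}^2y^k=(1-y)^nP_n\bigl((1+y)/(1-y)\bigr)$, and then extract the coefficient of $x^{n-r}y^{k-r}$ from the $(r+1)$-st power \emph{directly for all $r$ at once}, by recognizing the power as a Gegenbauer generating function, converting $C_{n-r}^{((r+1)/2)}$ to the ultraspherical Jacobi polynomial $P_{n-r}^{(r/2,r/2)}$, and using the explicit Jacobi expansion to produce the product $\binom{n-r/2}{n-k}\binom{n-r/2}{k-r}$. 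I checked the final prefactor reduction $\frac{(r+1)_{n-r}}{(r/2+1)_{n-r}}\binom{n-r/2}{n-k}=\frac{2^{k-r}P(n,k)}{(r-1)!!\,(2k-r)!!}$ in both parities of $r$ and it does hold, so the argument is complete modulo writing out that computation. What your approach buys is uniformity: no induction, no auxiliary $E'$, and the source of the ``half-integer'' binomial $\binom{n-r/2}{k-r}$ becomes conceptually transparent (it is a Jacobi coefficient with parameter $\beta=r/2$). What it costs is reliance on classical special-function identities that the paper's self-contained, elementary (if longer) induction avoids.
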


The proof of this formula is left for Appendix \ref{app:E(n,k,r)_proof}. We will note that in the case $r=1$, we get
\begin{equation}
    E(n,k,1) = \frac{2^{k-1}(2n-1)(2n-3)\dots(2n-2k+3)}{(2k-1)!},
    \label{eqn:E(n,k,1)}
\end{equation}
which we will make use of in Theorem \ref{thm:sig=id}.

\subsection{Closed forms for the polynomials}

We can now use Lemmas \ref{lem:m1Cr}, \ref{lem:m2}, and \ref{lem:E(n,k,r)} together with the character polynomial formula (Theorem \ref{thm:char_poly}) to derive closed forms for $a_{\id_k}^\lambda(n)$ for $|\lambda| \le 2$:

\begin{theorem}\label{thm:sig=id}
We have the following closed forms for $a_{\id_k}^{\lambda}(n)$ for $\lambda=\emptyset,(1),(2),$ and $(1,1):$
\begin{align*}
    a_{\id_k}^\emptyset(n) &= \frac{1}{k!}\binom{n}{k}, \\ \\
    a_{\id_k}^{(1)}(n) &= \frac{2^{k-1}(2n-1)(2n-3)\dots(2n-2k+3)}{(2k-1)!} - \frac{1}{k!}\binom{n-1}{k-1}, \\ \\
    a_{\id_k}^{(2)}(n) &= -\frac{1}{n\cdot k!}\binom{n-2}{k-1} + \frac{1}{2(k-1)!}\binom{n-1}{k-2} \\
    &- \frac{2^{k-1}((2k-4)n+(2k-1))\cdot(2n-3)(2n-5)\dots(2n-2k+3)}{n\cdot(2k-1)!}, \\ \\
    a_{\id_k}^{(1,1)}(n) &= \frac{1}{k!}\binom{n-2}{k-2} + \frac{1}{n\cdot k!}\binom{n-2}{k-1} + \frac{1}{2(k-1)!}\binom{n-1}{k-2} \\
    &- \frac{2^{k-1}(2kn - (2k-1))\cdot(2n-3)(2n-5)\dots(2n-2k+3)}{n\cdot (2k-1)!}.
\end{align*}

Furthermore we have $a^{\lambda}_{\id_k}(n)=\alpha^{\lambda}_{\id_k,n}$ when $n\ge 0$ for $\lambda=\emptyset$, when $n\ge k$ for $\lambda=(1)$, and when $n\ge k+1$ for $\lambda=(2),(1,1).$ (For $k = 2$, the products $(2n-3)(2n-5)\dots(2n-2k+3)$ appearing in the formulas for $a_{\id_k}^{(2)}(n)$ and $a_{\id_k}^{(1,1)}(n)$ should be interpreted as equaling 1.)
\end{theorem}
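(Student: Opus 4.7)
The plan is to apply the character polynomial formulas \eqref{eqn:chi_emptyset}--\eqref{eqn:chi(1,1)} to rewrite each inner product $\alpha^{\lambda}_{\id_k,n} = \langle \chi^{\lambda[n]}, M_{\id_k,n}\rangle$ as a $\mathbb{Z}$-linear combination of inner products of the form $\langle \binom{m_1}{r}, M_{\id_k,n}\rangle$ and $\langle m_2, M_{\id_k,n}\rangle$. I then invoke Lemmas \ref{lem:m1Cr} and \ref{lem:m2} to rewrite each of these in terms of the expected values $E(n', k', r')$, and finally apply the closed form of Lemma \ref{lem:E(n,k,r)} and simplify. For $\lambda = \emptyset$ this is immediate: since $\chi^{(n)} = 1$, the inner product equals $E(n,k,0)$, which by Lemma \ref{lem:E(n,k,r)} (with the convention $(-1)!! = 1$) gives $\frac{1}{k!}\binom{n}{k}$.

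For $\lambda = (1)$, substituting $\chi^{(n-1,1)} = m_1 - 1$ and applying Lemma \ref{lem:m1Cr} with $r=1$ yields
$$a^{(1)}_{\id_k}(n) = E(n-1,k,0) + E(n,k,1) - E(n,k,0).$$
The Pascal identity $\binom{n}{k} - \binom{n-1}{k} = \binom{n-1}{k-1}$ collapses the two $r=0$ contributions, and $E(n,k,1)$ is given by \eqref{eqn:E(n,k,1)}; together these produce the stated closed form.

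The cases $\lambda = (2)$ and $\lambda = (1,1)$ follow the same template. Using \eqref{eqn:chi(2)} and \eqref{eqn:chi(1,1)}, I expand into inner products involving $\binom{m_1}{2}$, $m_2$, $m_1$, and $1$. Applying Lemma \ref{lem:m1Cr} for $r \in \{1,2\}$ together with Lemma \ref{lem:m2} produces an expression in
$$E(n,k,0),\ E(n-1,k,0),\ E(n-2,k,0),\ E(n,k,1),\ E(n-1,k,1),\ E(n,k,2),\ E(n-2,k-1,0),$$
each of which is then replaced by its closed form from Lemma \ref{lem:E(n,k,r)}. The terms naturally split into two families: those with $r \in \{0,2\}$, which yield ordinary binomial coefficients $\binom{n-a}{k-b}$ that combine via Pascal-type identities into the $\binom{n-2}{k-1}$, $\binom{n-2}{k-2}$, and $\binom{n-1}{k-2}$ summands of the claim; and those with $r=1$, which yield half-integer binomial coefficients and combine, after factoring out the common product $(2n-3)(2n-5)\cdots(2n-2k+3)$, into a single term with a linear-in-$n$ coefficient---namely $(2k-4)n + (2k-1)$ for $\lambda=(2)$ and $2kn-(2k-1)$ for $\lambda=(1,1)$. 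The agreement ranges come from intersecting the validity range $n \geq k$ of Lemma \ref{lem:E(n,k,r)} with the shifts $n-1, n-2$ in its arguments, and with the requirement $n \geq |\lambda|$ for the character polynomial formulas of Theorem \ref{thm:char_poly}.

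The main obstacle is purely one of bookkeeping: organizing the double-factorial prefactors $\frac{2^{k-r}}{(r-1)!!(2k-r)!!}$ and half-integer binomial coefficients $\binom{n-r/2}{k-r}$ of Lemma \ref{lem:E(n,k,r)} so that the two rational-function families cleanly align with the stated closed forms, and in particular verifying that the apparent poles at $n=0$ in individual terms cancel as they must for the final expressions to represent polynomials in $n$. Once the $r=1$ contributions are factored through the common product $(2n-3)\cdots(2n-2k+3)$ first and the $r \in \{0,2\}$ contributions are processed separately, the remaining algebra is routine.
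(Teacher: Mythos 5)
Your approach is exactly the paper's: expand $\chi^{\lambda[n]}$ via the character polynomials \eqref{eqn:chi_emptyset}--\eqref{eqn:chi(1,1)}, convert the resulting inner products to the expected values $E(n',k',r')$ via Lemmas~\ref{lem:m1Cr} and~\ref{lem:m2}, substitute the closed form of Lemma~\ref{lem:E(n,k,r)}, and simplify with Pascal-type identities; the list of expected values you identify and the way you split the $r=1$ (half-integer) contributions from the $r\in\{0,2\}$ contributions match the paper's computation, and the closed forms would come out correctly.

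There is, however, one genuine gap: your justification of the agreement ranges in the ``Furthermore'' clause does not reach the ranges actually claimed. Intersecting the validity range $n\ge k$ of Lemma~\ref{lem:E(n,k,r)} with the shifted arguments gives only $n\ge k+1$ for $\lambda=(1)$ (because of the term $E(n-1,k,0)$) and $n\ge k+2$ for $\lambda=(2),(1,1)$ (because of the terms $\frac{1}{2}E(n-2,k,0)$), whereas the theorem asserts $n\ge k$ and $n\ge k+1$ respectively. The missing step is a direct check at the boundary: for $\lambda=(1)$ at $n=k$, the out-of-range term is $E(k-1,k,0)=0$, which happens to agree with the polynomial value $\frac{1}{k!}\binom{k-1}{k}=0$; for $\lambda=(2)$ at $n=k+1$ the two $\frac{1}{2}E(n-2,k,0)$ terms sum to $E(k-1,k,0)=0$, again matching $\frac{1}{k!}\binom{k-1}{k}=0$; and for $\lambda=(1,1)$ the two $\pm\frac{1}{2}E(n-2,k,0)$ terms cancel outright. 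This is not merely cosmetic: the range $n\ge k+1$ for $\lambda=(2),(1,1)$ is used later in the proof of Theorem~\ref{thm:pos}, where the value $n=k+1$ would otherwise be uncovered (the separate regular-representation argument only handles $n=k$). Adding these boundary verifications closes the gap.
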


\begin{proof}
We will consider each case separately.

\subsubsection*{Case $\lambda=\emptyset$:}

This formula is already known, and can be seen directly by noting that each of the $\binom{n}{k}$ subsequences of length $k$ in $\pi$ is equally likely to be in any order, and thus has a $\frac{1}{k!}$ chance of being increasing. This argument is valid for any $n\ge 0.$ (In the cases where $n<k$, both the polynomial and the coefficient are 0.)

\subsubsection*{Case $\lambda=(1):$}

By (\ref{eqn:chi(1)}), the character polynomial for $\lambda=(1)$ is $m_1-1.$ Using this and Lemma \ref{lem:m1Cr}, we get that for $n\ge 2,$
\begin{align*}
    \langle \chi^{(n-1,1)}, M_{\id_k,n}\rangle = \langle m_1,M_{\id_k,n}\rangle - \langle 1, M_{\id_k,n}\rangle = E(n,k,1) + E(n-1,k,0) - E(n,k,0).
\end{align*}
Plugging in the formula for $E(n,k,1)$ from (\ref{eqn:E(n,k,1)}) and the formula $E(n,k,0) = \frac{1}{k!}\binom{n}{k}$ and applying Pascal's identity to the last two terms shows that the desired polynomial formula holds for $n\ge k+1$.

To show that it also holds for $n=k$, the only term to which Lemma \ref{lem:E(n,k,r)} does not apply is $E(n-1,k,0)$, so we must check that the polynomial actually agrees with the expected value in this case, which it does since $E(k-1,k,0)=0$ (as there are no length $k$ subsequences in a permutation of length $k-1$), and the polynomial formula gives $\frac{1}{k!}\binom{k-1}{k}$, which is also 0. Thus, the formula for $a_{\id_k}^{(1)}(n)$ is valid for all $n\ge k.$

\subsubsection*{Case $\lambda=(2):$}

By (\ref{eqn:chi(2)}), the character polynomial for $\lambda=(2)$ is $\binom{m_1}{2}-m_1+m_2.$ Using this together with Lemmas~\ref{lem:m1Cr} and \ref{lem:m2}, we get for $n \geq 4$ that the character polynomial formula holds, and so:
\begin{align*}
    \langle \chi^{(n-2,2)}, M_{\id_k,n}\rangle &= \left\langle \binom{m_1}{2}, M_{\id_k,n}\right\rangle - \langle m_1, M_{\id_k,n}\rangle + \langle m_2, M_{\id_k,n}\rangle \\
    &= \frac{1}{2}E(n-2,k,0) + E(n-1,k,1) + E(n,k,2) \\
    &- E(n-1,k,0) - E(n,k,1) \\
    &+ \frac{1}{2}E(n-2,k,0) + \frac{1}{k}\left(1-\frac{1}{n}\right)E(n-2,k-1,0) + \frac{1}{n}E(n-1,k,1).
\end{align*}
Plugging in our formula from Theorem \ref{lem:E(n,k,r)} for $E(n,k,r)$ (using the form of $E(n,k,1)$ from (\ref{eqn:E(n,k,1)})) implies that for $n\ge k+2,$
\begin{align*}
    \langle \chi^{(n-2,2)}, M_{\id_k,n}\rangle &= \frac{1}{2\cdot k!}\binom{n-2}{k} + \frac{2^{k-1}(2n-3)(2n-5)\dots(2n-2k+1)}{(2k-1)!} + \frac{1}{2(k-1)!}\binom{n-1}{k-2} \\
    &- \frac{1}{k!}\binom{n-1}{k} - \frac{2^{k-1}(2n-1)(2n-3)\dots(2n-2k+3)}{(2k-1)!} \\
    &+ \frac{1}{2\cdot k!}\binom{n-2}{k} + \left(1-\frac{1}{n}\right)\frac{1}{k!}\binom{n-2}{k-1}+\frac{1}{n}\cdot\frac{2^{k-1}(2n-3)(2n-5)\dots(2n-2k+1)}{(2k-1)!}.
\end{align*}
We can cancel several terms using Pascal's identity: $$\left(2\cdot\frac{1}{2\cdot k!}\binom{n-2}{k}-\frac{1}{k!}\binom{n-1}{k}\right)+\frac{1}{k!}\binom{n-2}{k-1} = -\frac{1}{k!}\binom{n-2}{k-1}+\frac{1}{k!}\binom{n-2}{k-1}=0.$$ We can also combine the terms with denominator $(2k-1)!$ and factor out the common parts to get $$(n(2n-2k+1) - n(2n-1) + (2n-2k+1))\cdot\frac{2^{k-1}(2n-3)\dots(2n-2k+3)}{n\cdot(2k-1)!},$$ which simplifies to $$-\frac{2^{k-1}((2k-4)n +(2k-1))\cdot(2n-3)(2n-5)\dots(2n-2k+3)}{(2k-1)!}.$$ Putting all this together gives the claimed formula.

We now know that $\alpha_{\id_k,n}^{(2)}$ agrees with the claimed polynomial for $n\ge k+2$, so it remains to check $n=k+1$. In that case, Lemma \ref{lem:E(n,k,r)} applies to all the terms of the expansion except possibly the two $\frac{1}{2}E(n-2,k,0)$ terms, which sum to $E(n-2,k,0)$, so it suffices to show that these two terms agree with the polynomial when $n=k+1$. But that term becomes $E(k-1,k,0)=0$ when $n=k+1$, which, like in the $\lambda=(1)$ case, agrees with the polynomial $\frac{1}{k!}\binom{k-1}{k}=0$.

\subsubsection*{Case $\lambda=(1,1):$}

By (\ref{eqn:chi(1,1)}), the character polynomial for $\lambda=(1,1)$ is $\binom{m_1}{2}-m_1-m_2+1.$ Using this together with Lemmas \ref{lem:m1Cr} and \ref{lem:m2}, we get for $n\geq 4$ that the character polynomial formula holds, and so:
\begin{align*}
    \langle \chi^{(n-2,1,1)}, M_{\id_k,n}\rangle &= \left\langle \binom{m_1}{2}, M_{\id_k,n}\right\rangle - \langle m_1, M_{\id_k,n}\rangle - \langle m_2, M_{\id_k,n}\rangle + \langle 1, M_{\id_k,n}\rangle \\
    &= \frac{1}{2}E(n-2,k,0) + E(n-1,k,1) + E(n,k,2) \\
    &- E(n-1,k,0) - E(n,k,1) \\
    &- \frac{1}{2}E(n-2,k,0) - \frac{1}{k}\left(1-\frac{1}{n}\right)E(n-2,k-1,0) - \frac{1}{n}E(n-1,k,1) \\
    &+E(n,k,0).
\end{align*}
Plugging in the formulas from Lemma \ref{lem:E(n,k,r)} and (\ref{eqn:E(n,k,1)}), it follows that for $n\ge k+2,$
\begin{align*}
    \langle \chi^{(n-2,1,1)}, M_{\id_k,n}\rangle &= \frac{1}{2\cdot k!}\binom{n-2}{k} + \frac{2^{k-1}(2n-3)(2n-5)\dots(2n-2k+1)}{(2k-1)!} + \frac{1}{2(k-1)!}\binom{n-1}{k-2} \\
    &- \frac{1}{k!}\binom{n-1}{k} - \frac{2^{k-1}(2n-1)(2n-3)\dots(2n-2k+3)}{(2k-1)!} \\
    &- \frac{1}{2\cdot k!}\binom{n-2}{k} - \left(1-\frac{1}{n}\right)\frac{1}{k!}\binom{n-2}{k-1}-\frac{1}{n}\cdot\frac{2^{k-1}(2n-3)(2n-5)\dots(2n-2k+1)}{(2k-1)!} \\
    &+ \frac{1}{k!}\binom{n}{k}.
\end{align*}
We can cancel the $\pm\frac{1}{2\cdot k!}\binom{n-2}{k}$ terms, and combine several other terms using Pascal's identity to get $$-\frac{1}{k!}\binom{n-1}{k} + \frac{1}{k!}\binom{n}{k} - \frac{1}{k!}\binom{n-2}{k-1} = \frac{1}{k!}\binom{n-1}{k-1}-\frac{1}{k!}\binom{n-2}{k-1} = \frac{1}{k!}\binom{n-2}{k-2}.$$ We can also combine the terms with denominator $(2k-1)!$ to get $$(n(2n-2k+1) - n(2n-1) - (2n-2k+1))\cdot\frac{2^{k-1}(2n-3)(2n-5)\dots(2n-2k+3)}{n\cdot(2k-1)!},$$ which simplifies to $$-\frac{2^{k-1}(2kn-(2k-1))\cdot(2n-3)(2n-5)\dots(2n-2k+3)}{n\cdot(2k-1)!}.$$ Putting all this together gives the claimed formula for $a_{\id_k}^{(1,1)}(n)$ and proves that it agrees with $\alpha_{\id_k,n}^{(1,1)}$ for $n\ge k+2$, so it remains to consider $n=k+1$. In this case, the only terms to which Lemma \ref{lem:E(n,k,r)} does not apply are the $\pm \frac{1}{2}E(n-2,k,0)$ terms, and those terms cancel, so the formula is still valid for $n=k+1.$
\end{proof}

\begin{remark}
It follows from Theorem \ref{thm:gaetz-ryba-coefficients} that all four formulas in Theorem \ref{thm:sig=id} are polynomials in $n,$ but this is not obvious from the formulas for $a_{\id_k}^{(2)}(n)$ and $a_{\id_k}^{(1,1)}(n)$ since they involve $n$'s in the denominator. However, we can check that they are in fact polynomials in $n$ by showing in each case that the constant terms of the polynomials which are multiplied by $\frac1n$ cancel. For $a_{\id_k}^{(2)}(n),$ the relevant constant terms (coming from the first and third term of the formula) are $$-\frac{1}{\cdot k!}\cdot\frac{(-2)(-3)\dots(-k)}{(k-1)!} - \frac{2^{k-1}(2k-1)(-3)(-5)\dots(-2k+3)}{(2k-1)!}.$$ Collecting the negative signs and canceling common terms from the numerator and denominator in each fraction, this simplifies to $$\frac{(-1)^k}{(k-1)!} + \frac{2^{k-1}(-1)^{k-1}}{2\cdot 4 \cdot 6\cdot \dots \cdot(2k-1)} = 0.$$ Similarly, for $a_{\id_k}^{(1,1)}(n),$ the relevant constant terms (from the second and last terms of the formula) are $$\frac{1}{k!}\cdot\frac{(-2)(-3)\dots(-k)}{(k-1)!} - \frac{2^{k-1}(-2k+1)(-3)(-5)\dots(-2k+3)}{(2k-3)},$$ which is the same two terms as for $a_{\id_k}^{(2)}(n)$ but with the signs swapped, so again, the terms cancel. This shows that in both cases, the formulas do in fact give polynomials in $n,$ as expected.
\end{remark}

\subsection{Proof of the positivity conjecture for $\lambda=\emptyset,(1),(2)$, and $(1,1)$}

Now that we have the closed forms, we can use them to prove Theorem \ref{thm:pos}.

\begin{proof}[Proof of Theorem \ref{thm:pos}]
Again, we will consider each case separately. In each case, we will first prove that the polynomial has all roots real and less than $k$, so its sign never changes for $n\ge k$. 

Note next that for $n=k$, we get $\alpha_{\id_k,k}^\lambda\ge 0$ for all $\lambda$ since $M_{\id_k,k}$ has value 1 on the conjugacy class of the identity and 0 on all other conjugacy classes and is thus a scalar multiple of the character of the regular representation, implying that all characters have nonnegative coefficients. For $\lambda=\emptyset$ and $\lambda=(1)$, this is enough to imply that the coefficient is positive for all $n\ge k$: we know the coefficient agrees with the polynomial for all $n\ge k$ in those cases, and since the polynomial never switches sign for $n\ge k$, it must always be nonnegative.

For the cases $\lambda=(2)$ and $\lambda=(1,1)$, the above argument shows that the coefficient is nonnegative for $n=k$, and Theorem \ref{thm:sig=id} implies that the coefficient agrees with the polynomial for $n\ge k+1$, so for these cases we will complete the proof by showing that the leading coefficient is nonnegative, which together with the roots being less than $k$ implies the positivity for all $n\ge k+1$ and thus for all $n\ge k.$ 

\subsubsection*{Case $\lambda=\emptyset$:}

Since $a_{\id_k}^\emptyset(n)=\frac{1}{k!}\binom{n}{k}$, the $k$ roots in this case are $0,1,2,\dots,k-1,$ which are all real less than $k$. As explained above, that implies the positivity for this case.

\subsubsection*{Case $\lambda=(1):$}

\begin{lemma}\label{lem:a1_roots}
The $k-1$ roots of $a_{\id_k}^{(1)}(n)$ are $-1$ and a root between $i$ and $i+1$ for each $i=1,2,\dots,k-2.$
\end{lemma}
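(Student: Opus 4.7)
The plan is to exhibit $k-1$ distinct real roots of the polynomial $f(n) \coloneqq a_{\id_k}^{(1)}(n)$ at the claimed locations; since $f$ has degree at most $k-1$, these will account for all of its roots. The strategy is to evaluate $f$ directly at carefully chosen integer points, then invoke the intermediate value theorem.

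First, I would verify $f(-1)=0$ by direct computation. At $n=-1$ the product $(2n-1)(2n-3)\cdots(2n-2k+3)$ becomes $(-1)^{k-1}\cdot 3\cdot 5\cdots(2k-1)$, which equals $(-1)^{k-1}(2k-1)!/\bigl(2^{k-1}(k-1)!\bigr)$, so the first term of $f(-1)$ collapses to $(-1)^{k-1}/(k-1)!$. Likewise $\binom{-2}{k-1}=(-1)^{k-1}k$, so the second term also equals $(-1)^{k-1}/(k-1)!$, and the two cancel.

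Next, I would evaluate $f$ at each integer $i\in\{1,2,\ldots,k-1\}$. In this range $i-1<k-1$, so $\binom{i-1}{k-1}=0$ and $f(i)$ reduces to its first term. The factors of that product are $2i-(2j-1)$ for $j=1,\ldots,k-1$; each is a nonzero odd integer, positive exactly when $j\le i$ and negative otherwise. Hence $f(i)$ is nonzero with sign $(-1)^{k-1-i}$. The values $f(1),f(2),\ldots,f(k-1)$ therefore alternate in sign, and the intermediate value theorem produces a root of $f$ in each open interval $(i,i+1)$ for $i=1,\ldots,k-2$.

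Combined with the root at $-1$, this yields $k-1$ distinct real roots, which must exhaust the roots of $f$. The only delicate step is the sign and falling-factorial bookkeeping at $n=-1$; once the identities $3\cdot 5\cdots(2k-1)=(2k-1)!/\bigl(2^{k-1}(k-1)!\bigr)$ and $\binom{-2}{k-1}=(-1)^{k-1}k$ are in hand, everything else reduces to counting negative factors, and no substantial obstacle remains.
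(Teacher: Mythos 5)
Your proposal is correct and follows essentially the same route as the paper: verify $f(-1)=0$ by direct evaluation of both terms, observe that for integers $n=1,\dots,k-1$ the binomial term vanishes so $f$ reduces to the product term whose sign alternates, and apply the intermediate value theorem to locate a root in each interval $(i,i+1)$ for $i=1,\dots,k-2$. The only cosmetic difference is that you read off the signs at the integers by counting negative factors, whereas the paper reads off the sign changes from the half-integer roots $\tfrac12,\tfrac32,\dots,k-\tfrac32$ of the product term; these are the same argument.
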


\begin{proof}
Plugging in $n=-1$ to the formula from Theorem \ref{thm:sig=id} gives $$\frac{2^{k-1}(-3)(-5)\dots(-2k+1)}{(2k-1)!}-\frac{1}{k!}\cdot\frac{(-2)(-4)\dots(-k)}{(k-1)!} = \frac{(-1)^{k-1}}{(k-1)!}-\frac{(-1)^{k-1}}{(k-1)!}=0.$$ For the remaining roots, note that if we plug in $n=1,2,\dots,k-1$ to the formula from Theorem \ref{thm:sig=id}, the $\frac{1}{k!}\binom{n-1}{k-1}$ term is 0, so $$a_{\id_k}^{(1)}(n) = \frac{2^{k-1}(2n-1)(2n-3)\dots(2n-2k+3)}{(2k-1)!}.$$ The right hand side has roots at $n=\frac{1}{2},\frac{3}{2},\dots,k-\frac{3}{2}$, so it has a root between $i$ and $i+1$ for each $i=1,2,\dots,k-2$. This means $a_{\id_k}^{(1)}$ switches sign between $i$ and $i+1$ for each of these values of $i$, so it must have a root in each interval $(i,i+1)$. Since it has degree $k-1$, this accounts for all its roots.
\end{proof}

As explained above, Lemma \ref{lem:a1_roots} together with $a_{\id_k}^{(1)}(k)\ge 0$ implies the positivity of $a_{\id_k}^{(1)}(n)$ for all $n\ge k$. \\
\\
\indent Alternatively, for this case we can actually prove Conjecture \ref{conj:pos} directly from Lemma \ref{lem:m1Cr} and Lemma \ref{lem:E(n,k,r)_sum}, without using the closed form for $a_{\id_k}^{(1)}(n):$ 

\begin{proof}[Alternate proof of the case $\lambda=(1)$]
By (\ref{eqn:chi(1)}) and Lemma \ref{lem:m1Cr}, $$a_{\id_k}^{(1)}(n) = \langle m_1-1,M_{\id_k,1},n\rangle = E(n-1,k,1) + E(n-1,k,0) - E(n,k,0).$$ It follows from Lemma ~\ref{lem:E(n,k,r)_sum} and Pascal's identity that for $n\ge k$, this polynomial can be written as \begin{align*}
    a_{\id_k}^{(1)}(n) &= \frac{1}{P(n,k)}\sum_{k_1+k_2=k-1} \binom{n_1}{k_1}^2\binom{n_2}{k_2}^2 + \frac{1}{k!}\binom{n-1}{k} - \frac{1}{k!}\binom{n}{k} \\
    &= \frac{1}{P(n,k)}\sum_{k_1+k_2=k-1} \binom{n_1}{k_1}^2\binom{n_2}{k_2}^2 - \frac{1}{k!}\binom{n-1}{k-1}.
\end{align*} If we fix $n_1$ and $n_2$ with $n_1+n_2=n-1$, then by Vandermonde's identity, $\sum_{k_1+k_2=k-1} \binom{n_1}{k_1}\binom{n_2}{k_2}=\binom{n-1}{k-1}$, and using that together with Cauchy-Schwarz, $$k\cdot\sum_{k_1+k_2=k-1} \binom{n_1}{k_1}^2\binom{n_2}{k_2}^2 \ge \left(\sum_{k_1+k_2=k-1} \binom{n_1}{k_1}\binom{n_2}{k_2}\right)^2 = \binom{n-1}{k-1}^2.$$ Dividing both sides by $\frac{k}{n}P(n,k)=k!\binom{n-1}{k-1}$, moving all terms to the left, and then averaging over $n_1$ from 1 to $n$ implies $a_{\id_k}^{(1)}(n)\ge 0$ for $n\ge k$.
\end{proof}

\subsubsection*{Case $\lambda=(2):$}

\begin{lemma}\label{lem:a2_roots}
The $k-2$ roots of $a_{\id_k}^{(2)}(n)$ are $-1$ and a root between $i$ and $i+1$ for each $i=1,2,\dots,k-3.$
\end{lemma}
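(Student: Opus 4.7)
The plan mirrors the proof of Lemma \ref{lem:a1_roots}: substitute specific integer values of $n$ into the closed form from Theorem \ref{thm:sig=id}, exploit the vanishing of the binomial coefficients $\binom{n-2}{k-1}$ and $\binom{n-1}{k-2}$ to read off signs, and apply the intermediate value theorem. Since Theorem \ref{thm:gaetz-ryba-coefficients} gives $\deg a_{\id_k}^{(2)} \leq k-2$, producing $k-2$ distinct real roots of the required form will simultaneously verify the claim and account for all roots.

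First I would verify that $n = -1$ is a root by direct substitution. Using the extended binomials $\binom{-3}{k-1} = (-1)^{k-1}(k+1)k/2$ and $\binom{-2}{k-2} = (-1)^{k-2}(k-1)$, the first two terms of the formula combine into $(-1)^{k-1}/(k-1)!$. For the third term, at $n = -1$ the prefactor becomes $(2k-4)(-1) + (2k-1) = 3$ and the product $(-5)(-7)\cdots(-(2k-1))$ equals $(-1)^{k-2}(2k-1)!/(3 \cdot 2^{k-1}(k-1)!)$; after simplification the third term contributes $(-1)^{k}/(k-1)!$, which exactly cancels the first two.

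Next, for each integer $n = i$ with $2 \leq i \leq k-2$ (assuming $k \geq 4$), both $\binom{i-2}{k-1}$ and $\binom{i-1}{k-2}$ vanish, so only the third term survives. Since $(2k-4)i + (2k-1) > 0$, the sign of $a_{\id_k}^{(2)}(i)$ equals the negative of the sign of $\prod_{j=2}^{k-1}(2i - 2j + 1)$. Exactly $k-1-i$ factors in this product are negative (those with $j > i$), so the sign at $n = i$ is $(-1)^{k-i}$, which alternates as $i$ ranges over $\{2, \ldots, k-2\}$. This produces $k-4$ sign changes, hence $k-4$ real roots, one in each interval $(i, i+1)$ for $i = 2, \ldots, k-3$.

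Finally, I would evaluate $a_{\id_k}^{(2)}(1)$. Here only the second term vanishes (since $\binom{0}{k-2} = 0$ for $k \geq 3$). The first term contributes $(-1)^{k}/k!$, and, using the identity $(2k-1)! = (2k-1)(2k-3)(2k-5)!! \cdot 2^{k-1}(k-1)!$, the third term simplifies to $(-1)^{k-1}(4k-5)/\bigl((2k-1)(2k-3)(k-1)!\bigr)$. Combining over a common denominator, the numerator becomes $-(2k-1)(2k-3) + k(4k-5) = 3(k-1)$, yielding
\[
a_{\id_k}^{(2)}(1) \;=\; \frac{3 \, (-1)^{k-1}}{k \, (k-2)! \, (2k-1)(2k-3)},
\]
which has sign $(-1)^{k-1}$, opposite to the sign $(-1)^{k-2}$ at $n = 2$. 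This supplies one final sign change, in $(1, 2)$, so altogether we exhibit $1 + (k-4) + 1 = k-2$ distinct real roots, exhausting the degree. The main obstacle will be the double-factorial bookkeeping in this last step, where combining Terms 1 and 3 requires the cancellation to depend on the precise coefficients from Theorem \ref{thm:sig=id}; a secondary issue is the edge case $k = 3$, where there are no intermediate intervals and one simply checks $a_{\id_3}^{(2)}(n) = (n+1)/30$ has its only root at $-1$.
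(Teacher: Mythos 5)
Your proposal is correct and follows essentially the same route as the paper's proof: verify $n=-1$ by direct substitution, use the vanishing of the two binomial terms at $n=2,\dots,k-2$ to read off alternating signs from the surviving third term, and handle $n=1$ separately to obtain the sign flip in $(1,2)$. Your computation at $n=1$ is slightly more explicit (you derive the closed value $3(-1)^{k-1}/\bigl(k(k-2)!(2k-1)(2k-3)\bigr)$ where the paper only compares absolute values of the two surviving terms), but the argument is the same.
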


\begin{proof}
To show that $-1$ is a root, plugging in $n=-1$ to our formula from Theorem \ref{thm:sig=id} gives $$\frac{1}{k!}\cdot\frac{(-3)\dots(-k-1)}{(k-1)!}-\frac{2^{k-1}(-3)(-5)\dots(-2k+1)}{(-1)(2k-1)!} + \frac{1}{2(k-1)!}\cdot\frac{(-2)(-3)\dots(-k+1)}{(k-2)!}.$$ This simplifies to $$\frac{(-1)^{k-1}(k+1)}{2(k-1)!} + \frac{(-1)^{k-2}}{(k-1)!}+\frac{(-1)^{k-2}(k-1)}{2(k-1)!}=0.$$ For the remaining roots, note that the two binomial coefficient terms are 0 for $n=2,\dots,k-2,$ so for each of these values of $n,$ $$a_{\id_k}^{(2)}(n) = -\frac{2^{k-1}((2k-4)n+(2k-1))\cdot(2n-3)\dots(2n-2k+3)}{n\cdot(2k-1)!}.$$ This polynomial has roots at $-\frac{2k-1}{2k-4},\frac{3}{2},\frac{5}{2},\dots,k-\frac{3}{2}.$ Thus, for each $i=2,3,\dots,k-2$, $a_{\id_k}^{(2)}(i)$ and $a_{\id_k}^{(2)}(i+1)$ have opposite signs, so $a_{\id_k}^{(2)}$ has a root in the interval $(i,i+1)$. For $i=1$, the $\binom{n-1}{k-2}$ term is 0 and we get \begin{align*}
    a_{\id_k}^{(2)}(1) &= -\frac{1}{k!}\cdot\frac{(-1)(-2)\dots(-k+1)}{(k-1)!} -\frac{2^{k-1}(4k-5)\cdot(-1)(-3)\dots(-2k+5)}{(2k-1)!} \\
    &=-\frac{(-1)^{k-1}}{k!}-\frac{(4k-5)(-1)^{k-2}}{(2k-1)(2k-3)\cdot(k-1)!}
\end{align*}
These terms have opposite signs, but the second has larger absolute value since $$k(4k-5)=4k^2-5k > 4k^2 - 8k + 3 = (2k-1)(2k-3).$$ Thus, the sign of $a_{\id_k}^{(2)}(1)$ matches the sign of the second term, which implies that there is also a sign flip between $a_{\id_k}^{(2)}(1)$ and $a_{\id_k}^{(2)}(2)$, and therefore a root of $a_{\id_k}^{(2)}$ in the interval $(1,2)$. Since we know $a_{\id_k}^{(2)}$ has $k-2$ roots total, this accounts for all of them.
\end{proof}

We know the positivity holds for $n=k$, so to prove it for $n\ge k+1$, it suffices to show that the polynomial has positive leading coefficient, since we know that all its roots are less than $k$, and that the coefficient agrees with the polynomial for all $n\ge k+1.$

\begin{lemma}\label{lem:a2_leadcoeff}
The polynomial $a_{\id_k}^{(2)}(n)$ has nonnegative leading coefficient for all $k\ge 2$.
\end{lemma}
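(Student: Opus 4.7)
The plan is to extract the leading coefficient of $a_{\id_k}^{(2)}(n)$ directly from the closed form given in Theorem~\ref{thm:sig=id}. Since the polynomial has degree at most $k-2$, I will compute the coefficient of $n^{k-2}$ in each of the three summands. The first summand $-\frac{1}{n\cdot k!}\binom{n-2}{k-1}$ should contribute $-\frac{1}{k!(k-1)!}$; the second, $\frac{1}{2(k-1)!}\binom{n-1}{k-2}$, contributes $\frac{1}{2(k-1)!(k-2)!}$; and the third, whose numerator contains the $k-2$ linear factors $(2n-3),\ldots,(2n-2k+3)$ together with $(2k-4)n+(2k-1)$, contributes $-\frac{(k-2)\cdot 2^{2k-2}}{(2k-1)!}$ after dividing by $n$.

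My next step will be to combine these contributions and simplify. Multiplying the total leading coefficient by $k!(k-1)!$ and using the identity $\binom{2k-1}{k-1}=\frac{(2k-1)!}{k!(k-1)!}$, I expect to arrive at the clean factored expression
\[
(k-2)\left[\frac{k+1}{2}-\frac{2^{2k-2}}{\binom{2k-1}{k-1}}\right].
\]
When $k=2$ this vanishes, settling that boundary case (and matching the fact that $a_{\id_2}^{(2)}$ is the zero polynomial). For $k\geq 3$ the factor $(k-2)$ is positive, and the question reduces to proving the inequality $(k+1)\binom{2k-1}{k-1}\geq 2^{2k-1}$, which via $\binom{2k}{k}=2\binom{2k-1}{k-1}$ is equivalent to the central binomial estimate $(k+1)\binom{2k}{k}\geq 4^k$.

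The main obstacle is verifying this last inequality for all $k\geq 3$, and I intend to do so by induction on $k$. The base case $k=3$ is the arithmetic check $4\cdot 10\geq 32$. For the inductive step, the identity $\binom{2k-1}{k-1}/\binom{2k-3}{k-2}=2(2k-1)/k$ shows that the ratio of successive left-hand sides equals $\frac{2(k+1)(2k-1)}{k^2}$, which exceeds the ratio $4$ of successive right-hand sides precisely when $(k+1)(2k-1)\geq 2k^2$, i.e. whenever $k\geq 1$. Thus the inductive step always goes through. A one-line alternative, if preferred, is to invoke the classical estimate $\binom{2k}{k}\geq 4^k/(2\sqrt{k})$, which reduces the desired inequality to $(k+1)^2\geq 4k$, i.e. $(k-1)^2\geq 0$.
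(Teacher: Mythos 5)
Your proposal is correct and follows essentially the same route as the paper: it extracts the same three leading coefficients from the closed form in Theorem~\ref{thm:sig=id}, arrives at the same factored expression (equivalent to $\frac{k-2}{2(2k-1)!}\bigl((k+1)\binom{2k-1}{k-1}-2^{2k-1}\bigr)$), and proves the key inequality $(k+1)\binom{2k-1}{k-1}\ge 2^{2k-1}$ by the same ratio-based induction. The closing remark reducing it to the central binomial estimate $\binom{2k}{k}\ge 4^k/(2\sqrt{k})$ is a nice optional shortcut but does not change the substance.
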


\begin{proof}
The three terms in the formula from Theorem~\ref{thm:sig=id} all have degree $k-2$, and adding together their leading coefficients gives that the leading coefficient of $a_{\id_k}^{(2)}(n)$ is $$\frac{1}{2(k-1)!(k-2)!} - \frac{1}{(k-1)!k!} - \frac{2^{2k-2}(k-2)}{(2k-1)!}.$$ This is equal to $$\frac{1}{(2k-1)!}\left(\binom{k}{2}\binom{2k-1}{k-1}-\binom{2k-1}{k-1} - 2^{2k-2}(k-2)\right)$$ $$= \frac{k-2}{2(2k-1)!}\left((k+1)\binom{2k-1}{k-1}-2^{2k-1}\right).$$ For $k=2$, this is 0, and for $k=3$ we get $$\frac{1}{2\cdot 5!}\left(4\binom{5}{2} - 2^{5}\right)=\frac{8}{240}=\frac{1}{30}.$$ Now we use induction to show that this is positive for all $k\ge 3.$ Assume it is positive for $k-1$. The part outside the parentheses is positive, so it suffices to show that $(k+1)\binom{2k-1}{k-1}\ge 2^{2k-1}$. We have $$\frac{(k+1)\binom{2k-1}{k-1}}{k\binom{2k-3}{k-2}} = \frac{(k+1)(2k-1)(2k-2)}{k^2(k-1)}=\frac{2(k+1)(2k-1)}{k^2} = \frac{2(2k^2+k-1)}{k^2}>4 = \frac{2^{2k-1}}{2^{2k-3}},$$ so the result follows from the inductive hypothesis.
\end{proof}

\subsubsection*{Case $\lambda=(1,1):$}

\begin{lemma}\label{lem:a11_roots}
The $k-2$ roots of $a_{\id_k}^{(1,1)}(n)$ are a root between $i$ and $i+1$ for each $i=0,1,\dots,k-3.$
\end{lemma}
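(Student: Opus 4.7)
I would parallel Lemma \ref{lem:a2_roots}, locating $k-2$ real roots of $a_{\id_k}^{(1,1)}(n)$ by exhibiting sign changes at consecutive integers; since the polynomial has degree $k-2$, this will account for all of its roots. For each integer $n \in \{2,3,\ldots,k-2\}$, all three binomial terms in the formula of Theorem \ref{thm:sig=id} vanish, so
\[
a_{\id_k}^{(1,1)}(n) = -\frac{2^{k-1}(2kn-(2k-1))(2n-3)(2n-5)\cdots(2n-2k+3)}{n(2k-1)!}.
\]
Since $2kn-(2k-1)>0$ and $n>0$ for $n\geq 1$, the sign at $n=j$ is $(-1)^{k-j}$, coming from the $k-2$ alternating-sign factors in the product. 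These signs alternate as $j$ increases through $2,\ldots,k-2$, giving a root in each interval $(j,j+1)$ for $j=2,3,\ldots,k-3$, totalling $k-4$ roots.

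Next I would evaluate at $n=1$: here $\binom{0}{k-2}=0$, and the first two binomial terms cancel because $\binom{-1}{k-2}+\binom{-1}{k-1}=(-1)^{k-2}+(-1)^{k-1}=0$. Using $(2k-1)!=(2k-1)!!\cdot 2^{k-1}(k-1)!$ together with $(2k-5)!!=(2k-1)!!/[(2k-3)(2k-1)]$, the remaining product term simplifies to
\[
a_{\id_k}^{(1,1)}(1)=\frac{(-1)^{k-1}}{(k-1)!(2k-3)(2k-1)},
\]
whose sign $(-1)^{k-1}$ is opposite to the sign $(-1)^{k-2}$ at $n=2$, producing the root in $(1,2)$.

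The main obstacle is producing the root in $(0,1)$, which requires extracting $a_{\id_k}^{(1,1)}(0)$ from a formula containing $1/n$ factors. Writing $a_{\id_k}^{(1,1)}(n)=A(n)+B(n)/n$ with $A(n)=\frac{1}{k!}\binom{n-2}{k-2}+\frac{1}{2(k-1)!}\binom{n-1}{k-2}$, verifying $B(0)=0$, and applying L'H\^{o}pital yields $a_{\id_k}^{(1,1)}(0)=A(0)+B'(0)$. A direct logarithmic-derivative computation of $B'(0)$ then gives
\[
a_{\id_k}^{(1,1)}(0)=\frac{(-1)^k}{(k-1)!}\,Q_k,\qquad Q_k:=\frac{k-2}{2k}+H_k-\frac{2k}{2k-1}-2\sum_{j=1}^{k-2}\frac{1}{2j+1},
\]
where $H_k=1+\tfrac12+\cdots+\tfrac1k$, reducing the problem to showing $Q_k>0$ for all $k\geq 3$. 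My plan here is to verify $Q_3=2/15$ by hand, establish the telescoping identity $Q_{k+1}-Q_k=-\frac{1}{k(2k-1)(2k+1)}$ by elementary algebra (so $(Q_k)$ is strictly decreasing), apply the partial-fraction identity $\frac{1}{j(2j-1)(2j+1)}=\frac{1}{(2j-1)(2j)}-\frac{1}{(2j)(2j+1)}$ together with the classical sums $\sum_{j\geq 1}\frac{1}{(2j-1)(2j)}=\ln 2$ and $\sum_{j\geq 1}\frac{1}{(2j)(2j+1)}=1-\ln 2$ to compute $\lim_{k\to\infty}Q_k=\frac{3}{2}-2\ln 2$, and finally invoke $\ln 2<3/4$ (immediate from $\frac{1}{1+x}<1-\frac{x}{2}$ on the interior of $[0,1]$) to conclude $Q_k>\frac{3}{2}-2\ln 2>0$. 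This forces $a_{\id_k}^{(1,1)}(0)$ to have sign $(-1)^k$, opposite to the sign at $n=1$, producing the root in $(0,1)$; combined with the previous $k-3$ roots, this yields $k-2$ real roots, matching the degree and accounting for all roots.
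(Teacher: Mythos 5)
Your proposal is correct, and for most of the argument it follows the same route as the paper: for $n=2,\dots,k-2$ the binomial terms vanish and the alternating signs of the remaining product give roots in $(i,i+1)$ for $i=2,\dots,k-3$; the cancellation of the first two binomial terms at $n=1$ (via $\binom{-1}{k-2}+\binom{-1}{k-1}=0$) gives the root in $(1,2)$; and the value at $n=0$ is extracted by isolating the $1/n$ part and taking the linear term of its numerator (your L'H\^opital step is exactly the paper's ``negative of the constant term times the sum of reciprocals of the roots''). Your quantity $Q_k$ agrees with the bracketed expression the paper must show is positive. The one place you genuinely diverge is in proving $Q_k>0$. The paper pairs terms and uses convexity of $1/x$, i.e.\ $\tfrac12\bigl(\tfrac1j+\tfrac1{j+1}\bigr)>\tfrac{1}{j+1/2}$, to beat each term $\tfrac{1}{j+1/2}$ individually, plus a small leftover comparison $1+\tfrac{1}{2k-2}>\tfrac{2k}{2k-1}$; this is short and entirely elementary. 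You instead establish the telescoping identity $Q_{k+1}-Q_k=-\tfrac{1}{k(2k-1)(2k+1)}$ (which I verified), conclude $Q_k$ decreases to the limit $\tfrac32-2\ln 2>0$, and bound $\ln 2<3/4$ by integrating $\tfrac{1}{1+x}<1-\tfrac{x}{2}$. Your route costs more computation (a partial-fraction decomposition and two classical alternating series) but buys strictly more information: it identifies the exact infimum $\tfrac32-2\ln 2\approx 0.114$ of the $Q_k$, whereas the paper's convexity argument only certifies positivity. Both are complete proofs; the paper's is the more economical for the purpose at hand.
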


\begin{proof}
If we plug in $n=2,3,\dots,k-2$ to the formula from Theorem \ref{thm:sig=id}, the first three terms are 0, so
\begin{equation}\label{eqn:a_11}
a_{\id_k}^{(1,1)}(n) = \frac{-2^{k-1}(2kn-(2k-1))\cdot(2n-3)\dots(2n-2k+3)}{n\cdot(2k-1)!}.
\end{equation}
Since this polynomial has roots at $\frac{2k-1}{2k},\frac{3}{2},\dots,k-\frac{3}{2},$ $a_{\id_k}^{(1,1)}$ flips signs between $i$ and $i+1$ for each $i=2,\dots,k-3,$ so it has a root in the interval $(i,i+1)$. Now for $i=1$, the first three terms of $a_{\id_k}^{(1,1)}(1)$ sum to
$$\frac{1}{k!}\cdot\frac{(-1)(-2)\dots(-k+2)}{(k-2)!} + \frac{1}{k!}\cdot\frac{(-1)(-2)\dots(-k+1)}{(k+1)!} + 0 = \frac{(-1)^{k-2}}{k!}+\frac{(-1)^{k-1}}{k!}=0,$$ so actually, $a_{\id_k}^{(1,1)}$ also matches (\ref{eqn:a_11}) when $n=1$, meaning the root of (\ref{eqn:a_11}) at $\frac{3}{2}$ gives a sign flip between $a_{\id_k}^{(1,1)}(1)$ and $a_{\id_k}^{(1,1)}(2)$, so $a_{\id_k}^{(1,1)}$ must have a root in the interval $(1,2)$.

It remains to consider $i=0.$ Since the numerator has the root $\frac{2k-1}{2k}$ between 0 and 1, it suffices to show that at $n=0$, the sign of $a_{\id_k}^{(1,1)}$ matches the sign of the numerator of (\ref{eqn:a_11}), since then we will know that $a_{\id_k}^{(1,1)}(0)$ and $a_{\id_k}^{(1,1)}(1)$ have opposite signs and thus there is a root in the interval $(0,1).$ The sign of the numerator of (\ref{eqn:a_11}) at $n=0$ is $-(-1)^{k-1}=(-1)^k.$

Now to compute the sign of $a_{\id_k}^{(1,1)}(0)$, when we plug in $n=0$ to the formula from Theorem \ref{thm:sig=id}, for the two terms with an $n$ in the denominator we have to take the linear term in the numerator, which is the negative of the constant term times the sum of the reciprocals of the roots. We get
\begin{align*}
    a_{\id_k}^{(1,1)}(0) &= \frac{(-1)^k}{(k-1)!}\cdot\frac{k-1}{k}+\frac{(-1)^k}{(k-1)!}\left(\frac{1}{2}+\frac{1}{3}+\dots+\frac{1}{k}\right)+\frac{(-1)^k}{2(k-1)!} \\
    &-\frac{(-1)^k}{(k-1)!}\left(\frac{2k}{2k-1}+\frac{1}{1\frac{1}{2}}+\frac{1}{2\frac{1}{2}}+\dots+\frac{1}{k-1\frac{1}{2}}\right).
\end{align*}
Thus, factoring out $\frac{(-1)^k}{(k-1)!},$ to show that this has sign $(-1)^k$ it suffices to show that $$\frac{k-1}{k}+\left(\frac{1}{2}+\frac{1}{3}+\dots+\frac{1}{k}\right)+\frac{1}{2}>\frac{2k}{2k-1}+\frac{1}{1\frac{1}{2}}+\frac{1}{2\frac{1}{2}}+\dots+\frac{1}{k-1\frac{1}{2}}.$$ Since $\frac{1}{x}$ is a convex function, $$\frac{1}{2}\left(1+\frac{1}{2}\right)+\frac{1}{2}\left(\frac{1}{2}+\frac{1}{3}\right)+\dots+\frac{1}{2}\left(\frac{1}{k-2}+\frac{1}{k-1}\right) > \frac{1}{1\frac{1}{2}}+\frac{1}{2\frac{1}{2}}+\dots+\frac{1}{k-1\frac{1}{2}},$$ which simplifies to $$\frac{1}{2}+\left(\frac{1}{2}+\frac{1}{3}+\dots+\frac{1}{k-2}\right)+\frac{1}{2(k-1)} > \frac{1}{1\frac{1}{2}}+\frac{1}{2\frac{1}{2}}+\dots+\frac{1}{k-1\frac{1}{2}}.$$ For the remaining terms, we get $$\frac{k-1}{k}+\frac{1}{2(k-1)}+\frac{1}{k}= 1+\frac{1}{2k-2} > 1+\frac{1}{2k-1}=\frac{2k}{2k-1}.$$ It follows that $a_{\id_k}^{(1,1)}(0)$ has sign $(-1)^k$, which is the opposite sign from $a_{\id_k}^{(1,1)}(1)$, implying that there is a root in the interval $(0,1)$. Thus, there is a root in the interval $(i,i+1)$ for each $i=0,1,\dots,k-3$, completing the proof.
\end{proof}

As in the case $\lambda=(2)$, we already know that $a_{\id_k}^{(1,1)}(k)\ge 0$. We also know that the coefficient equals the polynomial $a_{\id_k}^{(1,1)}(n)$ for $n\ge k+1$ and that this polynomial never switches sign in this range since all the roots are smaller, so again, to complete the proof it suffices to check that the leading coefficient is nonnegative.

\begin{lemma}\label{lem:a11_leadcoeff}
The polynomial $a_{\id_k}^{(1,1)}(n)$ has nonnegative leading coefficient for all $k\ge 2$.
\end{lemma}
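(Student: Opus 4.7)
The plan is to compute the leading coefficient of $a_{\id_k}^{(1,1)}(n)$ term-by-term from the closed form in Theorem~\ref{thm:sig=id} and then reduce the claim to the very same inequality that was verified by induction inside the proof of Lemma~\ref{lem:a2_leadcoeff}. Since $a_{\id_k}^{(1,1)}(n)$ has degree $k-2$, I only need to track the coefficient of $n^{k-2}$ in each of the four summands.

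First I would read off the leading contributions. The term $\frac{1}{k!}\binom{n-2}{k-2}$ contributes $\frac{1}{k!(k-2)!}$; the term $\frac{1}{2(k-1)!}\binom{n-1}{k-2}$ contributes $\frac{1}{2(k-1)!(k-2)!}$; and the term $\frac{1}{n\cdot k!}\binom{n-2}{k-1}$ contributes $\frac{1}{k!(k-1)!}$, since $\binom{n-2}{k-1}$ has leading term $\frac{1}{(k-1)!}n^{k-1}$ and division by $n$ preserves the top coefficient. For the fourth term, the product $(2n-3)(2n-5)\cdots(2n-2k+3)$ has $k-2$ factors with leading term $2^{k-2}n^{k-2}$, and $(2kn-(2k-1))$ contributes its leading factor $2kn$, so after division by $n$ and multiplication by $-\frac{2^{k-1}}{(2k-1)!}$ the contribution is $-\frac{k\cdot 2^{2k-2}}{(2k-1)!}$.

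Next I would clear denominators by multiplying the sum of leading coefficients by $(2k-1)!$ and rewriting each binomial term in terms of $\binom{2k-1}{k-1}=\frac{(2k-1)!}{(k-1)!k!}$. Using the identities $\frac{(2k-1)!}{k!(k-2)!}=(k-1)\binom{2k-1}{k-1}$ and $\frac{(2k-1)!}{2(k-1)!(k-2)!}=\frac{k(k-1)}{2}\binom{2k-1}{k-1}$, the three positive contributions collapse to $\binom{2k-1}{k-1}\bigl[(k-1)+1+\tfrac{k(k-1)}{2}\bigr]=\frac{k(k+1)}{2}\binom{2k-1}{k-1}$. Combining with the negative $-k\cdot 2^{2k-2}$ contribution, the leading coefficient of $a_{\id_k}^{(1,1)}(n)$ works out to
$$\frac{k}{2(2k-1)!}\left((k+1)\binom{2k-1}{k-1}-2^{2k-1}\right).$$

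At this point the problem reduces exactly to verifying $(k+1)\binom{2k-1}{k-1}\ge 2^{2k-1}$. This inequality was already established by induction in the proof of Lemma~\ref{lem:a2_leadcoeff} for $k\ge 3$, and the base case $k=2$ is a direct check: $3\cdot 3=9>8=2^{3}$. So nonnegativity of the leading coefficient is immediate once the simplification is carried out. The main obstacle is really just the bookkeeping; in particular one must be careful that the fourth term's numerator, which a priori has degree $k$, loses a degree to division by $n$ and correctly yields the coefficient $k\cdot 2^{2k-2}$ that pairs with $\frac{k(k+1)}{2}\binom{2k-1}{k-1}$ in the desired way. Conceptually, no new ingredient beyond Lemma~\ref{lem:a2_leadcoeff} is needed.
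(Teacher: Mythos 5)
Your proposal is correct and follows essentially the same route as the paper: extract the four leading coefficients from Theorem~\ref{thm:sig=id}, combine them into $\frac{k}{2(2k-1)!}\left((k+1)\binom{2k-1}{k-1}-2^{2k-1}\right)$, and invoke the inequality $(k+1)\binom{2k-1}{k-1}\ge 2^{2k-1}$ already established in Lemma~\ref{lem:a2_leadcoeff} (handling $k=2$ by a direct check). The only cosmetic difference is that the paper treats $k=2$ by computing the whole coefficient ($\frac{1}{6}$) rather than checking the inner inequality, which is immaterial.
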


\begin{proof}
All four terms in our formula from Theorem \ref{thm:sig=id} have degree $n-2$, so the leading coefficient is the sum of their leading coefficients, which is $$\frac{1}{k!(k-2)!} + \frac{1}{k!(k-1)!} +\frac{1}{2(k-1)!(k-2)!} - \frac{2^{2k-2}k}{(2k-1)!}.$$ Factoring out $\frac{1}{(2k-1)!}$, we get $$\frac{1}{(2k-1)!}\left((k-1)\binom{2k-1}{k-1} + \binom{2k-1}{k-1}+\frac{k(k-1)}{2}\binom{2k-1}{k-1} - 2^{2k-2}k\right).$$ Combining the first three terms and factoring our $\frac{k}{2}$, this becomes $$\frac{k}{2(2k-1)!}\left((k+1)\binom{2k-1}{k-1}-2^{2k-1}\right).$$ This is exactly $\frac{k}{k-2}$ times the leading coefficient of $a_{\id_k}^{(2)}(n)$ which we computed in Lemma \ref{lem:a2_leadcoeff}, so it must be positive for $k>2.$ Also, for $k=2,$ the leading coefficient is $$\frac{2}{2\cdot 3!}\left(3\binom{3}{1}-2^{3}\right)=\frac{1}{6},$$ which is also positive.
\end{proof}
\end{proof}

\section{The case $a_\sigma^{(1)}(n)$}\label{sec:lam=(1)}

In this section, we generalize to the case $\sigma\ne \id_k$ for $\lambda=(1)$, $d=1$, and write down formulas for the associated polynomials $a_\sigma^{(1)}(n)$. Since we are interested in positivity, we also compute the leading coefficients. In this case, the polynomials are not always real-rooted and also do not always have roots less than $k$, so the sign is not necessarily the same for all $n\ge k$, but the leading coefficient will determine the sign for sufficiently large $n$.

\begin{proposition}\label{prop:lam=(1)}
The polynomials $a_\sigma^{(1)}(n)$ can be written as $$a_\sigma^{(1)}(n) = \frac{1}{P(n,k)}\sum_{i=1}^n\sum_{j=1}^k \binom{i-1}{j-1}\binom{i-1}{\sigma(j)-1}\binom{n-i}{k-j}\binom{n-i}{k-\sigma(j)} - \frac{1}{k!}\binom{n-1}{k-1}.$$
\end{proposition}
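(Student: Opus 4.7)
The plan is to expand $a_\sigma^{(1)}(n) = \langle \chi^{(n-1,1)}, M_{\sigma,n}\rangle$ using $\chi^{(n-1,1)} = m_1 - 1$ from (\ref{eqn:chi(1)}), giving
\[
a_\sigma^{(1)}(n) = \langle m_1, M_{\sigma,n}\rangle - \langle 1, M_{\sigma,n}\rangle.
\]
The second inner product is simply $\E_{\pi \in S_n}[N_\sigma(\pi)]$; since each of the $\binom{n}{k}$ position $k$-subsets of $[n]$ is equally likely to realize any of the $k!$ relative orders, this equals $\frac{1}{k!}\binom{n}{k}$.

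For the first inner product, the opening manipulation in the proof of Lemma \ref{lem:m1Cr} (which nowhere uses that $\sigma = \id_k$) rewrites $\langle m_1, M_{\sigma,n}\rangle$ as $\frac{1}{n!}\sum_\pi m_1(\pi) N_\sigma(\pi)$, i.e.\ the expected number of pairs $(i, T)$ with $i$ a fixed point of $\pi$ and $T$ an occurrence of $\sigma$. I would split by whether $i \in T$. For $i \notin T$, the remove-and-relabel bijection of Example \ref{ex:remove_fixed_pts} applies verbatim with $r=1, j=0$, yielding a contribution of $\binom{n}{1}\tfrac{(n-1)!}{n!} \cdot \E_{\pi' \in S_{n-1}}[N_\sigma(\pi')] = \frac{1}{k!}\binom{n-1}{k}$. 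Combining with $-\langle 1, M_{\sigma,n}\rangle$ and applying Pascal's identity $\binom{n}{k} - \binom{n-1}{k} = \binom{n-1}{k-1}$ produces the $-\frac{1}{k!}\binom{n-1}{k-1}$ term of the claimed formula.

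The substantive step is the $i \in T$ contribution, which by linearity of expectation equals $\frac{1}{n!}$ times the number of triples $(\pi, i, T)$ with $i$ a fixed point and $T \ni i$ an occurrence of $\sigma$. I parametrize such triples by $i \in [n]$, the position $j \in [k]$ of $i$ within $T$, the remaining positions of $T$, and the values $\pi$ takes on $T$. The positions $t_1 < \dots < t_{j-1} < i < t_{j+1} < \dots < t_k$ contribute a factor of $\binom{i-1}{j-1}\binom{n-i}{k-j}$. Since $T$ has pattern $\sigma$ and $\pi(t_j) = i$, the value $i$ must be the $\sigma(j)$-th smallest among $\pi(t_1), \dots, \pi(t_k)$, so the other $k-1$ values split as $\sigma(j) - 1$ values drawn from $\{1, \dots, i-1\}$ and $k - \sigma(j)$ from $\{i+1, \dots, n\}$, for another factor of $\binom{i-1}{\sigma(j)-1}\binom{n-i}{k-\sigma(j)}$; the pattern $\sigma$ then uniquely dictates which value goes in which position. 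The remaining $n - k$ positions admit any bijection to the unused values, contributing $(n-k)!$. Dividing by $n!$ yields $\frac{1}{P(n,k)}$, and summing over $i$ and $j$ matches the first term of the formula.

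The only real subtlety is bookkeeping: the positional split around $i$ into $(j-1, k-j)$ and the value-based split around $i$ into $(\sigma(j)-1, k-\sigma(j))$ are independent, producing four separate binomial factors rather than two; keeping positions and values cleanly distinct throughout is the main thing one has to be careful about.
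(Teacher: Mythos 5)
Your proposal is correct and follows essentially the same route as the paper: expand $\chi^{(n-1,1)}=m_1-1$, split the fixed-point/occurrence pairs by whether the fixed point lies in the occurrence, handle the disjoint case by remove-and-relabel plus Pascal's identity, and count the overlapping case via the same four binomial factors (positional split $\binom{i-1}{j-1}\binom{n-i}{k-j}$ and value split $\binom{i-1}{\sigma(j)-1}\binom{n-i}{k-\sigma(j)}$). The only cosmetic difference is that you count triples $(\pi,i,T)$ directly and divide by $n!$, where the paper phrases the same computation as a conditional probability given that $i$ is fixed.
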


\begin{proof}
We can follow essentially the same logic as in the case $\sigma=\id_k$. We have
\begin{align*}
    a_\sigma^{(1)}(n) &= \E_{\pi\in S_n}(m_1(\pi)N_\sigma(\pi))-\E_{\pi\in S_n}(N_\sigma(\pi)) \\
    &= \frac{1}{n}\sum_{i=1}^n\E_{\pi\in S_n}(\#\tn{ occurrence of }\sigma\tn{ in }\pi\tn{ containing }i\mid i\tn{ fixed}) \\
    &+\frac{1}{n}\sum_{i=1}^n \E_{\pi\in S_n}(\#\tn{ occurrences of }\sigma\tn{ in }\pi\tn{ not containing }i\mid i\tn{ fixed}) \\
    &-\E_{\pi\in S_n}(N_\sigma(\pi)).
\end{align*}
The third term is $\frac{1}{k!}\binom{n}{k}$, because there are $\binom{n}{k}$ ways to choose a set of $k$ points and a $\frac{1}{k!}$ chance that each such set forms an increasing subsequence. The second term is $-\frac{1}{k!}\binom{n-1}{k},$ since it represents the expected number of length $k$ increasing subsequences in a permutation on $n-1$ values, as we can imagine removing the fixed point $i$ to get a new permutation $\pi'\in S_{n-1}$ with a new increasing subsequence of length $k$ in the same manner as before. Thus by Pascals' identity, the sum of these two terms is $-\frac{1}{k!}\binom{n-1}{k-1}.$

For the first term, we compute the probability that a sequence $i_1<i_2<\dots <i_{j-1} < i < i_{j+1}<\dots <i_k$ containing $i$ as its $j$th term is an occurrence of $\sigma$. The number of ways to choose such a sequence with $j-1$ terms before $i$ and $k-j$ terms after $i$ is $\binom{i-1}{j-1}\binom{n-i}{k-j}.$ For the sequence $\pi(i_1),\dots,\pi(i_k)$ to be ordered according to $\sigma$, we must have $\pi(i_\ell) < \pi(i) = i$ whenever $\sigma(\ell) < \sigma(j)$ (which happens for $\sigma(j)-1$ values of $\ell$), so there are $\binom{i-1}{\sigma(j)-1}$ ways to choose where these values map under $\pi$. Similarly, the $k-\sigma(j)$ values of $\ell$ for which $\sigma(\ell) > \sigma(j)$ must satisfy $\pi(i_\ell) > i$, so there are $\binom{n-i}{k-\sigma(j)}$ ways to choose where these values map. The total number of places the subsequence $i_1,\dots,i_k$ (not counting the fixed point $i$) could map to under $\pi$ is $P(n-1,k-1)=\frac{1}{n}P(n,k)$, so the probability the sequence actually represents an occurrence of $\sigma$ is $$\frac{\binom{i-1}{\sigma(j)-1}\binom{n-i}{k-\sigma(k)}}{\frac{1}{n}P(n,k)}.$$ Summing over all $j$ and plugging this back in gives the claimed formula for $a_\sigma^{(1)}(n).$
\end{proof}

\begin{proposition}
The leading coefficient of $a_\sigma^{(1)}(n)$ is $$\frac{1}{(2k-1)!}\sum_{j=1}^k \binom{j+\sigma(j)-2}{j-1}\binom{2k-j-\sigma(j)}{k-j} - \frac{1}{k!(k-1)!}.$$
\end{proposition}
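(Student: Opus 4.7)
The plan is to read off the coefficient of $n^{k-1}$ directly from the formula in Proposition~\ref{prop:lam=(1)}, using the fact (from Theorem~\ref{thm:gaetz-ryba-coefficients}) that $a_\sigma^{(1)}(n)$ is a polynomial of degree at most $k-1$, so its leading coefficient equals $\lim_{n\to\infty} a_\sigma^{(1)}(n)/n^{k-1}$. The subtracted term $\tfrac{1}{k!}\binom{n-1}{k-1}$ is itself a polynomial of degree $k-1$ with leading coefficient $\tfrac{1}{k!(k-1)!}$, so it contributes $-\tfrac{1}{k!(k-1)!}$ to the final answer, and it remains to compute the asymptotic contribution of $\tfrac{1}{P(n,k)}$ times the double sum.

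For each fixed $j$, the factor $\binom{i-1}{j-1}\binom{i-1}{\sigma(j)-1}$ is a polynomial in $i$ of degree $a:=j+\sigma(j)-2$ with leading coefficient $\tfrac{1}{(j-1)!(\sigma(j)-1)!}$, and the factor $\binom{n-i}{k-j}\binom{n-i}{k-\sigma(j)}$ is a polynomial in $n-i$ of degree $b:=2k-j-\sigma(j)$ with leading coefficient $\tfrac{1}{(k-j)!(k-\sigma(j))!}$. Replacing either factor by a lower-order monomial produces, after the $i$-sum, a polynomial in $n$ of strictly lower degree, so the leading-in-$n$ behavior of the inner sum comes only from the top monomials and reduces to the standard identity
\[
\sum_{i=1}^n i^{a}(n-i)^{b} \;=\; \frac{a!\,b!}{(a+b+1)!}\,n^{a+b+1} + O(n^{a+b}),
\]
which one can prove either via the Riemann-sum interpretation $\tfrac{1}{n}\sum_{i=1}^n (i/n)^a(1-i/n)^b \to B(a+1,b+1)$, or purely algebraically by binomial-expanding $(n-i)^b$ and applying Faulhaber's formula to each $\sum_{i=1}^n i^{a+b-\ell}$.

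Using $\tfrac{a!}{(j-1)!(\sigma(j)-1)!}=\binom{j+\sigma(j)-2}{j-1}$ and $\tfrac{b!}{(k-j)!(k-\sigma(j))!}=\binom{2k-j-\sigma(j)}{k-j}$, the $j$th inner sum therefore equals $\tfrac{1}{(2k-1)!}\binom{j+\sigma(j)-2}{j-1}\binom{2k-j-\sigma(j)}{k-j}\,n^{2k-1}+O(n^{2k-2})$. Since $P(n,k)=n^k+O(n^{k-1})$, dividing by $P(n,k)$ and summing over $j$ contributes $\tfrac{1}{(2k-1)!}\sum_{j=1}^k\binom{j+\sigma(j)-2}{j-1}\binom{2k-j-\sigma(j)}{k-j}$ to the coefficient of $n^{k-1}$, and adding the $-\tfrac{1}{k!(k-1)!}$ from before produces the stated formula. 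The only real ingredient is the leading-coefficient identity for $\sum i^a(n-i)^b$, so the ``hard part'' is just keeping track of which factorials cancel against which; as a sanity check, when $\sigma=\id_k$ the resulting sum collapses via the classical identity $\sum_{m=0}^{k-1}\binom{2m}{m}\binom{2k-2-2m}{k-1-m}=4^{k-1}$, recovering the leading coefficient $\tfrac{2^{2k-2}}{(2k-1)!}-\tfrac{1}{k!(k-1)!}$ of $a_{\id_k}^{(1)}(n)$ from Theorem~\ref{thm:sig=id}.
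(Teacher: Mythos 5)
Your proposal is correct and follows essentially the same route as the paper: both extract the top-degree monomials $i^{j+\sigma(j)-2}(n-i)^{2k-j-\sigma(j)}$ from the binomial products in Proposition~\ref{prop:lam=(1)}, evaluate the leading behavior of $\sum_i i^a(n-i)^b$ via the Beta integral $\int_0^1 x^a(1-x)^b\,dx$, divide by $P(n,k)\sim n^k$, and subtract the leading coefficient $\tfrac{1}{k!(k-1)!}$ of the second term. The $\sigma=\id_k$ sanity check via $\sum_m\binom{2m}{m}\binom{2k-2-2m}{k-1-m}=4^{k-1}$ is a nice addition not present in the paper, but the core argument is identical.
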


\begin{proof}
Taking only the highest degree parts from each term in the sum $$P(n,k)a_{\sigma}^{(1)}(n)=\sum_{i=1}^n\sum_{j=1}^k \binom{i-1}{j-1}\binom{i-1}{\sigma(j)-1}\binom{n-i}{k-j}\binom{n-i}{k-\sigma(j)}$$ gives \begin{equation}
   \sum_{i=1}^n\sum_{j=1}^k\frac{i^{j+\sigma(j)-2}(n-i)^{2k-j-\sigma(j)}}{(j-1)!(\sigma(j)-1)!(k-j)!(k-\sigma(j))!}. \label{eqn:sig_lead_coef}
\end{equation}
If we fix $k$ but sum over $n$, we can see by taking a limit as $n\to \infty$ that the leading coefficient in the numerator matches the value of the integral $$\int_0^1 x^{j+\sigma(j)-2}(1-x)^{2k-j-\sigma(j)}dx.$$ This is an Eulerian integral of the first kind (see \cite{WhitWat}), so the solution is the Beta function $$\frac{(j+\sigma(j)-2)!(2k-j-\sigma(j))!}{(2k-1)!}.$$ Plugging this back into (\ref{eqn:sig_lead_coef}), the leading coefficient of the sum is $$\frac{1}{(2k-1)!}\sum_{j=1}^k \binom{j+\sigma(j)-2}{j-1}\binom{2k-j-\sigma(j)}{k-j}.$$ The degree of this leading term is $2k-1$ (since each term we are adding has degree $2k-2$). In our formula for $a_\sigma^{(1)}(n)$, this sum is divided by $P(n,k)$, which has leading term $n^k$, so $a_\sigma^{(1)}(n)$ has a leading term of degree $k-1$ with the same leading coefficient. The second term $-\frac{1}{k!}\binom{n-1}{k-1}$ also has degree $k-1$, and has leading coefficient $\frac{1}{k!(k-1)!}.$ It follows that the leading coefficient of $a_\sigma^{(1)}(n)$ is as claimed.
\end{proof}

We can now state the following condition for when $a_{\sigma}^{(1)}(n)$ is nonnegative:

\begin{corollary}\label{cor:lam=(1)}
The polynomial $a_\sigma^{(1)}(n)$ is positive for sufficiently large $n$ if and only if $$\sum_{j=1}^k \binom{j+\sigma(j)-2}{j-1}\binom{2k-j-\sigma(j)}{k-j} \ge \binom{2k-1}{k}.$$
\end{corollary}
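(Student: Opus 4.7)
The plan is to read off the corollary directly from the leading coefficient formula in the preceding proposition, using the standard fact that a polynomial with real coefficients is eventually positive precisely when its leading coefficient is positive.

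First I would recall that the previous proposition gives
\[
[n^{k-1}]\,a_\sigma^{(1)}(n) = \frac{1}{(2k-1)!}\sum_{j=1}^k \binom{j+\sigma(j)-2}{j-1}\binom{2k-j-\sigma(j)}{k-j} - \frac{1}{k!(k-1)!}.
\]
The key algebraic observation is that $\binom{2k-1}{k} = \binom{2k-1}{k-1} = \frac{(2k-1)!}{k!(k-1)!}$, so that $\frac{1}{k!(k-1)!} = \frac{1}{(2k-1)!}\binom{2k-1}{k}$. Pulling out the common factor $\frac{1}{(2k-1)!} > 0$, the leading coefficient can be rewritten as
\[
\frac{1}{(2k-1)!}\left(\sum_{j=1}^k \binom{j+\sigma(j)-2}{j-1}\binom{2k-j-\sigma(j)}{k-j} - \binom{2k-1}{k}\right).
\]
Since $a_\sigma^{(1)}(n)$ is a polynomial in $n$, it is positive for sufficiently large $n$ iff its leading coefficient is positive, which is equivalent to the stated inequality (interpreted with strict sign; the weak inequality $\ge$ as stated accommodates the boundary case where the leading coefficient vanishes and lower-order terms take over).

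There is essentially no obstacle here: the work has all been done in the previous proposition, and the corollary is a one-line rearrangement. The only point requiring care is the interpretation of the boundary case of equality in the displayed inequality, where the leading coefficient is exactly zero and eventual positivity of $a_\sigma^{(1)}(n)$ depends on the sign of the next nonzero coefficient; this subtlety is already implicit in the $\ge$ of the statement and does not affect the proof structure.
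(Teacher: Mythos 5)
Your proposal is correct and matches the paper's approach exactly: the corollary is presented there as an immediate consequence of the leading-coefficient proposition, using precisely the identity $\binom{2k-1}{k} = \frac{(2k-1)!}{k!\,(k-1)!}$ to clear the common factor $\frac{1}{(2k-1)!}$. The boundary case you flag (equality, i.e.\ a vanishing leading coefficient, where eventual positivity is not actually decided by this argument) is likewise left unaddressed in the paper, which gives no separate proof of the corollary, so your treatment is no less complete than the original.
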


It would be interesting to try to find a simpler condition on $\sigma$ for when this leading coefficient is positive, although it is not clear from this formula whether one exists.

\section{Future directions}\label{sec:conclusion}

In this section, we discuss a few potential pathways for proving Conjecture \ref{conj:pos} in general, as well as several other conjectures based on our work in this paper and our numerical computations of the polynomials $a_\sigma^\lambda(n)$ for $\sigma \le 4.$

\subsection{Roots of the polynomials}

Note that if we could prove the real-rootedness statement in Conjecture \ref{conj:pos} and also show that $a_{\id_k}^\lambda(n)$ has nonnegative leading coefficient, that would imply the positivity for $n\ge k+|\lambda|$, since we know from Theorem \ref{thm:gaetz-ryba-coefficients} that $\alpha_{\id_k,n}^\lambda=a_{\id_k}^\lambda(n)$ for those values. Then to prove Conjecture \ref{conj:pos}, it would remain to check the positivity for $k+1\le n \le k+|\lambda|-1,$ since as mentioned in the proof of Theorem \ref{thm:pos}, the positivity for $n=k$ follows automatically from $M_{\id_k,k}$ being a scalar multiple of the regular representation.

Lemmas \ref{lem:a1_roots}, \ref{lem:a2_roots}, and \ref{lem:a11_roots} also show that the roots are fairly evenly spaced for $\lambda=(1),(2),$ and $(1,1)$ (one root in each unit interval in a specific range, plus a root at $-1$ when $\lambda=(1),(2)$), so it would be interesting to know whether something similar holds more generally. Our work in Section \ref{sec:sig=id} together with direct computations of the polynomials for $k\le 4$ also suggests the following:
   
\begin{conjecture}\label{conj:n=-1}
When $\lambda$ has a single row, the polynomials $a_\sigma^\lambda(n)$ have $n=-1$ as a root.
\end{conjecture}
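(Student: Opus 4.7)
The plan is to reduce Conjecture~\ref{conj:n=-1} to an explicit polynomial identity and then evaluate at $n=-1$. For single-row $\lambda=(r)$, the character polynomial formula of Theorem~\ref{thm:char_poly} collapses sharply: since $(r)$ has only one row, the only $\mu$ with $|\mu|=|\rho|$ from which $r-|\rho|$ boxes can be added to produce $(r)$ without two boxes in the same row are $\mu=(r)$ and $\mu=(r-1)$, each contributing the trivial character. Hence
\[
\chi^{(n-r,r)}(\pi) \;=\; T_r(\pi) - T_{r-1}(\pi), \qquad
T_s(\pi) := \sum_{|\rho|=s}\prod_i \binom{m_i(\pi)}{r_i},
\]
where $T_s(\pi)$ equals the number of $\pi$-stable subsets of $[n]$ of size $s$ (unions of cycles of $\pi$ totaling $s$). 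Setting $g_s(n) := \E_{\pi\in S_n}[T_s(\pi)\, N_\sigma(\pi)]$, this yields $a_\sigma^{(r)}(n) = g_r(n) - g_{r-1}(n)$, so the conjecture reduces to showing that $g_r(-1)$ does not depend on $r$ for $r \geq 0$.

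Next I would generalize Proposition~\ref{prop:lam=(1)} to obtain an explicit polynomial expression for $g_s(n)$. By summing over pairs $(S,T)$ consisting of a $\pi$-stable subset $S$ of size $s$ and a $\sigma$-occurrence $T$, conditioning on which positions of $T$ lie in $S$, on the cycle decomposition of $\pi|_S$, and on the placements of those cycles among the positions $[n]$, one arrives at a formula of the shape
\[
g_s(n) \;=\; \frac{1}{P(n,k)}\sum (\text{products of binomial coefficients in } n) \;+\; (\text{lower-order corrections}),
\]
directly analogous to the $s=1$ case. Evaluating at $n=-1$ is then mechanical via two identities: first, $\binom{-1-a}{c}=(-1)^c\binom{a+c}{c}$, which converts each binomial factor into a signed ordinary binomial; and second, the polynomial-extension identity $\sum_{i=1}^{-1}P(i) = -P(0)$, which collapses each sum indexed by a position variable in $[n]$ to a single boundary contribution at $i=0$. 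One then verifies $g_r(-1)=g_{r-1}(-1)$ by pairing each configuration in the reduced sum for $g_r(-1)$ with a configuration of $g_{r-1}(-1)$ obtained by deleting one cycle of $S$. The case $r=1$ already fits this template: a short calculation applied to Proposition~\ref{prop:lam=(1)} yields $a_\sigma^{(1)}(-1)=0$, providing a model for the general argument.

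The main obstacle is combinatorial bookkeeping when $r \geq 2$: the stable subset $S$ can contain cycles of varying lengths, and their interleaving with the positions of the pattern occurrence $T$ in $[n]$ produces many subcases which must be tracked in parallel when pairing terms across $g_r$ and $g_{r-1}$. A conceptually cleaner alternative, hinted at by the simple shape of the identity $\chi^{(n-r,r)} = T_r - T_{r-1}$, would be to construct a sign-reversing involution on the $(S,T)$-data at $n=-1$ that witnesses the cancellation $g_r(-1)=g_{r-1}(-1)$ directly, bypassing the explicit polynomial formulas. We have not been able to find such an involution, but believe its existence would give the most satisfying proof.
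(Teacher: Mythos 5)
First, note that the paper does not prove this statement: Conjecture~\ref{conj:n=-1} is left open, with only the cases $\sigma=\id_k$, $\lambda=(1)$ and $\lambda=(2)$ established (Lemmas~\ref{lem:a1_roots} and~\ref{lem:a2_roots}, by substituting $n=-1$ into the closed forms of Theorem~\ref{thm:sig=id}) plus numerical verification for $k\le 4$. So your proposal must be judged as a research plan rather than against an existing argument, and its sound parts are genuinely valuable. The identity $\chi^{(n-r,r)}=T_r-T_{r-1}$, with $T_s(\pi)$ the number of $\pi$-invariant $s$-subsets of $[n]$, is correct (it follows from Theorem~\ref{thm:char_poly} exactly as you argue, since adding two or more boxes to a one-row $\mu$ to reach $(r)$ forces two boxes into the same row; equivalently it is the decomposition of the permutation module on $s$-subsets), and it correctly reduces the conjecture to showing $g_r(-1)=g_{r-1}(-1)$. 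Your $r=1$ claim also checks out and slightly exceeds the paper: writing $Q(n)=\sum_{i=1}^n q(i,n)$ for the double sum in Proposition~\ref{prop:lam=(1)} and extending polynomially in the upper limit and in the parameter separately, one gets $Q(-1)=-q(0,-1)=-\sum_{j}(-1)^{(j-1)+(\sigma(j)-1)+(k-j)+(k-\sigma(j))}=-k$, while $P(-1,k)=(-1)^k k!$ and $-\frac{1}{k!}\binom{-2}{k-1}=\frac{(-1)^k}{(k-1)!}$, so $a_\sigma^{(1)}(-1)=\frac{(-1)^{k+1}}{(k-1)!}+\frac{(-1)^{k}}{(k-1)!}=0$. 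Written out carefully, this proves the conjecture for $\lambda=(1)$ and all $\sigma$, which the paper only remarks ``could potentially be proven'' from Proposition~\ref{prop:lam=(1)}.

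For $r\ge 2$, however, there is a genuine gap and the proposal is not a proof. The formula for $g_s(n)$ is never actually produced --- ``products of binomial coefficients plus lower-order corrections'' is not an expression one can evaluate at $n=-1$ --- and the proposed pairing ``obtained by deleting one cycle of $S$'' is not even well-defined for the purpose at hand: deleting a cycle of length $\ell$ from an invariant set of size $r$ yields an invariant set of size $r-\ell$, not $r-1$, so it cannot match the configurations counted by $g_r$ with those counted by $g_{r-1}$ except in the all-fixed-points stratum. Moreover, after the $n=-1$ substitution the terms acquire signs from $\binom{-1-a}{c}=(-1)^c\binom{a+c}{c}$ that would have to be tracked through this matching, and you explicitly state that you were unable to construct the needed sign-reversing involution. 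The reduction to $T_r-T_{r-1}$ and the complete $\lambda=(1)$ computation are worth extracting as results; the general single-row case remains a conjecture.
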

   
Lemmas \ref{lem:a1_roots} and \ref{lem:a2_roots} prove this for the cases $a_{\id_k}^{(1)}$ and $a_{\id_k}^{(2)}$, and we have verified it numerically for all $\sigma$ with $k\le 4$. This could potentially be proven for the case $a_\sigma^{(1)}$ for arbitrary $\sigma$ using the formula in Proposition \ref{prop:lam=(1)}. It is also true for the case $k=4$ that the polynomials $a_\sigma^{(1,1,1)}$ have $-1$ as a root (although this is not true for $a_\sigma^{(1,1)}$ or $a_\sigma^{(1,1,1,1)}$), so there might be a more general rule for a certain subset of these polynomials where $n=-1$ is a root.

\subsection{Possible approaches for a combinatorial positivity proof}

Another potential approach to proving Conjecture \ref{conj:pos} more generally is to use the interpretation of inner products with character polynomials as representing a linear combination of expected numbers of tuples consisting of an increasing subsequence and $r_i$ $m_i$-cycles for some $i$, as we did throughout this paper, but by constructing an injection rather than explicitly computing the terms. One thing that might provide some guidance for this approach is that the case $\lambda=(k)$ appears to be tight, in that the polynomials seem to be 0 in this case:
    
\begin{conjecture}\label{conj:lam=(k)}
If $\lambda=(k)$, we get $a_{\id_k}^{(k)}(n)=0.$
\end{conjecture}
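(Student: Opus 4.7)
By Theorem~\ref{thm:gaetz-ryba-coefficients} the polynomial $a_{\id_k}^{(k)}(n)$ has degree at most $k-|(k)|=0$, so it is constant in $n$; the plan is to show $\langle \chi^{(n-k,k)}, M_{\id_k,n}\rangle = 0$ for all $n\ge 2k$.  Writing $N_{\id_k}(\pi)=\sum_{T\in\binom{[n]}{k}}\mathbf{1}[\pi|_T\text{ order-preserving}]$ and interchanging sums, the inner product becomes $\frac{1}{n!}\sum_T \sum_{\pi\in A_T} \chi^{(n-k,k)}(\pi)$, where $A_T:=\{\pi\in S_n : \pi|_T \text{ order-preserving}\}$, so it suffices to prove the stronger per-$T$ vanishing $\sum_{\pi\in A_T} \chi^{(n-k,k)}(\pi)=0$ for each $T$.

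For each $T$, the set $A_T$ is a disjoint union of right cosets of the pointwise stabilizer $H_T:=\mathrm{Stab}_{S_n}(T)\cong S_{n-k}$, indexed by the image $J=\pi(T)\in\binom{[n]}{k}$; the standard coset identity gives $\sum_{\pi\in \sigma H_T}\chi^\mu(\pi) = |H_T|\cdot\mathrm{tr}(\rho^\mu(\sigma)P^\mu_{H_T})$, where $P^\mu_{H_T}$ projects onto the $H_T$-invariants of $V^\mu$.  Branching rules show $\dim(V^{(n-k,k)})^{H_T}=1$ for $n\ge 2k$ (there is a unique standard Young tableau of skew shape $(n-k,k)/(n-k)$), so $P^{(n-k,k)}_{H_T}$ has rank one and each coset sum is proportional to $\langle v_T, \rho(\sigma) v_T\rangle$ for any nonzero $H_T$-invariant $v_T \in V^{(n-k,k)}$.

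To pin down $v_T$, I will realize $V^{(n-k,k)} = \ker(\partial)$, where $\partial : \mathbb{C}[\binom{[n]}{k}]\to \mathbb{C}[\binom{[n]}{k-1}]$ sends $[S]\mapsto\sum_{i\in S}[S\setminus\{i\}]$.  The $H_T$-invariants in $\mathbb{C}[\binom{[n]}{k}]$ are spanned by the $2^k$ level-sum vectors $e_W:=\sum_{S:\,S\cap T=W}[S]$ for $W\subseteq T$, and on this subspace $\partial$ has a triangular action; solving the resulting recursion yields the closed form $v_T = \sum_{S}c_{|S\cap T|}[S]$ with $c_w = (-1)^{k-w}(k-w)!(n-2k+w)!/(n-k)!$.

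Finally, for $\sigma_{T,J}$ any permutation extending the order-preserving bijection $T\to J$, the action gives $(\rho(\sigma_{T,J})v_T)_S = c_{|S\cap J|}$, hence $\langle v_T,\rho(\sigma_{T,J})v_T\rangle = \sum_S c_{|S\cap T|}c_{|S\cap J|}$.  Summing over $J$ and interchanging order reduces everything to showing $\sum_J c_{|S\cap J|}=0$ for each fixed $S$; the cancellation $\binom{n-k}{k-w}(k-w)!(n-2k+w)!=(n-k)!$ eliminates the denominator of $c_w$ and collapses the $J$-sum to $\sum_{w=0}^k\binom{k}{w}(-1)^{k-w}=0$ by the binomial theorem.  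The main obstacle is choosing a realization of the $H_T$-invariant of $V^{(n-k,k)}$ in which this binomial collapse is visible; the boundary realization plus the triangular structure of $\partial$ on $H_T$-invariants does exactly this.
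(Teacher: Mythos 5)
Your argument is correct, and it is worth emphasizing that the paper does \emph{not} prove this statement: Conjecture~\ref{conj:lam=(k)} is left open there, with only the observations that it would follow from Conjecture~\ref{conj:n=-1} (since $a_{\id_k}^{(k)}$ is a constant by Theorem~\ref{thm:gaetz-ryba-coefficients}) and that a bijective proof matching $k$-subsets mapped among themselves against $(k-1)$-subsets mapped among themselves would be desirable. Your representation-theoretic route is genuinely different from anything in the paper and, as far as I can check, complete. The reduction to the per-$T$ vanishing $\sum_{\pi \in A_T}\chi^{(n-k,k)}(\pi)=0$ is legitimate (it is a sufficient, stronger statement), the coset decomposition $A_T=\bigsqcup_J \sigma_{T,J}H_T$ and the identity $\sum_{h\in H}\chi^\mu(\sigma h)=|H|\,\mathrm{tr}(\rho^\mu(\sigma)P^\mu_H)$ are standard, and $\dim (V^{(n-k,k)})^{H_T}=1$ for $n\ge 2k$ is the Kostka number $K_{(n-k,k),(n-k,1^k)}=1$. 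In the kernel model the down operator acts on orbit sums by $\partial e_W=\sum_{i\in W}f_{W\setminus i}+(n-2k+1+|W|)f_W$, giving the recursion $(n-2k+1+w)c_w+(k-w)c_{w+1}=0$, which your closed form for $c_w$ satisfies with $c_k=1\ne 0$; and the final collapse works because $c_w\binom{n-k}{k-w}=(-1)^{k-w}$, so $\sum_J c_{|S\cap J|}=\sum_w\binom{k}{w}(-1)^{k-w}=0$. (A cleaner way to see this last step: the vector with $J$-coordinate $c_{|S\cap J|}$ is $\rho(\tau)v_T$ for any $\tau$ with $\tau(T)=S$, hence lies in $\ker\partial$ and is orthogonal to the all-ones vector spanning the trivial component.) What your approach buys is a uniform, computation-free-at-the-end proof valid for all $n\ge 2k$, which pins down the constant; what the paper's suggested bijective approach would buy, if carried out, is a combinatorial cancellation scheme that might generalize to an injection proving Conjecture~\ref{conj:pos} for other $\lambda$, which your argument does not obviously provide. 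You should also note explicitly that Theorem~\ref{thm:gaetz-ryba-coefficients} guarantees $a_{\id_k}^{(k)}(n)=\alpha^{(k)}_{\id_k,n}$ for $n\ge 2k$, so vanishing of the inner product on that range does determine the constant.
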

    
We know that $a_{\id_k}^{(k)}$ is a constant (since its degree is $k-|\lambda|$), so Conjecture \ref{conj:lam=(k)} would follow from Conjecture \ref{conj:n=-1}, since a nonzero constant polynomial cannot have any roots and thus cannot have $n=-1$ as a root. If Conjecture \ref{conj:lam=(k)} holds, then any injective proof for Conjecture \ref{conj:pos} would give a bijection in this case.

From the character polynomial formulas, $a_{\id_k}^{(k)}$ represents the expected number of ordered pairs consisting of an increasing subsequence of length $k$ and a set of $k$ points which are mapped among themselves, minus the expected number of ordered pairs consisting of an increasing subsequence of length $k$ and a set of $k-1$ points which are mapped among themselves. If we could find a bijection between these two sets, this would give a combinatorial proof that $a_{\id_k}^{(k)}=0$, and potentially some clues as to how to more generally find an injective proof that $a_{\id_k}^{\lambda}(n)$ is nonnegative.

\subsection{Positivity for $\sigma \ne \id_k$}

Another interesting question would be whether there is a general rule for when the polynomials $a_\sigma^\lambda(n)$ are positive for other values of $\sigma$:

\begin{question}
Is there a rule for when $a_\sigma^\lambda(n)$ is nonnegative?
\end{question}

From our computations of the polynomials for $k\le 4$, neither the real-rootedness nor the property of all real roots being less than $k$ holds in general for $\sigma\ne \id_k$. Thus, in general the sign of $a_\sigma^\lambda(n)$ depends on $n$ and only stabilizes for sufficiently large $n$ (based on the sign of the leading coefficient). For the case $\lambda=(1)$, Corollary \ref{cor:lam=(1)} could potentially be used to help find a more explicit rule for when the polynomials $a_\sigma^{(1)}(n)$ are positive. One might hope that for sufficiently large $n$, the values of $a_\sigma^{(1)}(n)$ are monotonically decreasing under we weak Bruhat order, but this is not the case for $k=4$, as $\sigma=4321$ does not have minimal leading coefficient. However, the following does hold for $k\le4$, so we might hope that it also holds in general:

\begin{question}
Do the polynomials $a_\sigma^{(1)}(n)$ with nonnegative leading coefficients form an order ideal in the weak (or strong) Bruhat order? If so, is there a simple characterization of the maximal elements?
\end{question}

It could also be interesting to explore general formulas for these polynomials and try to find closed forms in other special cases, potentially by using similar methods to those in Section~\ref{sec:sig=id} and Section~\ref{sec:lam=(1)}.

\section*{Acknowledgements}
We thank the anonymous reviewer for their thorough reading of the original version of this paper and for providing a more concise proof of Lemma~\ref{lem:E(n,k,r)_sum}.

\printbibliography

\appendix

\section{Polynomiality proofs} \label{sec:poly_proofs}

We prove here Theorems~\ref{thm:M_poly} and \ref{thm:gaetz-ryba-coefficients}. This generalizes and gives an alternate proofs of Gaetz and Ryba's results in \cite{GaRy21}, which concerned the case $\sigma_1=\cdots=\sigma_d$.

\subsection{Polynomiality of $M_{\sigma_1,\dots,\sigma_d,n}$}

\begin{proof}[Proof of Theorem~\ref{thm:M_poly}]
We can interpret the value of $M_{\sigma_1,\dots,\sigma_d,n}$ on the conjugacy class $C$ as $$\E(\#\tn{ of ordered $d$-tuples }(T_1,\dots,T_d)\tn{ with $T_i$ an occurrence of }\sigma_i\tn{ in }\pi \mid \pi \in C).$$ To simplify notation, let $$T = T_1 \cup \dots \cup T_d.$$ By linearity of expectations, we can break the desired expected value into cases, where each case specifies how many distinct elements there are in the set $T\cup \pi(T)$, which elements of $\pi(T)$ map back to elements of $T$ (and thus which subsets of $T\cup \pi(T)$ form cycles), and also the relative ordering of the elements of $T\cup \pi(T).$

\begin{example}\label{ex:case}
Let $n = 15,$ let $C$ be the conjugacy class of permutations of cycle type $(5, 3, 3, 2, 2),$ and let $d = 2$ with $\sigma_1 = 43251$ and $\sigma_2 = 21345.$ Throughout our examples, we will use boxes or circles to indicate occurrences of particular permutation patterns, and colors to indicate cycles of particular sizes. In this example, the elements of the occurrence of $T_1$ of $\sigma_1$ will be boxed, the elements of the occurrence of $T_2$ of $\sigma_2$ will be circled, the 2-cycles will be red and orange, and the 3-cycles will be green and blue. With this notation, a particular choice of $\pi \in C$ (written in two-line notation) together with a choice of $T_1$ and $T_2$ could be as follows:
$$\pi = \begin{pmatrix}
\tc{red}{1} & \boxed{\tc{green}{2}} & \tc{red}{3} & \Circled{4} & \boxed{\Circled{\tc{orange}{5}}} & \boxed{\tc{orange}{6}} & \Circled{7} & \boxed{\tc{green}{8}} & \Circled{9} & \boxed{\tc{green}{10}} & \Circled{11} & \tc{cyan}{12} & \tc{cyan}{13} & \tc{cyan}{14} & 15 \\
\tc{red}{3} & \boxed{\tc{green}{8}} & \tc{red}{1} & \Circled{7} & \boxed{\Circled{\tc{orange}{6}}} & \boxed{\tc{orange}{5}} & \Circled{9} & \boxed{\tc{green}{10}} & \Circled{11} & \boxed{\tc{green}{2}} & \Circled{15} & \tc{cyan}{14} & \tc{cyan}{12} & \tc{cyan}{13} & 4
\end{pmatrix}.$$ In cycle notation, we have $$\pi = (4 \ 7 \ 9 \ 11 \ 15)(2 \ 8 \ 10)(12 \ 14 \ 13)(1 \ 3)(5 \ 6),$$ and the permutation pattern occurrences are $T_1 = (2,5,6,8,10)$ and $T_2 = (4,5,7,9,11).$

The case under which this choice of $(\pi, T_1, T_2)$ falls would specify that the third element of $T_1$ must equal the second element of $T_2,$ but that there are no other overlaps between them. It would also specify that $T_1$ is the union of a 3-cycle and a 2-cycle, ordered in the same way as above. Thus, the boxed and circled elements are required to look something like this, where the green elements form a 3-cycle and the orange elements form a 2-cycle: $$\begin{array}{ccccccccc}
\boxed{\tc{green}{*}} & \Circled{*} & \boxed{\Circled{\tc{orange}{*}}} & \boxed{\tc{orange}{*}} & \Circled{*} & \boxed{\tc{green}{*}} & \Circled{*} & \boxed{\tc{green}{*}} & \Circled{*}
\end{array}.$$
The case would also specify that the remaining four elements of $T_2$ (the black circled elements) map to each other under $\pi$ in order, but do not form a complete cycle (for $\pi,$ we have $4\mapsto 7 \mapsto 9 \mapsto 11$). Finally, it would specify that the relative ordering of all boxed and circled elements, including which ones map to each other, must be exactly the same as above. 

Notice that this implies that there is a fixed permutation $\sigma$ such that the list above forms an occurrence in $\pi$ of $\sigma.$ For our example $\pi$ above, the images of the boxed and circled elements are $$\begin{array}{ccccccccc}
\boxed{\tc{green}{8}} & \Circled{7} & \boxed{\Circled{\tc{orange}{6}}} & \boxed{\tc{orange}{5}} & \Circled{9} & \boxed{\tc{green}{10}} & \Circled{11} & \boxed{\tc{green}{2}} & \Circled{15}
\end{array}.$$
Relabeling the smallest of these as 1, the next smallest as 2, and so on, we get $$\begin{array}{ccccccccc}
\boxed{\tc{green}{5}} & \Circled{4} & \boxed{\Circled{\tc{orange}{3}}} & \boxed{\tc{orange}{2}} & \Circled{6} & \boxed{\tc{green}{7}} & \Circled{8} & \boxed{\tc{green}{1}} & \Circled{9}
\end{array}.$$ Thus, in this case, the elements of $T_1 \cup T_2$ form an occurrence in $\pi$ of 
$$\sigma = \begin{pmatrix}
1 & 2 & 3 & 4 & 5 & 6 & 7 & 8 & 9 \\
5 & 4 & 3 & 2 & 6 & 7 & 8 & 1 & 9
\end{pmatrix}.$$

Given this information about our case, the following permutation $\pi'$ from the same conjugacy class $C$ would also fall under the same case, with the occurrences $T_1'$ and $T_2'$ of $\sigma_1$ and $\sigma_2$ again shown boxed and circled, respectively:
$$\pi' = \begin{pmatrix}
\tc{cyan}{1} & \tc{cyan}{2} & \tc{cyan}{3} & \boxed{\tc{green}{4}} & \Circled{5} & \boxed{\Circled{\tc{orange}{6}}} & \boxed{\tc{orange}{7}} & \Circled{8} & \tc{red}{9} & \tc{red}{10} & \boxed{\tc{green}{11}} & \Circled{12} & \boxed{\tc{green}{13}} & \Circled{14} & 15 \\
\tc{cyan}{3} & \tc{cyan}{1} & \tc{cyan}{2} & \boxed{\tc{green}{11}} & \Circled{8} & \boxed{\Circled{\tc{orange}{7}}} & \boxed{\tc{orange}{6}} & \Circled{12} & \tc{red}{10} & \tc{red}{9} & \boxed{\tc{green}{13}} & \Circled{14} & \boxed{\tc{green}{4}} & \Circled{15} & 5
\end{pmatrix}.$$ 
Note that the order of the elements outside $T \cup \pi(T)$ is not specified by the case and may change.
\end{example}

For each case, we can define a corresponding ``supercase," which includes all possible choices of $T_1$ and $T_2$ subject to the same restrictions about which elements form cycles and which elements map to each other, but without any restrictions on the ordering, including no requirement that $T_1$ and $T_2$ be occurrences of $\sigma_1$ and $\sigma_2.$ To make counting a bit easier, the supercase will also specify an ordering on the cycles of each size which are contained in $T$, and on the elements of each such cycle.

\begin{example}\label{ex:supercase}
The following permutation $\pi''$, together with the indicated choice of $T_1''$ and $T_2''$, would fall under the same supercase as $\pi$ and $\pi'$ from the previous example, even though $T_1''$ and $T_2''$ are not occurrences of $\sigma_1$ or $\sigma_2$, and the green, orange, boxed, and circled elements are not ordered in the same way as before:
$$\pi'' = \begin{pmatrix}
\boxed{\tc{green}{1}} & \boxed{\tc{green}{2}} & \boxed{\tc{green}{3}} & \Circled{4} & \Circled{5} & \Circled{6} & \Circled{7} & \tc{cyan}{8} & \tc{cyan}{9} & \tc{cyan}{10} & \boxed{\tc{orange}{11}} & \boxed{\Circled{\tc{orange}{12}}} & 13 & \tc{red}{14} & \tc{red}{15} \\
\boxed{\tc{green}{3}} & \boxed{\tc{green}{1}} & \boxed{\tc{green}{2}} & \Circled{7} & \Circled{6} & \Circled{13} & \Circled{5} & \tc{cyan}{9} & \tc{cyan}{10} & \tc{cyan}{8} & \boxed{\tc{orange}{12}} & \boxed{\Circled{\tc{orange}{11}}} & 4 & \tc{red}{15} & \tc{red}{14}
\end{pmatrix}.$$
The relevant information preserved from the original case is that the boxed elements are a union of a 2-cycle and a 3-cycle, one element of that 2-cycle is also circled, and the remaining circled elements map to each other but do not form a complete cycle. We would also choose a ``first" element of the 2-cycle $(11 \ 12),$ which must be 12 since that is on the one which is also circled, and we would arbitrarily choose a ``first" element of the 3-cycle, say the 3.
\end{example}

The first key observation is that as $\pi$ ranges over the conjugacy class $C$ and as $T_1,\dots, T_d$ range over a given \emph{supercase}, the probability that $(\pi,T_1,\dots, T_d)$ actually falls into a particular \emph{case} is simply $1/|T\cup \pi(T)|!,$ because as $\pi$ ranges over $C,$ there is no reason for the elements of $T\cup \pi(T)$ to be in any particular order, and thus all orderings of them are equally likely, while each case specifies one exact ordering. The second is that given any $\pi \in C,$ the total number of choices of $T_1,\dots,T_d$ falling under a given supercase is always the same, since this number depends only on the cycle type of $\pi.$ Thus, our overall probability is $$\sum_{\text{supercases}}(\text{\# of choices of the $T_i$'s under the supercase (given $\pi$)})\cdot\frac{\#\text{ of cases under the supercase}}{|T\cup \pi(T)|!}.$$

\begin{example}
For the permutation $\pi$ from Example \ref{ex:case}, we have $$T\cup \pi(T) = \{2,4,5,6,7,8,9,10,11,15\}.$$ This is a set of size 10, so it takes on $10!$ equally likely orderings as $\pi$ ranges over $C$ and $T_1$ and $T_2$ range over the supercase specified in Example \ref{ex:supercase}. We will treat the ordering occurring in $\pi$ as the identity permutation. For the permutation $\pi''$ in Example \ref{ex:supercase}, if we write elements of $T''\cup \pi''(T)$ in the order corresponding to how they are ordered in $\pi$ (including writing the ``first" element of each cycle first, and the elements of the chain in the order in which they map to each other), we get 
$$\begin{array}{ccccccccccc}
\text{Elements in $T\cup \pi(T)$:} & \boxed{\tc{green}{2}} & \Circled{4} & \boxed{\Circled{\tc{orange}{5}}} & \boxed{\tc{orange}{6}} & \Circled{7} & \boxed{\tc{green}{8}} & \Circled{9} & \boxed{\tc{green}{10}} & \Circled{11} & 15 \\
\text{Corresponding elements in $T''\cup \pi''(T''):$} & \boxed{\tc{green}{3}} & \Circled{4} & \boxed{\Circled{\tc{orange}{12}}} & \boxed{\tc{orange}{11}} & \Circled{7} & \boxed{\tc{green}{2}} & \Circled{5} & \boxed{\tc{green}{1}} & \Circled{6} & 13
\end{array}.$$ (Note that we include the element $\pi''(6) = 13$ corresponding to $\pi(11)=15,$ since it is in $\pi''(T'')$ but not in $T''$.) Relabeling the smallest element of $T''\cup \pi''(T'')$ as 1, the second smallest as 2, and so on, we see that the ordering of $T''\cup \pi''(T'')$ in comparison to the ordering of the corresponding elements in $T\cup \pi(T)$ corresponds to the following permutation in $S_{10}$:  
$$\begin{pmatrix}
1 & 2 & 3 & 4 & 5 & 6 & 7 & 8 & 9 & 10 \\
3 & 4 & 9 & 8 & 7 & 2 & 5 & 1 & 6 & 10
\end{pmatrix}.$$
Note that if we had used $T'\cup \pi'(T')$ in place of $T''\cup \pi''(T'')$, we would have just gotten the identity permutation. Thus, only one of the $10!$ permutations would lead to $(\pi, T_1, T_2)$ falling under the case from Example \ref{ex:case}, so the probability of falling under that case would be $\frac{1}{10!}.$ The total number of valid cases falling under the supercase would be the number of orderings such that $T_1$ actually forms an occurrence of $\sigma_1$ and $T_2$ actually forms an occurrence of $\sigma_2,$ which is some constant less than $10!.$
\end{example}

It remains to show that the number of choices of the $T_i$'s given $\pi$ and a particular supercase is a polynomial in $n.$ Given a supercase, we can write $T$ as a disjoint union of \emph{cycles} $(a_1 \ a_2 \ \dots \ a_i)$, where $a_{j+1} = \pi(a_j)$ for $i=1,2,\dots,j-1$ and $a_1=\pi(a_i)$, and \emph{chains} $(b_1,b_2,\dots, b_i)$, where $b_{j+1}=\pi(b_j)$ for $j=1,2,\dots,i-1$ but $\pi^{-1}(b_1)\not\in T$ and $\pi(b_i)\not\in T$. 

\begin{example}
For $(\pi, T_1,T_2)$ from Example \ref{ex:case}, the relevant cycles would be $(2 \ 8 \ 10)$ and $(5 \ 6),$ and the relevant chain would be $(4,7,9,11).$
\end{example}

Then the number of ways to choose all the $a_i$'s (without regard to their being in the correct order specified by our case) is just $$\prod_{i\ge 1} i^{r_i}P(m_i,r_i),$$ where $r_i$ is the number of $i$-cycles in $T$ and $P(n,k)\coloneqq n(n-1)\dots(n-k+1),$ since there are $P(m_i,r_i)$ ways to choose which $i$-cycles get used, and $i$ ways to rotate each $i$-cycle. This is a polynomial whose degree equals the total number of elements of $T$ contained in the cycles, since $m_i$ has degree $i$ and thus $P(m_i, r_i)$ has degree $ir_i$. Recall that we are considering the chosen cycles to be in a particular order and to each be assigned a ``first" element, since then all $|T\cup \pi(T)|!$ possible orderings discussed above will actually be treated as different.

\begin{example}
For the supercase in Example \ref{ex:supercase}, we need to choose a 2-cycle and a rotation of it, which can be done in $2m_2$ ways, since there are $m_2$ 2-cycles to choose from ($2\cdot 2$ for the conjugacy class $C$ in our example). Then we need to choose a 3-cycle and a rotation of it, which can be done in $3m_3$ ways ($3\cdot 2$ in our example). Our polynomial so far is thus $6m_2m_3,$ which has degree $2+3=5,$ since $m_2$ has degree 2 and $m_3$ has degree 3.
\end{example}

Next, we will choose the chains in $T$ (again without regard to their being in the correct order), which will be entirely determined by choosing the first element $b_1$ of each chain. At each step where we are choosing the first element of a chain, the number of choices is \begin{equation}\label{eqn:chains}
n - (\#\tn{ of values in the chosen cycles}) - (\#\tn{ of remaining values in cycles which are too short}).
\end{equation}
If we are choosing a chain of length $i$, a cycle which is ``too short" means a cycle of length at most $i$. The first term of (\ref{eqn:chains}) is a polynomial of degree 1, the second term is a constant, and the third term is a polynomial of degree at most $i$, since it is a linear combination of $m_1,\dots,m_i$, and some constant term. Thus, at each step we multiply by a polynomial whose degree is the number of elements currently being added to $T$.

\begin{example}
For the supercase in Example \ref{ex:supercase}, to count the number of ways to choose the first element of the chain, we would first subtract the 5 elements already used, and then subtract all the remaining elements in cycles of length at most 4, which in this case is equivalent to just subtracting all elements in cycles of length at most 4. Thus, this term works out to $n - m_1 - 2m_2 - 3m_3 - 4m_4,$ so the full polynomial is $$6m_2m_3(n - m_1 - 2m_2 - 3m_3 - 4m_4).$$ This last factor has degree 4 because of the $m_4$ term (corresponding to adding 4 new elements to $T$), so in total our polynomial has degree $2 + 3 + 4 = 9,$ which is precisely the number of elements of $T$: 2 from the 2-cycle, 3 from the 3-cycle, and 4 from the chain of length 4.
\end{example}

\begin{example}
Suppose we instead wanted to choose a 5-cycle and a chain of length 3. The number of ways to choose the 5-cycle would be $5m_5.$ Then, the number of ways to choose the chain would be $n - 5 - m_1 - 2m_2 - 3m_3,$ since the chain cannot be contained in a cycle of length 1, 2, or 3, and it also cannot overlap with the particular 5-cycle already chosen, even though in general it could be part of some 5-cycle. Thus, our polynomial would be $$5m_5(n - 5 - m_1 - 2m_2 - 3m_3),$$ which has degree $5+3 = 8.$
\end{example}

We may also need to subtract some terms to ensure that there are never multiple chains that overlap with each other, including the last element of one chain mapping to the first element of another chain. However, if the problem is the chains overlapping to form an $i$-cycle for some $i,$ we would need to subtract a term of the form $im_i$ (possibly combined with a constant term to account for not overlapping with elements of chosen cycles), and if the problem is the chains overlapping to form a chain of length $i,$ we subtract a term of the form $n - m_1 - 2m_2 - \dots - im_i$, again possibly with a constant term added. In either case, we are subtracting a term of degree $i,$ and $i$ must be at most the sum of the lengths of the chains, so the degree of the new term never exceeds the sum of the lengths of the chains.

\begin{example}
The issue of overlapping chains does not occur for the supercase from Example \ref{ex:supercase}, since there was only one chain being chosen, but suppose instead that we wanted to choose two chains, $(b_1, b_2)$ of length 2 and $(c_1, c_2, c_3)$ of length 3 (and no cycles). The number of ways to choose the two chains independently would be $$(n - m_1 - 2m_2)(n - m_1 - 2m_2 - 3m_3).$$ We can then list all the ways the chains might overlap, as cases we will need to subtract:
\begin{itemize}
    \item They could both be part of the same chain of length 3, either $(b_1 = c_1, b_2 = c_2, c_3)$ or $(c_1, b_1 = c_2, b_2 = c_3)$, which could happen in $2(n - m_1 - 2m_2 - 3m_3)$ ways total.
    \item They could combine to form a chain of length 4, either $(b_1, b_2=c_1, c_2, c_3)$ or $(c_1, c_2, c_3 = b_1, b_2),$ which could happen in $2(n - m_1 - 2m_2 - 3m_3 - 4m_4)$ ways.
    \item They could form a cycle of length 4, either $(b_1 \ \ b_2 = c_1 \ \ c_2 \ \ c_3)$ or $(c_1 \ \ c_2 \ \ c_3 = b_1 \ \ b_2),$ which could happen in $2\cdot 4m_4$ ways.
    \item They could form a chain of length 5, $(b_1, b_2, c_1, c_2, c_3)$ or $(c_1, c_2, c_3, b_1, b_2)$, which could happen in $2(n - m_1 - 2m_2 - 3m_3 - 4m_4 - 5m_5)$ ways.
    \item They could form a cycle of length 5, $(b_1 \ b_2 \ c_1 \ c_2 \ c_3),$ which could happen in $5m_5$ ways.
\end{itemize}
Putting all this together, our polynomial would be 
\begin{align*}
    (n - m_1 - 2m_2)(n - m_1 - 2m_2 - 3m_3) &- 2(n-m_1-2m_2 - 3m_2) \\
    &-2(n - m_1 - 2m_2 - 3m_2 - 4m_4) \\
    &-2\cdot 4m_4 \\
    &-2(n - m_1 - 2m_2 - 3m_2 - 4m_4 - 5m_5) \\
    &- 5m_5.
\end{align*}
It has degree 5, which is precisely the sum of the sizes of our two chains.
\end{example}

In general, the number of possible ways to choose $T$ in each case is a polynomial of degree at most $|T|\le |T_1|+\dots+|T_d|=k_1+\dots+k_d,$ since very term has at most this degree.

Then to find the expected number of successful choices of $(T_1, \dots, T_d)$ over $\pi \in C$ for each case, we need to multiply the total number of choices for $(T_1, \dots, T_d)$ for the corresponding supercase by the probability over all $\pi \in C$ that the chosen elements are actually ordered in the way we want according to our particular case, which, as noted above, is just $1/|T\cup \pi(T)|!.$ Summing over all cases, our expected value is a linear combination of polynomials of degree at most $k_1+\dots+k_d$, so we get a polynomial of at most the same degree.
\end{proof}

\subsection{Polynomiality of the character coefficients}

It follows from Theorem \ref{thm:M_poly} that for $n \geq 2(k_1+\cdots+k_d)$, the coefficients $$\alpha^\lambda_{\sigma_1,\dots,\sigma_r,n} = \chi_{\lambda[n]}, \langle M_{\sigma_1,\dots,\sigma_r, n}\rangle$$
agree with polynomials $a_{\sigma_1,\dots,\sigma_d}^\lambda(n)$ in $n$ of degree at most $k_1+\dots+k_d-|\lambda|$, by the same logic as in \cite{GaRy21}: the character polynomials $\chi^{\lambda[n]}$ form a basis for the space of polynomials in $m_1,\dots,m_{k_1+\dots+k_d}$ of degree at most $k_1+\dots+k_d$ (where $m_i$ has degree $i$), and so we can expand our polynomial in this basis to get coefficients which are polynomials in $n$ of the claimed degrees. Setting $n\ge 2(k_1+\dots+k_d)$ ensures that this works, since then all the relevant characters agree with the corresponding character polynomials.

However, we can also give a more direct argument for why these coefficients are polynomials in $n$, which will also show that in fact the polynomial $a_{\sigma_1,\dots,\sigma_d}^\lambda(n)$ agrees with the coefficient for $n\ge k_1+\dots+k_d+|\lambda|$, not just for $n\ge 2(k_1+\dots+k_d).$

\begin{proof}[Proof of Theorem~\ref{thm:gaetz-ryba-coefficients}]
We will begin by expanding the inner product in a similar manner to the cases computed in Section \ref{sec:sig=id}, and then we will interpret each term of the resulting sum as an expected value. We know that the character $\chi^{\lambda[n]}$ can be written using the character polynomial formula (Theorem ~\ref{thm:char_poly}, \cite{Macdonald}) as some linear combination of terms of the form $\prod_{i\ge 1} \binom{m_i}{r_i}$. Thus, by linearity of inner products, we can write $\alpha_{\sigma_1,\dots,\sigma_d,n}^\lambda$ as a linear combination of terms of the form $$\left\langle \prod_{i\ge 1}\binom{m_i}{r_i}, M_{\sigma_1,\dots,\sigma_d,n}\right\rangle.$$ By definition, the inner product of two class functions $f$ and $g$ on $S_n$ is $$\langle f, g\rangle = \frac{1}{n!} \sum_{C\text{ a conjugacy class in }S_n}|C| \cdot f(C)\overline{g(C)}.$$ In this case, the inner product can be expanded as $$\frac{1}{n!}\sum_{C\text{ a conjugacy class in }S_n} |C|\cdot \binom{m_1(C)}{r_1}\binom{m_2(C)}{r_2}\ldots \cdot M_{\sigma_1,\dots,\sigma_d,n}(C).$$ Since $M_{\sigma_1,\dots,\sigma_d,n}(C)$ is the average value of $N_{\sigma_1}(\pi)\dots N_{\sigma_d}(\pi)$ over $\pi \in C,$ so we can rewrite this sum as $$\frac{1}{n!}\sum_{C\text{ a conjugacy class in }S_n} \left(|C|\cdot \binom{m_1(C)}{r_1}\binom{m_2(C)}{r_2}\ldots\cdot \frac{1}{|C|}\sum_{\pi\in C}N_{\sigma_1}(\pi)\dots N_{\sigma_d}(\pi)\right),$$ which is equivalent to $$\frac{1}{n!}\sum_{\pi\in S_n}\binom{m_1(\pi)}{r_1}\binom{m_2(\pi)}{r_2}\ldots\cdot N_{\sigma_1}(\pi)\dots N_{\sigma_d}(\pi).$$ This term represents the expected value $$\E_{\pi \in S_n}(\#\tn{ of tuples }(R_1,R_2,\dots,T_1,\dots,T_d) \mid R_i\tn{ is the union of $r_i$ $i$-cycles, $T_i$ is an occurrence of $\sigma_i$}),$$ since $\binom{m_i(\pi)}{r_i}$ represents the number of ways to choose a set $R_i$ of $r_i$ $i$-cycles in $\pi$ and $N_{\sigma_i}(\pi)$ represent the number of ways to choose an occurrence $T_i$ of $\sigma_i$ in $\pi.$ To help simplify notation, let $$R=\bigcup_{i\ge 1}R_i, \ \ \ \ T = T_1\cup \dots \cup T_d, \ \ \ \ r = |R| = \sum_{i\ge 1}ir_i, \ \ \ \ s = |T\bs R|,$$ so $|R\cup T|=r+s.$ 

We will consider cases based on how exactly the elements of $R \cup T$ are ordered and overlap with each other, as well as the precise ordering of their images. The approach to specifying these cases will be explained via the following example.

\begin{example}\label{ex:cases}
Suppose $n=20,$ and that $R$ consists of 2 fixed points, a 2-cycle, and a 3-cycle, so $r_1=2,$ $r_2=1,$ $r_3=1,$ $r_i = 0$ for $i\ge 4,$ and $r = r_1 + 2r_2 + 3r_3 = 7.$ Let $\sigma_1 = 312$ and $\sigma_2 = 31452.$ Then the expected number of tuples $(R_1,R_2,R_3,T_1,T_2)$ over all $\pi \in S_n$ is $$\frac{1}{n!}\sum_{\pi\in S_n}\binom{m_1(\pi)}{2} m_2(\pi) m_3(\pi) \cdot N_{312}(\pi) N_{31452}(\pi).$$ One possible choice of $\pi,R,$ and $T$ would be 
$$\begin{pmatrix}
1 & \tc{red}{2} & \boxed{3} & 4 & \Circled{\tc{green}{5}} & 6 & 7 & \Circled{8} & \tc{cyan}{9} & \boxed{\tc{orange}{10}} & \tc{cyan}{11} & \Circled{\tc{cyan}{12}} & \boxed{13}  & 14 & 15 & \tc{green}{16} & \boxed{17} & 18 & \boxed{19} & 20 
\\
3 & \tc{red}{2} & \boxed{18} & 4 & \Circled{\tc{green}{16}} & 8 & 1 & \Circled{6} & \tc{cyan}{11} & \boxed{\tc{orange}{10}} & \tc{cyan}{12} & \Circled{\tc{cyan}{9}} & \boxed{19}  & 7 & 15 & \tc{green}{5} & \boxed{20} & 13 & \boxed{14} & 17 \\
\end{pmatrix},$$ with the two fixed points 2 and 10 in $R_1$ shown in red and orange, the 2-cycle $R_2 = (5 \ 16)$ shown in green, the 3-cycle $R_3 = (9 \ 11 \ 12)$ shown in blue, the occurrence $T_1=(5,8,12)$ of $\sigma_1 = 312$ shown circled, and the occurrence $T_2=(3,10,13,17,19)$ of $\sigma_2 = 31452$ shown boxed. We get that $R = \{2,5,9,10,11,12,16\}$ is the set of colored elements, $T = \{3,5,8,10,12,13,17,19\}$ is the set of boxed or circled elements, and $s = |T\bs R| = |\{3,8,13,17,19\}| = 5.$

Now to specify what case this choice of $\pi, R,$ and $T$ falls under, we would require that the ordering of the colored, boxed, and circled elements be exactly as shown above. In this case, writing out only those elements in order, we get $$\begin{array}{cccccccccccc}
\tc{red}{2} & \boxed{3} & \Circled{\tc{green}{5}} & \Circled{8} & \tc{cyan}{9} & \boxed{\tc{orange}{10}} & \tc{cyan}{11} & \Circled{\tc{cyan}{12}} & \boxed{13} & \tc{green}{16} & \boxed{17} & \boxed{19}
\end{array}.$$ So, this particular case would specify that when written in increasing order, the colored, boxed, and circled numbers follow the pattern $$\begin{array}{cccccccccccc}
\tc{red}{*} & \boxed{*} & \Circled{\tc{green}{*}} & \Circled{*} & \tc{cyan}{*} & \boxed{\tc{orange}{*}} & \tc{cyan}{*} & \Circled{\tc{cyan}{*}} & \boxed{*} & \tc{green}{*} & \boxed{*} & \boxed{*}
\end{array}.$$ Also, the images of these boxed and circled numbers under $\pi$ are $$\begin{array}{cccccccccccc}
\tc{red}{2} & \boxed{18} & \Circled{\tc{green}{16}} & \Circled{6} & \tc{cyan}{11} & \boxed{\tc{orange}{10}} & \tc{cyan}{12} & \Circled{\tc{cyan}{9}} & \boxed{19} & \tc{green}{5} & \boxed{20} & \boxed{14}
\end{array}.$$ Relabeling the smallest of these numbers as 1, the second smallest as 2, the next smallest as 3, and so on, we get
$$\begin{array}{cccccccccccc}
\tc{red}{1} & \boxed{10} & \Circled{\tc{green}{9}} & \Circled{3} & \tc{cyan}{6} & \boxed{\tc{orange}{5}} & \tc{cyan}{7} & \Circled{\tc{cyan}{4}} & \boxed{11} & \tc{green}{2} & \boxed{12} & \boxed{8}
\end{array}.$$
Thus, our sequence $R\cup T$ of colored, boxed and circled numbers is an occurrence in $\pi$ of the permutation $$\sigma = \begin{pmatrix}
1 & 2 & 3 & 4 & 5 & 6 & 7 & 8 & 9 & 10 & 11 & 12 \\
1 & 10 & 9 & 3 & 6 & 5 & 7 & 4 & 11 & 2 & 12 & 8.
\end{pmatrix}$$ The case we are in will also require that the boxed and circled numbers $R\cup T$ always be an occurrence in $\pi$ of this particular permutation $\sigma.$

For instance, the following permutation would also fall under the same case, since the colored, boxed, and circled numbers are ordered in the same way as above, and they form an occurrence in $\pi$ of the same permutation $\sigma$:
$$\begin{pmatrix}
\tc{red}{1} & \boxed{2} & \Circled{\tc{green}{3}} & \Circled{4} & \tc{cyan}{5} & 6 & 7 & 8 & \boxed{\tc{orange}{9}} & \tc{cyan}{10} & \Circled{\tc{cyan}{11}} & \boxed{12} & \tc{green}{13} & \boxed{14} & \boxed{15} & 16 & 17 & 18 & 19 & 20 
\\
\tc{red}{1} & \boxed{14} & \Circled{\tc{green}{13}} & \Circled{4} & \tc{cyan}{10} & 6 & 7 & 8 & \boxed{\tc{orange}{9}} & \tc{cyan}{11} & \Circled{\tc{cyan}{5}} & \boxed{15} & \tc{green}{3} & \boxed{16} & \boxed{12} & 14 & 17 & 18 & 19 & 20 
\end{pmatrix}.$$

\end{example}

Observe now that given the values of $r_1,r_2,r_3,\dots$ and $\sigma_1,\sigma_2,\sigma_3,\dots,$ there are a fixed number of such cases, no matter what $n$ is, since the number of colored, boxed and circled elements is bounded as $n$ grows, and those are the only elements relevant to specifying what a particular case looks like. The idea now is to find the expected value over $\pi\in S_n$ of the number of pairs $(R,T)$ falling under a particular case, and to then sum over all cases. If we can show that the expected number of such pairs is a polynomial in $n$ for every case, it will follow that the total number of expected pairs over all cases is also a polynomial in $n.$

For the purpose of computing these expected values, we will ignore what $\pi$ does to elements outside $R\cup T,$ since this is not relevant and does not impact the expected value. For each case, we will compute the expected value to be $$\frac{\tn{\# of successful choices for $R$, $T$, and their images under $\pi$}}{\tn{total \# of choices for where $R\cup T$ could map under $\pi$}}.$$ Assuming $|R\cup T| = r + s$ for the particular case we are in, the denominator will be $P(n, r+s),$ since as $\pi$ ranges over $S_n,$ there are $n(n-1)(n-2) = \dots(n-r-s+1) = P(n, r+s)$ equally likely choices for the images of any $r+s$ elements.

% By linearity of expectations, the desired expected value is thus $$\sum_{\tn{cases for overlaps}}\frac{\#\tn{ of successful choices for $R\cup T$ and its image under }\pi}{P(n,r+s)},$$ where ``successful" means that $R_i$ is a union of $r_i$ $i$-cycles and $T_i$ is an occurrence of $\sigma_i$ for every $i$, and also that they obey the ordering specified by the case in question. The denominator represents the total number of different ways $\pi(R\cup T)$ could be chosen if we fix $R\cup T$ and let $\pi$ range over $S_n$, and this sum gives the desired expected value because the images of $r+s$ fixed values in $[n]$ under a random permutation are equally likely to be any ordered tuple of $r+s$ elements of $[n].$

To compute the numerator, we will further subdivide each case into subcases based on first choosing which elements of $[n]$ are in $R.$

\begin{example} \label{ex:subcases}
Using the first permutation from Example \ref{ex:cases}, choosing a subcase would mean choosing the colored elements, so the relevant subcase would be permutations that look like
$$\begin{pmatrix}
1 & \tc{red}{2} & 3 & 4 & \Circled{\tc{green}{5}} & 6 & 7 & 8 & \tc{cyan}{9} & \boxed{\tc{orange}{10}} & \tc{cyan}{11} & \Circled{\tc{cyan}{12}} & 13  & 14 & 15 & \tc{green}{16} & 17 & 18 & 19 & 20 
\\
* & \tc{red}{2} & * & * & \Circled{\tc{green}{16}} & * & * & * & \tc{cyan}{11} & \boxed{\tc{orange}{10}} & \tc{cyan}{12} & \Circled{\tc{cyan}{9}} & *  & * & * & \tc{green}{5} & * & * & * & * \\
\end{pmatrix}.$$ Note that we already know which elements of $R$ are boxed and circled and where all elements of $R$ map under $\pi,$ because this was specified by our case described in Example \ref{ex:cases}. However, do not yet know exactly which other elements will be boxed or circled or what their images will be under $\pi,$ although we do know that the pattern of boxed and circled elements should be $$\begin{array}{cccccccccccc}
\tc{red}{*} & \boxed{*} & \Circled{\tc{green}{*}} & \Circled{*} & \tc{cyan}{*} & \boxed{\tc{orange}{*}} & \tc{cyan}{*} & \Circled{\tc{cyan}{*}} & \boxed{*} & \tc{green}{*} & \boxed{*} & \boxed{*}
\end{array},$$ and that these elements should form an occurrence in $\pi$ of the permutation $\sigma$ from Example \ref{ex:cases}. Note that in counting the number of possibilities, we will ignore where elements which are not boxed, circled, or colored map under $\pi.$ Thus, in the case of our permutation from Example \ref{ex:cases}, the relevant information would be 
$$\begin{pmatrix}
1 & \tc{red}{2} & \boxed{3} & 4 & \Circled{\tc{green}{5}} & 6 & 7 & \Circled{8} & \tc{cyan}{9} & \boxed{\tc{orange}{10}} & \tc{cyan}{11} & \Circled{\tc{cyan}{12}} & \boxed{13}  & 14 & 15 & \tc{green}{16} & \boxed{17} & 18 & \boxed{19} & 20 
\\
* & \tc{red}{2} & \boxed{18} & * & \Circled{\tc{green}{16}} & * & * & \Circled{6} & \tc{cyan}{11} & \boxed{\tc{orange}{10}} & \tc{cyan}{12} & \Circled{\tc{cyan}{9}} & \boxed{19}  & * & * & \tc{green}{5} & \boxed{20} & * & \boxed{14} & * \\
\end{pmatrix},$$

Let us now count the number of choices under this subcase for the remaining boxed and circled elements and their images (the above being one such possibility). First we choose the elements themselves. We need to choose one of 3 and 4 to be boxed, one of 6, 7, and 8 to be circled, one of 13, 14, and 15 to be boxed, and two of 17, 18, 19, and 20 to be boxed. In total, this gives $$\binom{2}{1}\binom{3}{1}\binom{3}{1}\binom{4}{2}$$ choices so far. Next, we need to choose the images of these elements. To match the ordering of the images shown above, the image of the 2nd circled element must be chosen from among 6, 7, and 8, the image of the final boxed element must be chosen from among 13, 14, and 15, and the images of the remaining three boxed elements must be chosen from among 17, 18, 19, and 20. Thus, the total number of ways to choose the images of these elements is $$\binom{3}{1}\binom{3}{1}\binom{4}{3}.$$ Once these choices are made, all the relevant information will be determined, since we do not care where the remaining elements map under $\pi.$
\end{example}

We will now generalize this example. Given a particular case (before specifying a subcase), there are numbers $j_1,j_2,\dots,j_{r+1}$ such that the list of elements of $R\cup T$ in increasing order looks like $$\boxed{j_1\tn{ terms}} \ \ \tn{\tc{red}{element of $R$}} \ \ \boxed{j_2\tn{ terms}} \ \ \tn{\tc{red}{element of $R$}} \ \ \dots \ \ \tn{\tc{red}{element of $R$}} \ \ \boxed{j_{r+1}\tn{ terms}}.$$ 

\begin{example}
Counting the number of boxed or circled elements between consecutive colored elements in Example \ref{ex:cases} (which has $r=7$ for the 7 colored elements), we find $j_1=0$ since there are no boxed or circled elements before the \tc{red}{2}, $j_2=1$ because there is one boxed element between the \tc{red}{2} and the $\Circled{\tc{green}{5}},$ $j_3 = 1$ because there is one boxed element between the $\Circled{\tc{green}{5}}$ and the \tc{cyan}{9}, $j_4=j_5=j_6=0,$ $j_7=1$ since there is one boxed element between the $\Circled{\tc{cyan}{12}}$ and the \tc{green}{16}, and $j_8=2$ since there are 2 boxed elements after the \tc{green}{16}.
\end{example}

Similarly, there are numbers $\ell_1,\ell_2,\dots,\ell_{r+1}$ so that the list of elements of $\pi(R\cup T) = R\cup\pi(T)$ in increasing order looks like $$\boxed{\ell_1\tn{ terms}} \ \ \tn{\tc{red}{element of $R$}} \ \ \boxed{\ell_2\tn{ terms}} \ \ \tn{\tc{red}{element of $R$}} \ \ \dots \ \ \tn{\tc{red}{element of $R$}} \ \ \boxed{\ell_{r+1}\tn{ terms}}.$$ 

\begin{example}
In Example \ref{ex:cases}, we get $\ell_1 = \ell_2=0$ since none of the boxed or circled elements have images less than $\Circled{\tc{green}{5}},$ $\ell_3=1$ since one circled element has image between $\Circled{\tc{green}{5}}$ and \tc{cyan}{9}, $\ell_4=\ell_5=\ell_6=0,$ $\ell_7=1$ since one boxed element has image between $\Circled{\tc{cyan}{12}}$ and \tc{green}{16}, and $\ell_8=3$ since 3 boxed elements have image greater than \tc{green}{16}.
\end{example} 

Note that the $j_i$'s and $\ell_i$'s are determined by the ordering of the elements of $R\cup T$ specified by our particular case (without regard yet to which subcase we are in), and that $$j_1+\dots+j_{r+1}=\ell_1+\dots+\ell_{r+1}=|T\bs R| = s.$$

Next, \emph{once we have specified a subcase}, there are numbers $n_1,\dots,n_{r+1}$ such that the full list $1,\dots, n$ looks like $$n_1\tn{ terms} \ \ \tn{\tc{red}{element of $R$}} \ \ n_2\tn{ terms} \ \ \tn{\tc{red}{element of $R$}} \ \ \dots \ \ \tn{\tc{red}{element of $R$}} \ \ n_{r+1}\tn{ terms},$$ and these numbers satisfy $$n_1 + \dots + n_{r+1} = n - r.$$

\begin{example}
For the subcase in Example \ref{ex:subcases}, we get $n_1=1$ (the 1), $n_2=2$ (the 3 and 4), $n_3=3$ (the 6, 7, and 8), $n_4=n_5=n_6=0,$ $n_7=3$ (the 13, 14, and 15), and $n_8=4$ (the 17, 18, 19, and 20).
\end{example}

Now we can do a calculation like the one in Example \ref{ex:subcases}. The total number of ways we could choose $T\bs R$ (the non-colored boxed and circled elements) and their images given our particular subcase is $$\prod_{i=1}^{r+1} \binom{n_i}{j_i}\cdot \prod_{i=1}^{r+1}\binom{n_i}{\ell_i},$$ since in the $i$th block we have $n_i$ elements to choose from, and we must choose $j_i$ of them to be in $T$ and $\ell_i$ of them to be in $\pi(T)$. (One can check that this matches the computation in Example \ref{ex:subcases} for that particular subcase.) Summing over all choices for the $n_i$'s (subcases) and then dividing by the denominator $P(n,r+s)$ that we determined earlier, the expected value of $\pi \in S_n$ for the number of choices of $R,T,$ and their images that work for a particular case is
\begin{equation} \label{eqn:R_T_choices}
    \frac{1}{n(n-1)\dots(n-r-s+1)}\sum_{n_1+n_2+\dots n_{r+1} = n-r} \ \ \prod_{i=1}^{r+1}\binom{n_i}{j_i}\binom{n_i}{\ell_i}.
\end{equation}
This is a rational function in $n$, and to show that it is a polynomial, it suffices to show that all the roots of the denominator, namely, $n=0,1,\dots,r+s-1$, are also roots of the numerator. This holds because the numerator counts a particular set of ways to choose two sets of $r+s$ elements from among the elements of $[n],$ and if $n<r+s,$ that would mean choosing more elements than we have available, so the numerator must be $0.$ Thus, we get a polynomial in every case, so summing over all cases, we also get a polynomial. Finally, summing over all terms of the form $$\frac{1}{n!}\sum_{\pi\in S_n}\binom{m_1(\pi)}{r_1}\binom{m_2(\pi)}{r_2}\ldots\cdot N_{\sigma_1}(\pi)\dots N_{\sigma_d}(\pi)$$ from our expanded inner product $\langle \chi^{\lambda[n]},M_{\sigma_1,\dots,\sigma_d,n}\rangle, $ we get that $\alpha_{\sigma_1,\dots,\sigma_d,n}^\lambda$ is a polynomial as well. We write $a_{\sigma_1,\ldots,\sigma_d}^{\lambda}(n)$ for this polynomial.

This argument is valid as long as we are not dividing by 0, which is the case as long as $n\ge r+s$. Since $r \le |\lambda|$ for all terms in the character polynomial for $\chi^{\lambda[n]}$ and $s \le k_1+\dots+k_d$ in all cases, it suffices to take $n \ge k_1+\dots+k_d+|\lambda|$ to ensure that $n\ge r+s,$ and therefore that $a_{\sigma_1,\dots,\sigma_d}^\lambda(n)=\alpha_{\sigma_1,\dots,\sigma_d,n}^\lambda.$
\end{proof}

\section{Proof of Lemma ~\ref{lem:E(n,k,r)}}\label{app:E(n,k,r)_proof} \label{sec:technical_lemma}

The goal of this appendix is to prove Lemma~\ref{lem:E(n,k,r)}, which states that for $n\ge k,$ $$E(n,k,r) = \frac{2^{k-r}}{(r-1)!!(2k-r)!!}\binom{n-\frac{r}{2}}{k-r}.$$ Our proof of this formula is structured as follows:
\begin{enumerate}
    \item Use generating functions to prove the statement for $r=1$ (Claim \ref{claim:E(n,k,1)}).
    \item Use induction on $n$ together with Claim \ref{claim:E(n,k,1)} to prove the statement in general (Claim \ref{claim:E_and_E'}).
\end{enumerate}

Our proof will make use of the following two generating functions:

\begin{definition}\label{def:gen_funs}
Let $F(x,y)$ and $G(x,y)$ be the generating functions
\begin{align*}
    F(x,y) &= \sum_{n,k\ge 0} \binom{n}{k}^2x^{2n}y^{2k}, \\
    G(x,y) &= \frac{1}{2}\sum_{n,k\ge 0} \binom{2n}{2k-1} x^{2n}y^{2k}.
\end{align*}
\end{definition}

From Lemma~\ref{lem:E(n,k,r)_sum}, the $E(n,k,r)$'s can be expressed in terms of coefficients of powers of $F(x,y)$ as $$\sum_{n,k\ge 0} P(n,k)E(n,k,r)x^{2n}y^{2k} = (xy)^{2r}F(x,y)^{r+1}.$$

Our second claim will give a closed form for $E(n,k,1)$, or equivalently, a relationship between these two generating functions.

\begin{claim}\label{claim:E(n,k,1)}
The formula holds for $r=1$. Equivalently,
\begin{equation}
    E(n,k,1) = \frac{2^{k-1}}{(2k-1)!!}\binom{n-\frac{1}{2}}{k-1} = \frac{2^{k-1}(2n-1)(2n-3)\dots(2n-2k+3)}{(2k-1)!} = \frac{1}{2P(n,k)}\binom{2n}{2k-1}.
\end{equation}
In terms of generating functions, this can be written as 
\begin{equation}
    x^2y^2 F(x,y)^2 = G(x,y).\label{eqn:gen_funs}
\end{equation}
\end{claim}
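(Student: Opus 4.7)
The plan is to prove the generating function identity $x^2 y^2 F(x,y)^2 = G(x,y)$ and then deduce the numerical formula for $E(n,k,1)$ by comparing coefficients. Applying Lemma~\ref{lem:E(n,k,r)_sum} with $r=1$ gives
$$P(n,k)\,E(n,k,1) \;=\; \sum_{n_1+n_2=n-1}\;\sum_{k_1+k_2=k-1}\binom{n_1}{k_1}^2\binom{n_2}{k_2}^2,$$
which is precisely the coefficient of $x^{2n}y^{2k}$ in $x^2 y^2 F(x,y)^2$. So once the generating function identity is established, we obtain $P(n,k)\,E(n,k,1) = \tfrac{1}{2}\binom{2n}{2k-1}$, the third of the three equivalent closed forms in the statement. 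The other two forms follow by elementary manipulation using $(2n)(2n-2)\cdots(2n-2k+2) = 2^k P(n,k)$ together with the identity $(2k-1)! = 2^{k-1}(k-1)!\,(2k-1)!!$.

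To evaluate $F(x,y)$ in closed form, I would begin from $\sum_{k}\binom{n}{k}^2 Y^k = [z^0](1+z)^n(1+Y/z)^n$, substitute $z\mapsto\sqrt{Y}\,z$ so that the product symmetrizes to $(1+Y+\sqrt{Y}(z+z^{-1}))^n$, sum the resulting geometric series in $n$, and extract the constant term in $z$ using $\sum_m\binom{2m}{m}t^m=(1-4t)^{-1/2}$. This yields the classical closed form
$$\sum_{m,j\geq 0}\binom{m}{j}^2 X^m Y^j \;=\; \frac{1}{\sqrt{(1-X(1+Y))^2 - 4X^2 Y}}.$$
Substituting $X=x^2$ and $Y=y^2$, and then factoring the radicand as a difference of squares, $(1-x^2(1+y^2))^2 - 4x^4 y^2 = (1-x^2(1+y)^2)(1-x^2(1-y)^2)$, gives
$$x^2 y^2 F(x,y)^2 \;=\; \frac{x^2 y^2}{\bigl(1-x^2(1+y)^2\bigr)\bigl(1-x^2(1-y)^2\bigr)}.$$

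For $G(x,y)$, I would split $(1\pm y)^{2n}$ into even and odd parts to obtain
$$\sum_{k\geq 1}\binom{2n}{2k-1} y^{2k} \;=\; \frac{y}{2}\bigl[(1+y)^{2n} - (1-y)^{2n}\bigr],$$
and then sum over $n$ via two geometric series to get
$$G(x,y) \;=\; \frac{y}{4}\left(\frac{1}{1-x^2(1+y)^2} - \frac{1}{1-x^2(1-y)^2}\right).$$
Combining these fractions over the common denominator $\bigl(1-x^2(1+y)^2\bigr)\bigl(1-x^2(1-y)^2\bigr)$ and using the identity $(1+y)^2-(1-y)^2=4y$ produces exactly the right-hand side of the displayed expression for $x^2 y^2 F(x,y)^2$. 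This confirms the generating function identity and completes the argument.

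The main obstacle is the closed form for $F(x,y)$; the derivation for $G(x,y)$ and the final comparison are routine once both sides are rational functions. If the Legendre-type identity above is not taken as standard, the constant-term extraction sketched for $F$ must be spelled out carefully to be self-contained — the symmetrization substitution is the non-obvious step. Everything after that is bookkeeping, including the conversion between the three equivalent numerical forms.
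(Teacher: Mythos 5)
Your proposal is correct and follows the same overall skeleton as the paper's proof: reduce the claim via Lemma~\ref{lem:E(n,k,r)_sum} to the generating function identity $x^2y^2F(x,y)^2=G(x,y)$, compute closed forms for $F$ and $G$, and compare. The treatment of $G$ (splitting $(1\pm y)^{2n}$ into parities and summing two geometric series) is identical to the paper's. Where you genuinely diverge is the closed form for $F$: the paper expands $\binom{n}{k}^2$ via Pascal's identity to get a functional equation for $F$ whose inhomogeneous term is resolved by differentiating the known Narayana-number generating function, whereas you obtain
\[
\sum_{m,j\ge 0}\binom{m}{j}^2X^mY^j=\frac{1}{\sqrt{(1-X(1+Y))^2-4X^2Y}}
\]
directly by constant-term extraction after the symmetrizing substitution $z\mapsto\sqrt{Y}z$, using only $\sum_m\binom{2m}{m}t^m=(1-4t)^{-1/2}$. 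Your route is more self-contained (it does not import the Narayana generating function and avoids the derivative computation) and the final matching is cleaner because you keep the radicand in the factored form $(1-x^2(1+y)^2)(1-x^2(1-y)^2)$, which is exactly the denominator that falls out of the partial-fraction form of $G$; the paper instead verifies the same identity in the equivalent difference-of-squares form $(1-x^2-x^2y^2)^2-(2x^2y)^2$. The only thing to watch, as you note, is that the constant-term manipulation (commuting $[z^0]$ with the sum over $n$, and the fact that only even powers of $\sqrt{Y}$ survive) should be spelled out; these are routine and do not constitute gaps. The deduction of the three equivalent numerical forms from $P(n,k)E(n,k,1)=\tfrac12\binom{2n}{2k-1}$ is the same elementary bookkeeping the paper performs.
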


\begin{proof}
First we show that all these claims are equivalent. Multiplying out the factorials and canceling common terms, we get
\begin{align*}
    \frac{2^{k-1}}{(2k-1)!!}\binom{n-\frac{1}{2}}{k-1} &= \frac{2^{k-1}}{(2k-1)(2k-3)\dots3\cdot1}\cdot\frac{(n-\frac{1}{2})(n-\frac{3}{2})\dots(n-k+\frac{3}{2})}{(k-1)(k-2)\dots2\cdot1} \\
    &= \frac{2^{k-1}}{(2k-1)(2k-3)\dots3\cdot1}\cdot\frac{(2n-1)(2n-3)\dots(2n-2k+3)}{(2k-2)(2k-4)\dots4\cdot2} \\
    &= \frac{2^{k-1}(2n-1)(2n-3)\dots(2n-2k+3)}{(2k-1)!} \\
    &= \frac{2n(2n-2)\dots(2n-2k+2)}{2\cdot n(n-1)\dots(n-k+1)}\cdot\frac{(2n-1)(2n-3)\dots(2n-2k+3)}{(2k-1)!} \\
    &= \frac{1}{2P(n,k)}\binom{2n}{2k-1}.
\end{align*}
This shows that the first three expressions are equivalent. Thus, we will prove the third expression. By Lemma~\ref{lem:E(n,k,r)_sum}, the desired statement can be rewritten as $$\sum_{n_1+n_2=n-1}\ \ \sum_{k_1+k_2=k-1} \binom{n_1}{k_1}^2\binom{n_2}{k_2}^2 = \frac{1}{2}\binom{2n}{2k-1}.$$ Rewriting this in terms of the generating functions from Definition~\ref{def:gen_funs}, we can see that this is equivalent to (\ref{eqn:gen_funs}). To show (\ref{eqn:gen_funs}), we will compute closed forms for $F(x,y)$ and $G(x,y)$, and show that they match. First, we compute a closed form for $G(x,y)$ by writing it as a difference of two geometric series and then putting the resulting two fractions over a common denominator:
\begin{align*}
    G(x,y) &= \frac{y}{2}\sum_{n\ge 0} x^{2n} \sum_{k=0}^n \binom{2n}{2k-1} y^{2k-1} \\
    &= \frac{y}{2}\sum_{n\ge 0} x^{2n}\cdot\frac{(y+1)^{2n} - (y-1)^{2n}}{2} \\
    &= \frac{y}{4}\left(\frac{1}{1-x^2(y+1)^2} - \frac{1}{1-x^2(y-1)^2}\right) \\
    &= \frac{y}{4} \left(\frac{1}{1-x^2-x^2y^2 - 2x^2y} - \frac{1}{1-x^2-x^2y^2 + 2x^2y}\right) \\
    &= \frac{y}{4}\cdot\frac{4x^2y}{(1-x^2-x^2y^2)^2 - (2x^2y^2)^2} \\
    &= \frac{x^2y^2}{(1-x^2-x^2y^2)^2 - (2x^2y^2)^2}.
\end{align*}
It remains to compute a closed form for $F(x,y)$. By Pascal's identity, for all $n\ge 1$ we have $$\binom{n}{k}^2 = \left(\binom{n-1}{k-1}+\binom{n-1}{k}\right)^2 = \binom{n-1}{k-1}^2 + 2\binom{n-1}{k-1}\binom{n-1}{k} + \binom{n-1}{k}^2,$$ so we get
\begin{align*}
    F(x,y) &= 1 + \sum_{n,k\ge 1}\binom{n-1}{k-1}^2 x^{2n}y^{2k} + \sum_{n,k\ge 1} \binom{n-1}{k}^2x^{2n}y^{2k} + 2\sum_{n\ge 1,k\ge 0}\binom{n-1}{k-1}\binom{n-1}{k}x^{2n}y^{2k} \\
    &= 1 + \sum_{n,k\ge 0}\binom{n}{k}^2 x^{2n+2}y^{2k+2} + \sum_{n,k\ge 0}\binom{n}{k}^2 x^{2n+2}y^{2k} + 2\sum_{n\ge 1,k\ge 0}\binom{n-1}{k-1}\binom{n-1}{k}x^{2n}y^{2k} \\
    & = 1 + (x^2y^2 + x^2)F(x,y) + 2\sum_{n\ge 1,k\ge 0}\binom{n-1}{k-1}\binom{n-1}{k}x^{2n}y^{2k}.
\end{align*}
Rearranging gives
\begin{equation}
    (1-x^2-x^2y^2)F(x,y) = 1 + 2x^3y^2\sum_{n\ge 1,k\ge 0}\binom{n}{k-1}\binom{n}{k}x^{2n-1}y^{2k-2}.\label{eqn:F_narayana}
\end{equation}
The generating function for the Narayana numbers (see \cite{Petersen}) is known to be $$\sum_{n\ge 1,k\ge 0}\frac{1}{n}\binom{n}{k-1}\binom{n}{k}x^{2n}y^{2k-2} = \frac{1-x^2-x^2y^2+\sqrt{1-2x^2(1+y^2) + x^4(1-y^2)^2}}{2x^2y^2}.$$ Multiplying by $y^2$ and taking the derivative with respect to $x$ gives
\begin{align*}
    2y^2\sum_{n\ge 1,k\ge 0}\binom{n}{k-1}\binom{n}{k}x^{2n-1}y^{2k-2} &= -\frac{1}{x^3} + \frac{d}{dx}\sqrt{\frac{1}{4x^4}-\frac{1+y^2}{2x^2}+(1-y)^2} \\
    &= -\frac{1}{x^3} + \frac{-\frac{1}{x^5} + \frac{1+y^2}{x^3}}{2\sqrt{\frac{1}{4x^4}-\frac{1+y^2}{2x^2}+\frac{(1-y)^2}{4}}} \\
    &= -\frac{1}{x^3}\left(1 - \frac{1-x^2-x^2y^2}{\sqrt{1 - 2x^2(1+y^2) + x^4(1-y^2)^2}}\right).
\end{align*}
Plugging this into (\ref{eqn:F_narayana}) gives
$$(1-x^2-x^2y^2)F(x,y) = 1 -x^3\cdot \frac{1}{x^3}\left(1 - \frac{1-x^2-x^2y^2}{\sqrt{1 - 2x^2(1+y^2) + x^4(2-y^2)^2}}\right),$$ so we have
$$F(x,y) = \frac{1}{\sqrt{1 - 2x^2(1+y^2) + x^4(1-y^2)^2}}$$ and thus
\begin{align*}
    x^2y^2F(x,y)^2 &= \frac{x^2y^2}{1-2x^2(1+y^2) + x^4(1-y^2)^2} \\
    &= \frac{x^2y^2}{1-2x^2-2x^2y^2 + x^4y^4 - 2x^4y^2 + x^4} \\
    &= \frac{x^2y^2}{(1-x^2-x^2y^2)^2 - (2x^2y)^2} \\
    &= G(x,y),
\end{align*}
which completes the proof.
\end{proof}

Now for our third part of our proof, we will define an additional function $E'(n,k,r)$ and simultaneously prove formulas for both $E(n,k,r)$ and $E'(n,k,r)$ by induction on $n$.

\begin{definition}\label{def:E'(n,k,r)}
Let $E'(n,k,r)$ denote the sum $$E'(n,k,r) = \frac{1}{P(n,k)}\sum_{i,j\ge 1} P(i \ j)E(i,j,r-2)\cdot\frac{1}{2}\binom{2(n-i-1)}{2(k-j-1)}.$$
\end{definition}

\begin{claim}\label{claim:E_and_E'}
$E(n,k,r)$ and $E'(n,k,r)$ are given by the formulas
\begin{align*}
    E(n,k,r) &= \frac{2^{k-r}}{(r-1)!!(2k-r)!!}\binom{n-\frac{r}{2}}{k-r}, \\
    E'(n,k,r) &= \frac{2^{k-r}}{(r-1)!!(2k-r)!!}\binom{n-\frac{r}{2}}{k+1-r}.
\end{align*}
\end{claim}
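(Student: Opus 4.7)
The plan is to prove both formulas by strong induction on $n$, with the $r=1$ formula from Claim~\ref{claim:E(n,k,1)} serving as the central building block. First, I would handle the base cases: for $n$ too small the binomial coefficients vanish on both sides and match trivially; for $r=0$ the formula reduces to the known $E(n,k,0)=\binom{n}{k}/k!$ after simplifying $(2k)!!=2^k k!$ and adopting $(-1)!!:=1$; and for $r=1$ the formula is exactly Claim~\ref{claim:E(n,k,1)}. The analogous base cases for $E'$ follow by directly substituting these formulas into its defining sum and checking the result.

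For the inductive step at $r\ge 2$, the idea is to derive a recursion for $E(n,k,r)$ that mirrors the definition of $E'$. Starting from Lemma~\ref{lem:E(n,k,r)_sum}, I would group the last two factors $\binom{n_r}{k_r}^2\binom{n_{r+1}}{k_{r+1}}^2$ in the product and apply Claim~\ref{claim:E(n,k,1)}, which collapses
\[
\sum_{n_r+n_{r+1}=a-1}\,\sum_{k_r+k_{r+1}=b-1}\binom{n_r}{k_r}^2\binom{n_{r+1}}{k_{r+1}}^2=\tfrac{1}{2}\binom{2a}{2b-1}.
\]
This yields
\[
P(n,k)\,E(n,k,r)=\sum_{i,j}\tfrac{1}{2}\binom{2(n-i-1)}{2(k-j-1)-1}\,P(i,j)\,E(i,j,r-2),
\]
which involves only $E(i,j,r-2)$ with $i\le n-2$, accessible by the inductive hypothesis. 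The defining sum of $E'(n,k,r)$ has the same shape but with $\binom{2(n-i-1)}{2(k-j-1)}$ in place.

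Then I would substitute the inductive formula for $E(i,j,r-2)$ into both sums. The factor of $P(i,j)$ cancels cleanly with the denominator in $\binom{i-(r-2)/2}{j-r+2}$, reducing each expression to a convolution over $(i,j)$ of two binomial coefficients. A Chu--Vandermonde-type identity should then collapse both convolutions, yielding $\binom{n-r/2}{k-r}$ for $E(n,k,r)$ and $\binom{n-r/2}{k+1-r}$ for $E'(n,k,r)$, with matching prefactors once the double factorials are tracked.

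The main obstacle will be executing the Chu--Vandermonde collapse cleanly, especially because $i-(r-2)/2$ is a half-integer when $r$ is odd and because the two binomial coefficients in the convolution have mismatched scales (one has $i$ as upper index, the other has $2(n-i-1)$). I plan to handle this by rewriting the half-integer binomial as an integer factorial divided by appropriate double factorials — using the same style of manipulation that appeared in the $r=1$ derivation — which recasts the sum in a form where Vandermonde's convolution applies directly. The parallel treatment of the $E$ and $E'$ sums, differing only in the shift of the lower binomial index by one, should yield the matching shift from $\binom{n-r/2}{k-r}$ to $\binom{n-r/2}{k+1-r}$ in the closed-form answer.
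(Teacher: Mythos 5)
Your setup is sound and matches the paper's up to the key juncture: grouping the last two factors in Lemma~\ref{lem:E(n,k,r)_sum} and applying Claim~\ref{claim:E(n,k,1)} does yield
\[
P(n,k)E(n,k,r)=\sum_{i,j}P(i,j)E(i,j,r-2)\cdot\frac{1}{2}\binom{2(n-i-1)}{2(k-j-1)-1},
\]
which is exactly the convolution the paper derives (there, via generating functions), and the defining sum of $E'$ differs only by the shift in the lower index. The gap is in how you propose to evaluate these convolutions. After substituting the closed form for $E(i,j,r-2)$, the inner sum over $j$ has the shape
\[
\sum_j c_j\binom{i-\frac{r-2}{2}}{j-r+2}\binom{2(n-i-1)}{2(k-j-1)-1},
\]
where the lower index of the first binomial moves in steps of $1$ in $j$ while that of the second moves in steps of $2$, and where $c_j$ carries genuinely $j$-dependent factors such as $2^{j}/(2j-r+2)!!$ together with the mismatch between $P(i,j)=i!/(i-j)!$ and the denominator $(j-r+2)!$ of the half-integer binomial (these do not ``cancel cleanly''). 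No version of Chu--Vandermonde applies to a convolution with mismatched step sizes and extra rational weights; already for $r=2$ the required identity is $\sum_{i,j}\binom{i}{j}^2\cdot\frac{1}{2}\binom{2(n-i-1)}{2(k-j-1)-1}=\frac{P(n,k)}{2(k-1)!}\binom{n-1}{k-2}$, which is of the same order of difficulty as Claim~\ref{claim:E(n,k,1)} itself (whose proof needs a page of Narayana-number generating-function work). So ``a Chu--Vandermonde-type identity should then collapse both convolutions'' is precisely the unproved step, and it is the entire content of the claim.

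The paper's proof is engineered to sidestep this evaluation: it applies Pascal's identity twice to $\binom{2(n-i-1)}{2(k-j-1)-1}$ \emph{inside} the convolution, which converts the identity into a coupled three-term recursion in $n$,
\[
nE(n,k,r)=E(n-1,k-1,r)+(n-k)E(n-1,k,r)+2E'(n-1,k-1,r),
\]
together with a companion recursion for $E'$ --- this is the real reason $E'$ has to be carried along at all. Verifying that the closed forms satisfy these recursions then requires only Pascal's identity applied to $\binom{n-r/2}{k-r}$, which is routine, and the induction runs on $n$ rather than on $r$. If you want to keep your induction on $r$, you must actually locate and prove the hypergeometric identity your plan needs (it is not a standard one); otherwise you should adopt the recursion-in-$n$ device. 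Note also that the same gap recurs in your base cases for $E'$ at $r=2,3$, since those already require evaluating a convolution of the same difficult type.
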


\begin{proof}
We will prove both formulas simultaneously by induction on $n$, and we will also assume in the inductive step that the formulas are known to hold for smaller values of $k$ and $r$. Our base case will cover all values of $k$ and $r$ assuming that $n$ is minimal given $k$ and $r$.

We will first find recursive formulas expressing each of $E(n,k,r)$ and $E'(n,k,r)$ in terms of smaller values ((\ref{eqn:E(n,k,r)_recur}) and (\ref{eqn:E'(n,k,r)_recur})). Using our generating functions from Definition~\ref{def:gen_funs} and equation (\ref{eqn:gen_funs}) relating them,
\begin{align*}
    \sum_{n,k\ge 0}P(n,k)E(n,k,r)x^{2n}y^{2k} &= F(x,y)\sum_{n,k\ge 0}P(n,k)E(n,k,r-2)x^{2n}y^{2k} \\
    &= x^2y^2G(x,y)\sum_{n,k\ge 0}P(n,k)E(n,k,r-2)x^{2n}y^{2k}.
\end{align*}
Since $G(x,y) = \sum_{n,k\ge 0}\frac{1}{2}\binom{2n}{2k-1}x^{2n}y^{2k},$ this can be rewritten as $$P(n,k)E(n,k,r) = \sum_{i,j\ge 1} P(i \ j)E(i,j,r-2)\cdot\frac{1}{2}\binom{2(n-i-1)}{2(k-j-1)-1}.$$ Using Pascal's identity twice, 
\begin{align*}
    \binom{2n-2i-2}{2k-2j-3} &= \binom{2n-2i-3}{2k-2j-4} + \binom{2n-2i-3}{2k-2j-2} \\ & = \binom{2n-2i-4}{2k-2j-5} + 2\binom{2n-2i-4}{2k-2j-4} + \binom{2n-2i-4}{2k-2j-3}.
\end{align*}
Thus, our right hand side splits into
\begin{align*}
    P(n,k)E(n,k,r) &= \sum_{i,j\ge 1}P(i \ j)E(i,j,r-2)\cdot\frac{1}{2}\binom{2(n-i-2)}{2(k-j-2)-1} \\
    &+ \sum_{i,j\ge 1}P(i \ j)E(i,j,r-2)\cdot\frac{1}{2}\binom{2(n-i-2)}{2(k-j-2)} \\
    &+ 2\sum_{i,j\ge 1}P(i \ j)E(i,j,r-2)\cdot\frac{1}{2}\binom{2(n-i-2)}{2(k-j-1)}.
\end{align*}
From the recurrence for $E(n,k,r)$ and the definition of $E'(n,k,r)$, we can rewrite this as $$P(n, k) E(n,k, r) = P(n-1,k-1)E(n-1,k-1,r) + P(n-1,k)E(n-1,k,r) + 2P(n-1,k-1)E'(n-1,k-1,r).$$ Dividing by $P(n-1,k-1)$ on both sides, we get \begin{equation}\label{eqn:E(n,k,r)_recur}
    nE(n,k,r) = E(n-1,k-1,r) + (n-k)E(n-1,k,r) + 2E'(n-1,k-1,r).
\end{equation} We can use a similar argument to get a recurrence for $E'(n,k,r).$ We have to take out the term with a $\binom{0}{0}$ in it, since we cannot split $\binom{0}{0}$ using Pascal's identity. We get
\begin{align*}
    P(n,k)E(n,k,r) &= P(n-1,k-1)E(n-1,k-1,r-2)\cdot\frac{1}{2}\binom{0}{0} \\
    &+\sum_{i,j\ge 1}P(i \ j)E(i,j,r-2)\cdot\frac{1}{2}\binom{2(n-i-2)}{2(k-j-2)} \\
    &+ \sum_{i,j\ge 1}P(i \ j)E(i,j,r-2)\cdot\frac{1}{2}\binom{2(n-i-2)}{2(k-j-1)} \\
    &+ 2\sum_{i,j\ge 1}P(i \ j)E(i,j,r-2)\cdot\frac{1}{2}\binom{2(n-i-2)}{2(k-j-1)-1}.
\end{align*}
Applying the recurrence to each of these sums and then dividing by $P(n-1,k-1),$ we get the recurrence
\begin{equation}\label{eqn:E'(n,k,r)_recur}
    nE'(n,k,r) = \frac{1}{2}E(n-1,k-1,r-2) + E'(n-1,k-1,r) + (n-k)E'(n-1,k,r) + 2(n-k)E(n-1,k,r).
\end{equation} We can now proceed with the induction:\\
\\
\tb{Base case:} For our base case, the smallest that $n$ can be for a given $k$ to make $E(n,k,r)$ nonzero is if $n=k$, since otherwise there cannot be any increasing subsequences of length $k$. In this case, we get $E(k,k,r) = \frac{1}{k!}\binom{k}{r}.$ The only way to have an increasing subsequence of length $k$ when $n=k$ is if the permutation is the identity (which happens with probability $\frac{1}{k!}$). In that case every subset of size $r$ is a set of $r$ fixed points contained in the subsequence, and there are $\binom{k}{r}$ such subsets. To check that our formula gives the same result in this case, we can plug in $n=k$ to get
\begin{align*}
    \frac{2^{k-r}(k-\frac{r}{2})\dots(\frac{r}{2}+1)}{(r-1)!!(2k-r)!!(k-r)!} &= \frac{(2k-r)(2k-r-2)\dots(r+2)}{[(r-1)(r-3)\dots]\cdot[(2k-r)\dots(r+2)r(r-2)\dots]\cdot(k-r)!} \\
    &= \frac{1}{r!(k-r)!} = \frac{1}{k!}\binom{k}{r} = E(k,k,r).
\end{align*}
By the above calculation and (\ref{eqn:E(n,k,r)_recur}) together with Pascal's identity, we get
\begin{align*}
    E'(k,k,r) &= \frac{(k+1)E(k+1,k+1,r) - E(k,k,r)}{2} \\
    &= \frac{(k+1)\cdot\frac{1}{(k+1)!}\binom{k+1}{r} -\frac{1}{k!}\binom{k}{r}}{2}  = \frac{\frac{1}{k!}\binom{k}{r-1}}{2} = \frac{1}{2\cdot (r-1)!(k+1-r)!} \\
    &= \frac{1}{[(r-1)(r-3)\dots]\cdot[(2k-r)(2k-r-2)\dots]}\cdot\frac{(2k-r)(2k-r-2)\dots}{2\cdot[(r-2)(r-4)\dots]\cdot(k+1-r)!} \\
    &= \frac{1}{[(r-1)(r-3)\dots]\cdot[(2k-r)(2k-r-2)\dots]}\cdot\frac{2^{k-\frac{r}{2}}(k-\frac{r}{2})(k-\frac{r}{2}-1)\dots}{2\cdot[2^{\frac{r}{2}-1}(\frac{r}{2}-1)(\frac{r}{2}-2)\dots]\cdot(k+1-r)!}\\
    &= \frac{2^{k-r}}{(r-1)!!(2k-r)!!}\binom{k-\frac{r}{2}}{k+1-r}.
\end{align*}
This completes the base case. \\
\\
\tb{Inductive step:} For the inductive hypothesis, assume both formulas hold when $n$ is strictly smaller and $k$ and $r$ are weakly smaller than their current values. First we will prove that the formula for $E(n,k,r)$ holds. By the inductive hypothesis, we can plug our formulas into (\ref{eqn:E(n,k,r)_recur}) to give
\begin{align*}
    nE(n,k,r) &= E(n-1, k-1, r) + (n-k)E(n-1,k,r) + 2E'(n-1,k-1,r) \\
    &= \frac{2^{k-r-1}\binom{n-1-\frac{r}{2}}{k-1-r}}{(r-1)!!(2k-2-r)!!} + \frac{2^{k-r}(n-k)\binom{n-1-\frac{r}{2}}{k-r}}{(r-1)!!(2k-r)!!} + \frac{2^{k-r-1}\cdot2\binom{n-1-\frac{r}{2}}{k-r}}{(r-1)!!(2k-r)!!}\\
    &=\frac{2^{k-r-1}\left((2k-r)\binom{n-1-\frac{r}{2}}{k-1-r}+2(n-k)\binom{n-1-\frac{r}{2}}{k-r}+2(2k-r)\binom{n-1-\frac{r}{2}}{k-r}\right)}{(r-1)!!(2k-r)!!}.
\end{align*}
Rearranging and using Pascal's identity, we get
\begin{align*}
    nE(n,k,r) &=\frac{2^{k-r-1}(2n\binom{n-1- \frac{r}{2}}{k-r} + (2k-r)\binom{n-1-\frac{r}{2}}{k-1-r} + 2(k-r)\binom{n-1-\frac{r}{2}}{k-r})}{(r-1)!!(2k-r)!!} \\
    &= \frac{2^{k-r}\left(n\binom{n-1-\frac{r}{2}}{k-r} + (k-\frac{r}{2})\binom{n-1-\frac{r}{2}}{k-1-\frac{r}{2}}+(n-k+\frac{r}{2})\binom{n-1-\frac{r}{2}}{k-1-r}\right)}{(r-1)!!(2k-r)!!} \\
    &= \frac{2^{k-r}\left(n\binom{n-1-\frac{r}{2}}{k-r} + n\binom{n-1-\frac{r}{2}}{k-1-r}\right)}{(r-1)!!(2k-r)!!} \\
    &= \frac{2^{k-r}\cdot n}{(r-1)!!(2k-r)!!}\binom{n-\frac{r}{2}}{k-r}.
\end{align*}
Dividing by $n$ shows that the formula for $E(n,k,r)$ holds for the current value of $n$.

It remains to show prove that the formula for $E'(n,k,r)$ holds as well. Using the inductive hypothesis to plug our formulas into (\ref{eqn:E'(n,k,r)_recur}), we get
\begin{align*}
    nE'(n,k,r) &= \frac{1}{2}E(n-1,k-1,r-2) + E'(n-1,k-1,r) + (n-k)E'(n-1,k,r) + 2(n-k)E(n-1,k,r) \\
    &= \frac{\frac{1}{2}\cdot2^{k-r+1}\binom{n-\frac{r}{2}}{k+1-r}}{(r-3)!!(2k-r)!!} + \frac{2^{k-r-1}\binom{n-1-\frac{r}{2}}{k-r}}{(r-1)!!(2k-2-r)!!}+\frac{2^{k-r}(n-k)\binom{n-1-\frac{r}{2}}{k+1-r}}{(r-1)!!(2k-r)!!} + \frac{2^{k-r}\cdot2(n-k)\binom{n-1-\frac{r}{2}}{k-r}}{(r-1)!!(2k-r)!!}\\
    &=\frac{2^{k-r}\left((r-1)\binom{n-\frac{r}{2}}{k+1-r} + (k-\frac{r}{2})\binom{n-1-\frac{r}{2}}{k-r}+(n-k)\binom{n-1-\frac{r}{2}}{k+1-r}+2(n-k)\binom{n-1-\frac{r}{2}}{k-r}\right)}{(r-1)!!(2k-r)!!}.
\end{align*}
Again, rearranging and using Pascal's identity, we get
\begin{align*}
    nE'(n,k,r) &=
    \frac{2^{k-r}\left((r-1)\binom{n-\frac{r}{2}}{k+1-r} + \left((n-r+1)\binom{n-1-\frac{r}{2}}{k-r}+(n-r+1)\binom{n-1-\frac{r}{2}}{k+1-r}\right)\right)}{(r-1)!!(2k-r)!!} \\
    &- \frac{2^{k-r}\left((k+1-r)\binom{n-1-\frac{r}{2}}{k+1-r} - (n-k+\frac{r}{2}-1)\binom{n-1-\frac{r}{2}}{k-r} \right)}{(r-1)!!(2k-r)!!} \\
    &= \frac{2^{k-r}\left((r-1)\binom{n-\frac{r}{2}}{k+1-r} + (n-r+1)\binom{n-\frac{r}{2}}{k+1-r}\right)}{(r-1)!!(2k-r)!!}\\
    &= \frac{2^{k-r}\cdot n}{(r-1)!!(2k-r)!!}\binom{n-\frac{r}{2}}{k+1-r}.
\end{align*}
Dividing by $n$ shows that $E'(n,k,r)$ matches the claimed formula, which completes the proof.
\end{proof}

\end{document}